\theoremstyle{plain}
\newtheorem{thm}{Theorem}
\newtheorem{cor}[thm]{Corollary}
\newtheorem{lem}[thm]{Lemma}
\newtheorem{prop}[thm]{Proposition}
\theoremstyle{definition}
\newtheorem{remark}[thm]{Remark}
\numberwithin{thm}{section}
\numberwithin{equation}{section}
\newcommand{\EQ}[1]{\eqref{#1}}\newcommand{\re}[1]{\eqref{#1}}
\newcounter{hypo}
\DeclareMathOperator{\trace}{trace}
\DeclareMathOperator{\dist}{dist}
\DeclareMathOperator{\divg}{div}
\newcommand{\R}{\ensuremath{\mathbb{R}}}
\newcommand{\rn}{\R^n}
\newcommand{\ep}{\varepsilon}
\newcommand{\Sy}{\ensuremath{\mathcal{S}_n}}
\newcommand{\sph}{\ensuremath{{S}^{n-1}}}
\newcommand{\puccisub}[2]{\mathcal{M}^-_{#1,#2}}
\newcommand{\Puccisub}[2]{\mathcal{M}^+_{#1,#2}}
\newcommand{\PucciSub}{\Puccisub{\elp}{\Elp}}
\newcommand{\pucciSub}{\puccisub{\elp}{\Elp}}
\newcommand{\pucci}{\pucciSub}
\newcommand{\Pucci}{\PucciSub}
\newcommand{\Q}[1]{Q[ #1 ]}
\newcommand{\Elp}{\Lambda}
\newcommand{\elp}{\lambda}
\newcommand{\radfun}[1]{\xi_{#1}}
\newcommand{\al}{{\alpha^*}}
\newcommand{\tilal}{{\widetilde\alpha^*}}
\newcommand{\tilphi}{{\widetilde\Phi}}
\def\Xint#1{\mathchoice
{\XXint\displaystyle\textstyle{#1}}%
{\XXint\textstyle\scriptstyle{#1}}%
{\XXint\scriptstyle
\scriptscriptstyle{#1}}%
{\XXint\scriptscriptstyle
\scriptscriptstyle{#1}}%
\!\int}
\def\XXint#1#2#3{{
\setbox0=\hbox{$#1{#2#3}{\int}$}
\vcenter{\hbox{$#2#3$}}\kern-.5\wd0}}
\def\dashint{\Xint-}
\begin{document}
\title[Nonexistence of positive solutions of elliptic inequalities]{Nonexistence of positive supersolutions of elliptic equations via the maximum principle}
\author{Scott N. Armstrong}
\address{Department of Mathematics\\ The University of Chicago\\ 5734 S. University Avenue
Chicago, Illinois 60637.}
\email{armstrong@math.uchicago.edu}
\author{Boyan Sirakov}
\address{UFR SEGMI, Universit\'e Paris 10\\
92001 Nanterre Cedex, France \\
and CAMS, EHESS \\
54 bd Raspail \\
75270 Paris Cedex 06, France}
\email{sirakov@ehess.fr}
\date{\today}
\keywords{Liouville theorem, semilinear equation, $p$-Laplace equation, fully nonlinear equation, Lane-Emden system}
\subjclass[2000]{Primary 35B53, 35J60, 35J92, 35J47.}

\begin{abstract}
We introduce a new method for proving the nonexistence of positive supersolutions of elliptic inequalities in unbounded domains of $\R^n$.  The simplicity and robustness of our maximum principle-based argument provides for its applicability to many elliptic inequalities and systems, including quasilinear operators such as the $p$-Laplacian, and nondivergence form fully nonlinear operators such as Bellman-Isaacs operators. Our method gives new and optimal results in terms of the nonlinear functions appearing in the inequalities, and applies to inequalities holding in the whole space as well as exterior domains and cone-like domains.
\end{abstract}

\maketitle

\section{Introduction}

A well-studied problem in the theory of the elliptic partial differential equations is that of determining for which nonnegative, nonlinear functions $f=f(s,x)$ there exists a positive solution or supersolution $u>0$ of the equation
\begin{equation}\label{princ}
-Q[u] = f(u,x),
\end{equation}
in some subset of $\R^n$; here $Q$ denotes a second-order elliptic differential operator. A model case is the semilinear inequality
\begin{equation}\label{modeleq}
-\Delta u \ge f(u),
\end{equation}
where $f$ is a positive continuous function defined on $(0,\infty)$. There is a vast literature on the problem of obtaining sufficient conditions on $f$ to ensure the nonexistence of positive supersolutions  of such equations, both in $\R^n$ and in subsets of $\R^n$, which encompasses many different choices of operators $Q$ and nonlinear functions $f$.

\medskip

In this paper we introduce a new method for proving the nonexistence of supersolutions in unbounded domains. It has the advantage of being both simple and robust, allowing us to prove new and essentially optimal results for wide classes of equations and systems of equations of type \EQ{princ}. In particular, we extend many of the previous Liouville results by substantially relaxing the hypotheses on $f$ required for nonexistence. Namely, we impose only ``local" conditions on the behavior of $f(s,x)$, near $s=0$ or $s=\infty$, and for large $|x|$. Furthermore, our approach unites many previously known but seemingly disparate results by demonstrating that they follow from essentially the same argument.

Our method depends only on properties related to the maximum principle which are shared by many elliptic operators for which the solvability of \EQ{princ} has been studied. Consequently, our technique applies to inequalities in both divergence and nondivergence forms, and interpreted in the appropriate (classical, weak Sobolev, or viscosity) sense.

\medskip

To give a flavor of our results, let us consider the differential inequality \EQ{modeleq} in an exterior domain $\R^n \setminus B$, $n\ge 2$, where $B\subset \R^n$ is any ball. Under only the hypotheses that $f:(0,\infty) \to (0,\infty)$ is  continuous, as well as
\begin{equation}\label{modelf}
0 < \liminf_{s\searrow 0} s^{-n/(n-2)}f(s) \leq \infty\qquad\qquad\mbox{ if }n\ge 3,\end{equation}
\begin{equation}\label{modelf1}
\liminf_{s\to\infty} e^{as} f(s) = \infty \quad \mbox{for each} \ a > 0\qquad\mbox{ if }n=2,
\end{equation}
for each $a>0$, we show that there does not exist a positive (classical, viscosity or weak Sobolev) solution of \EQ{modeleq}. Therefore in dimensions $n\geq 3$ it is only the behavior of $f(s)$ near $s=0$ that determines whether or not supersolutions  exist, while in dimension $n=2$ it is the behavior of $f(s)$ at infinity which determines solvability.
These results are sharp and new.

Furthermore, we will see that if the inequality \EQ{modeleq} is assumed to hold only on $\mathcal{C}\setminus B$ where $\mathcal{C}$ is  a proper cone of $\rn$,  then we must make assumptions on  $f$ both at zero and at infinity in order to obtain a nonexistence result. Specifically, we exhibit exponents $\sigma^-<1<\sigma^+$ such that  \EQ{modeleq} has no positive solutions  provided that $f:(0,\infty) \to (0,\infty)$ is  continuous and
\begin{equation} \label{intro-cones}
\liminf_{s\searrow 0} s^{-\sigma^+} f(s) > 0, \quad \mbox{and} \quad \liminf_{s\to \infty} s^{-\sigma^-} f(s) > 0.
\end{equation}

It is usually thought that the most precise results for equations in divergence form like \eqref{modeleq} are obtained by exploiting their integral formulation. A notable feature of this work is that we deduce new and optimal results for such equations by a method whose main ideas-- in particular the use of the quantitative strong maximum principle (see (H3) and Theorem~\ref{qsmp} below)-- originate primarily from the theory of elliptic equations in nondivergence form.

We now give a rough list of the properties we assume the operator $Q$ possesses, and on which our method relies: \begin{enumerate}
\item[(H1)] $Q$ satisfies a weak comparison principle;
\item[(H2)] the equations $-Q[\Phi] = 0$ and $-Q[\tilde \Phi]= 0$ have solutions in $\R^n\setminus \{0\}$ which are asymptotically homogeneous and positive (resp. negative) at infinity. Usually $\Phi$ and $\tilde \Phi$ are the \emph{fundamental solutions} of $Q$;
\item[(H3)] nonnegative solutions of $-Q[u]\ge h(x)\ge 0$ have a lower bound (on compact subsets of the underlying domain) in terms of the measure of a set on which $h$ is greater than a positive constant;
\item[(H4)] nonnegative solutions of $-Q[u]\ge 0$ satisfy a weak Harnack inequality, or at least a ``very weak" Harnack inequality; and
\item[(H5)] the operator $Q$ possesses some homogeneity.
\end{enumerate}
Specific details on these hypotheses and on some operators which satisfy them are given in Section \ref{operators}. These properties are verified for instance by quasilinear operators of $p$-Laplacian type with solutions interpreted in the weak Sobolev sense, and by fully nonlinear Isaacs operators with solutions interpreted in the viscosity sense.

\medskip

We now make the following deliberately vague assertion:

\medskip

{\it Suppose $Q$ has the properties (H1)-(H5) above, and the behavior of $f(s,x)$ near $s=0$ and/or $s=\infty$ compares appropriately with that of the functions $\Phi$ and $\tilde\Phi$ for large $|x|$. Then there does not exist a positive solution of the inequality \re{princ} on any exterior domain in $\rn$.}

We prove a very general (and rigorous) version of this assertion in Section~\ref{liouville}, see Theorem~\ref{FNL}. The above statement is optimal in the sense that if a model nonlinearity $f$ does not satisfy its hypotheses, then \re{princ} has positive supersolutions.

Obviously a nonexistence result in exterior domains implies nonexistence in $\R^n$ as well as the absence of singular supersolutions in $\R^n$ with arbitrary singularities in a bounded set. Another advantage of the technique we introduce here is that it applies very easily to \emph{systems} of inequalities in unbounded domains.

Let us now give a brief account of the previous results on the subject. Due to the large number of works in the linear and quasilinear settings, we make no attempt to create an exhaustive bibliography here. Much more complete accounts can be found in the book of Veron \cite{V} and the survey articles of Mitidieri and Pohozaev \cite{MP} and Kondratiev, Liskevich, and Sobol \cite{KLS}. Gidas \cite{G} gave a simple proof of the fact that the equation $-\Delta u = u^\sigma$ has no solutions in $\R^n$, provided $\sigma\le n/(n-2)$. Condition \EQ{modelf} appeared first in Ni and Serrin \cite{NS}, where the nonexistence of {\it decaying} radial solutions to some quasilinear inequalities like $-\Delta_p u\ge |x|^{-\gamma}u^\sigma$ in $\R^n$ for $\sigma\le (n-\gamma)(p-1)/(n-p)$ was proved. In two important papers,  Bidaut-Veron \cite{B1} and Bidaut-Veron and Pohozaev \cite{BVP} extended these results by dropping the restrictions on the behavior of a supersolution $u$ and by showing that the same results hold in exterior domains of $\rn$. For more nonexistence results for positive solutions of quasilinear inequalities with pure power right-hand sides, we refer to  Serrin and Zou \cite{SZ2}, Liskevich, Skrypnik, and Skrypnik \cite{LSS04}. Liouville-type results for semilinear inequalities in nondivergence form can be found in the work by Kondratiev, Liskevich, and Sobol \cite{KLS2}.
Extensions to quasilinear inequalities in conical domains have been studied for instance by Bandle and Levine \cite{BaL}, Bandle and Essen \cite{Bae}, Berestycki, Capuzzo-Dolcetta, and Nirenberg \cite{BCN}, and Kondratiev, Liskevich, and Moroz \cite{KLM}.

Fully nonlinear inequalities of the form $F(D^2u) \ge u^\sigma$, where $F$ is an Isaacs operator, were first studied by Cutri and Leoni \cite{CL}, and later by Felmer and Quaas \cite{FQ}, in the case of a rotationally invariant $F$ and a solution in the whole space (see also Capuzzo-Dolcetta and Cutri \cite{CDC}). These results were recently extended in \cite{AS}, by a different method, to arbitrary Isaacs operators and to exterior domains. In particular,  the inequality $F(D^2u) \ge u^\sigma$ has no positive solutions in any exterior domain in $\R^n$, provided that $\sigma\le (\alpha^*+2)/\alpha^*$ (or $\alpha^* \leq 0$), where $\alpha^*= \alpha^*(F)$ characterizes the homogeneity of the upward-pointing fundamental solution of the operator $F$ (as found in \cite{ASS}).

As far as systems of inequalities are concerned, Liouville results were obtained by Mitidieri \cite{M}, Serrin and Zou \cite{SZ1}, for the case of a whole space, Bidaut-Veron \cite{B2} for quasilinear systems in exterior domains, Birindelli and Mitidieri \cite{BM}, Laptev \cite{La} for systems in cones, and Quaas and Sirakov \cite{QS3} for fully nonlinear systems in the whole space. For elliptic systems, the literature is more sparse and concerns only systems with pure power right-hand sides such as the Lane-Emden system  $-\Delta u = v^\sigma$, $-\Delta v = u^\rho$.

Despite the great variety of approaches and methods, most of the previous results required a global hypothesis on the function $f$, namely that $f$ be a power function or a combination of power functions. A notable exception is the very recent work of D'Ambrosio and Mitidieri \cite{dAM}, who obtained various nonexistence results for  divergence-form quasilinear inequalities in the whole space with only a local hypothesis on the function $f(s)$ near $s=0$, as in \EQ{modelf}. Their method is based on sophisticated integral inequalities and requires that the inequality holds in the whole space.

Finally, we note that there is a large literature concerning Liouville results for solutions (not supersolutions) of equations of the form $-Q[u]=f(u)$ in $\R^n$, which started with the well-known work by Gidas and Spruck \cite{GS}. For instance, it is known that $-\Delta u =f(u)$ in $\R^n$ has no classical positive solutions provided $s^{-\frac{n+2}{n-2}}f(s)$ is an increasing function on $(0,\infty)$; see \cite{LZ} and the references therein. These deep and important results are  quite delicate, with the nonexistence range $(\frac{n}{n-2},\frac{n+2}{n-2})$ depending on the conformal invariance of the Laplacian, on the precise behavior of $f$ on the whole interval $(0,\infty)$, on the differential equality being verified in the whole space, as well as  on the solutions being classical.
\medskip

This paper is organized as follows. In Section \ref{sectwo} we present the main ideas by proving the Liouville result we stated  above in  the simple particular case of \EQ{modeleq} and $n\ge 3$. We collect some preliminary observations in Section \ref{operators}, including a precise list of the properties (H1)-(H5) above as well as some estimates for the minima of positive supersolutions of $-Q[u]\ge 0$ over annuli. Our main results for scalar equations in exterior domains are presented in Section~\ref{liouville}. We extend the results for equation \EQ{modeleq} to conical domains in Section \ref{cones}.  We conclude in Section~\ref{systems} with applications of our method to systems of inequalities.


\section{A simple semilinear inequality} \label{sectwo}
In this section, we illustrate our main ideas on the semilinear inequality
\begin{equation} \label{slinxs}
-\Delta u \ge f(u)
\end{equation}
in exterior domains in dimension $n\geq 3$, and under the assumption that the nonlinearity $f=f(s)$ is positive and continuous on $(0,\infty)$. We will show that the additional hypothesis
\begin{equation} \label{fpow}
\liminf_{s\searrow 0} s^{-n/(n-2)}f(s) >0
\end{equation}
implies that the inequality \EQ{slinxs} has no positive solution in any exterior domain. Notice that we impose no requirements on the behavior of $f(s)$ away from $s=0$, apart from continuity and positivity. In particular, $f$ may have arbitrary decay at infinity.

It is easily checked that for $q > n/(n-2)$, the function
$u(x) = c \left( 1 + |x|^2 \right)^{-1/(q-1)}$
is a smooth supersolution of $-\Delta u = u^q$ in $\R^n$, for each sufficiently small $c>0$. Moreover, the function $\tilde u (x) : = c_q |x|^{-2/(q-1)}$ is a \emph{solution} of the equation in $\R^n \setminus \{ 0 \}$, if the constant $c_q$ is chosen appropriately.
Notice $u$ and $\tilde u$ decay to zero as $|x|\to \infty$, so having a hypothesis on the behaviour of $f(s)$ as $s\to 0$ is unavoidable for a nonexistence result to hold. Thus the following theorem is seen to be optimal in a certain sense.

\begin{thm}\label{slthm}
Assume that $n\geq 3$ and the nonlinearity $f:(0,\infty) \to (0,\infty)$ is continuous and satisfies \EQ{fpow}. Then the differential inequality \re{slinxs} has no positive solution in any exterior domain of $\R^n$.
\end{thm}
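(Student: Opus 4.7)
Suppose for contradiction that $u>0$ is a supersolution of $-\Delta u\ge f(u)$ on some exterior domain $\R^n\setminus\overline{B_{R_0}}$. By \eqref{fpow}, I fix $c_0,s_0>0$ with $f(s)\ge c_0 s^{n/(n-2)}$ for $s\in(0,s_0]$. My plan is to bootstrap a lower bound on $u$ near infinity by iteratively plugging the current bound into the nonlinearity and comparing $u$ with a radial barrier via the maximum principle.

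First, fix $r_1>R_0$ and set $m=\min_{|x|=r_1}u>0$. Comparing $u$ on annuli $\{r_1\le|x|\le R\}$ with the harmonic function $h(x)=m(r_1/|x|)^{n-2}$ and sending $R\to\infty$ (using that $u\ge 0$ on $\partial B_R$ and $h\to 0$ at infinity) produces the initial bound $u(x)\ge c_1 |x|^{-(n-2)}$ for $|x|\ge r_1$. A separate and simpler argument based on integrating $-\Delta u\ge f(u)$ over dilating balls and applying the divergence theorem rules out the scenario $\liminf_{|x|\to\infty}u>0$, so I may restrict attention to the case $u\to 0$ at infinity. After enlarging $r_1$ to some $r_2$, I then have $u\le s_0$, hence $-\Delta u\ge c_0 u^{n/(n-2)}$, on $\{|x|\ge r_2\}$.

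The heart of the proof is the iterative claim
\[
u(x)\;\ge\;C_k\,\frac{(\log|x|)^{a_k}}{|x|^{n-2}},\qquad|x|\ge r_2,
\]
for every $k\ge 0$, where $a_0=0$, $a_{k+1}=\tfrac{n}{n-2}\,a_k+1$, and $C_k>0$ satisfies an explicit recursion of the form $C_{k+1}\ge c\,C_k^{n/(n-2)}/a_{k+1}$ for a positive constant $c$ depending only on $n$ and $c_0$. Given the bound at step $k$, let $\psi_{k+1}$ be the radial solution of $-\Delta\psi_{k+1}=c_0\psi_k^{n/(n-2)}$ on $\{|x|>r_2\}$ with $\psi_{k+1}(r_2)=0$ and $\psi_{k+1}\to 0$ at infinity. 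A direct ODE computation, based on the identity $-\Delta\bigl(r^{-(n-2)}(\log r)^{a+1}\bigr)=(n-2)(a+1)(\log r)^a/r^n+O\bigl((\log r)^{a-1}/r^n\bigr)$, shows that $\psi_{k+1}$ has the claimed form with exponent $a_{k+1}$. Since $u\ge\psi_k$ and $s\mapsto s^{n/(n-2)}$ is increasing, the difference $u-\psi_{k+1}$ is superharmonic on $\{|x|>r_2\}$, nonnegative on $\partial B_{r_2}$, and has nonnegative liminf at infinity; the maximum principle delivers $u\ge\psi_{k+1}$, closing the induction.

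Because $n/(n-2)>1$ the exponents $a_k$ grow geometrically to $+\infty$. Solving the coupled linear recursion for $\log C_k$ yields $\log C_k=A\bigl(\tfrac{n}{n-2}\bigr)^k+O(k)$ for some constant $A$ depending on the initial data; consequently, at any fixed point $x_\star$ with $|x_\star|$ chosen large enough (depending only on $A$), the quantity $C_k(\log|x_\star|)^{a_k}/|x_\star|^{n-2}$ tends to $+\infty$ as $k\to\infty$, contradicting the finiteness of the value $u(x_\star)$. The main obstacle is executing the inductive step cleanly: constructing and controlling the barrier $\psi_{k+1}$ on the exterior domain with the sharp leading constant, and bookkeeping the constants $C_k$ so that their unavoidable decay does not overwhelm the geometric growth of $a_k$ before the final contradiction is reached.
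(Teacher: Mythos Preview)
Your bootstrap via iterated barriers $C_k(\log|x|)^{a_k}|x|^{-(n-2)}$ is a classical technique and genuinely different from the paper's route, which instead combines a quantitative strong maximum principle on annuli (Lemma~\ref{kslap}) with the monotone quantity $\rho(r)=\inf_{B_{2r}\setminus B_r}u/\Phi$ to show $\rho(2r)\ge\rho(r)+a$. But your proposal has a real gap at the reduction ``$u\to 0$ at infinity, so $u\le s_0$ for $|x|\ge r_2$.'' First, the sketched integral/divergence argument for ruling out $\liminf_{|x|\to\infty}u>0$ needs $f(u)$ bounded below by a positive constant, which requires $u$ to lie in a compact subset of $(0,\infty)$; nothing in the hypotheses forces a positive superharmonic function on an exterior domain to be bounded above (e.g.\ $1+\sum_k 2^{-k}\min(|x-x_k|^{2-n},M_k)$ with $|x_k|,M_k\to\infty$ is positive, superharmonic, and unbounded). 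Second, even granting $\liminf u=0$, this does not yield $\limsup u=0$. Yet your inductive step needs $-\Delta u\ge c_0u^{n/(n-2)}$ \emph{pointwise} on $\{|x|\ge r_2\}$, and that inequality is only available where $u\le s_0$; on the (possibly nonempty, unbounded) set $\{u>s_0\}$ the comparison $-\Delta(u-\psi_{k+1})\ge 0$ fails, and the maximum principle cannot close the induction.

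The paper's argument is engineered precisely to sidestep this: rather than bounding $u$ from above, it invokes the weak Harnack inequality (Lemma~\ref{vwhlap}) to find, in each dyadic annulus, a set $A_r$ of definite measure on which $m(r)\le u\le\bar C m(r)$. The quantitative strong maximum principle applied with $h=r^2 f(u_r)\chi_{A_r}$ then gives $m(r)\ge cr^2\min_{[m(r),\bar Cm(r)]}f$, which forces $m(r)\to 0$ and allows the hypothesis on $f$ near $0$ to be used only on $A_r$, never globally. The paper's own remark after the proof makes this explicit: if one additionally assumes $\liminf_{s\to\infty}f(s)>0$, the weak Harnack step becomes unnecessary---because that extra assumption is exactly what supplies a lower bound on $f(u)$ where $u$ is large. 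Without such an assumption, your iteration cannot be launched as written.
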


We have left the statement of Theorem \ref{slthm} intentionally vague as to the notion of supersolution, since the result holds regardless of whether we consider supersolutions in the classical, weak, or viscosity sense.

Several easy facts regarding the Laplacian on annuli are required for the proof of Theorem \ref{slthm}, and we state them now.

The key ingredient in the proof of Theorem \ref{slthm} is the following ``quantitative" strong maximum principle.

\begin{lem}\label{kslap}
Assume $h \in L^\infty(B_3\setminus B_{1/2})$ is nonnegative, and $u\geq 0$ satisfies
\begin{equation*}
-\Delta u \geq h(x)  \quad \mbox{in} \ B_3\setminus B_{1/2}.
\end{equation*}
There exists a constant $\bar c>0$ depending only on $n$ such that  for each $A\subset B_2\setminus B_1$
$$
\inf_{B_2\setminus B_1}u \geq \bar c |A|\inf_A h.
$$
\end{lem}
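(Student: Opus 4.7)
The plan is to compare $u$ to the Dirichlet Green's function of the Laplacian on the annulus $\Omega := B_3 \setminus B_{1/2}$, and then exploit that this Green's function is bounded below by a positive constant on the smaller annulus $B_2 \setminus B_1$.

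Concretely, set $m := \inf_A h$ and let $v \in C^{1,\alpha}(\overline \Omega) \cap C^\infty(\Omega)$ be the unique solution of
\begin{equation*}
-\Delta v = m \chi_A \quad \text{in } \Omega, \qquad v = 0 \quad \text{on } \partial\Omega.
\end{equation*}
Since $h \geq m \chi_A$ a.e.\ in $\Omega$, the hypothesis yields $-\Delta u \geq -\Delta v$ in $\Omega$, while $u \geq 0 = v$ on $\partial\Omega$. The weak comparison principle for the Laplacian (valid in all three notions of supersolution) gives $u \geq v$ in $\Omega$. It then suffices to prove that $v(x) \geq \bar c\, m\, |A|$ for every $x \in B_2 \setminus B_1$.

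For this, I would use the Green's function representation $v(x) = m \int_A G(x,y)\,dy$, where $G$ is the Dirichlet Green's function of $-\Delta$ on $\Omega$. The point is that there exists a constant $c_0 = c_0(n) > 0$ such that
\begin{equation*}
G(x,y) \geq c_0 \qquad \text{for all } x,y \in B_2 \setminus B_1.
\end{equation*}
Indeed, $G$ is strictly positive and continuous on $\Omega \times \Omega$ off the diagonal, and since $n \geq 3$ it satisfies $G(x,y) \sim c_n |x-y|^{2-n} \to +\infty$ as $y \to x$. Hence $G$ attains a positive minimum on the compact set $\{(x,y) \in \overline{B_2 \setminus B_1}^{\,2} : |x-y| \geq \delta\}$ for any small $\delta>0$, and is much larger than any fixed constant on the remaining diagonal neighborhood. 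Combining these two facts gives the uniform lower bound. Substituting into the representation formula,
\begin{equation*}
v(x) \;\geq\; m \int_A c_0 \,dy \;=\; c_0\, m\, |A| \qquad \text{for } x \in B_2 \setminus B_1,
\end{equation*}
and taking $\bar c := c_0$ finishes the argument.

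The only delicate step is the uniform lower bound on $G$ over $B_2 \setminus B_1$; the rest is comparison plus an integration. If one prefers to avoid explicitly invoking the Green's function (e.g.\ to keep the argument transparent for the later general operators $Q$), an equivalent route is to solve $-\Delta w = \chi_{B_\rho(y_0)}$ in $\Omega$ with $w=0$ on $\partial\Omega$ for each $y_0 \in B_2 \setminus B_1$ and small $\rho$, observe by the strong maximum principle and a scaling/compactness argument that $\inf_{B_2 \setminus B_1} w \geq c(n)|B_\rho|$, and then cover $A$ by finitely many such balls; this yields the same estimate and is the form that will generalize in Section~\ref{operators}.
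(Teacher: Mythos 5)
Your argument is correct and is essentially the paper's own proof: the paper deduces the lemma from the strict positivity of the Dirichlet Green's function of the annulus away from the boundary, giving $\inf_{B_2\setminus B_1} u \ge c\int_{B_2\setminus B_1} h$, which is exactly your comparison-plus-representation argument with $h \ge m\chi_A$. The uniform lower bound on $G$ over $B_2\setminus B_1$ that you flag as the delicate step is precisely the fact the paper invokes (with a reference to \cite[Lemma 3.2]{BC}), so no gap remains.
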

\begin{remark} We  denote with $\inf_A u$  the essential infimum of $u$ on the set $A$.
\end{remark}

Lemma \ref{kslap} is a simple consequence of the fact that Green's function for the Laplacian with respect to any domain is strictly positive away from the boundary of the domain, which yields
\begin{equation*}
\inf_{B_2\setminus B_1}u \geq  c \int_{B_2\setminus B_1} h(x) \, dx,
\end{equation*}
for some $c> 0$ depending only on the dimension $n$. See for example \cite[Lemma 3.2]{BC} and the references there for more precise statements on the Laplacian.

To show that it is only the behaviour of $f$ near zero which determines whether supersolutions of \eqref{slinxs} exist, we use the following consequence of the mean value property.
\begin{lem}\label{vwhlap}
For every $0 < \nu < 1$, there exists a constant $\bar C = \bar C(n,\nu)>1$ such that for any positive superharmonic function $u$ in $B_3 \setminus \bar B_{1/2}$ and any $x_0 \in B_2 \setminus B_1$, we have
\begin{equation*}
\left| \left\{ u \leq \bar C u(x_0) \right\} \cap (B_2\setminus B_1) \right| \geq \nu \left|B_2 \setminus B_1\right|.
\end{equation*}
\end{lem}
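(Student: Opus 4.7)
The plan is to derive the conclusion from the mean value inequality for positive superharmonic functions, namely $u(y) \geq \dashint_{B_r(y)} u\,dx$ whenever $B_r(y) \subset B_3 \setminus \bar B_{1/2}$, combined with a chain-of-balls argument. The chief obstacle is that a positive superharmonic function need not satisfy a two-sided Harnack inequality, so one cannot hope to bound $u$ pointwise on $B_2 \setminus B_1$ by $u(x_0)$; the argument must be measure-theoretic.

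First, I would fix a small radius, say $r = 1/4$, ensuring $B_r(z) \subset B_3 \setminus \bar B_{1/2}$ for every $z \in B_{9/4} \setminus \bar B_{3/4}$. The mean value inequality, combined with Chebyshev, immediately gives the local estimate
\[
\left|\{u > K\,u(z)\} \cap B_r(z)\right| \leq |B_r|/K
\]
for every such $z$ and any $K > 1$. Applied at $z = x_0$ alone, this only controls $u$ on a small ball near $x_0$, so the real task is to propagate the bound to the entire annulus $B_2 \setminus B_1$.

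To do this, I would cover $B_2 \setminus B_1$ by balls $B_{r/4}(z_0), B_{r/4}(z_1), \ldots, B_{r/4}(z_N)$ with $z_0 = x_0$, $|z_{j+1} - z_j| < r/4$, and $N = N(n)$ a dimensional constant, arranged as a chain emanating from $x_0$. Then, by induction on $j$, I would produce points $y_j \in B_{r/4}(z_j)$ with $u(y_j) \leq K^{\,j} u(x_0)$ for a fixed $K > 4^n$. Given $y_j$, the triangle inequality $|z_{j+1} - y_j| < r/2$ yields $B_{r/4}(z_{j+1}) \subset B_r(y_j)$, so applying the local estimate at $y_j$ forces the bad set $\{u > K u(y_j)\} \cap B_{r/4}(z_{j+1})$ to have measure at most $|B_r|/K < |B_{r/4}|$, leaving room to choose $y_{j+1}$ in its complement with $u(y_{j+1}) \leq K u(y_j) \leq K^{j+1} u(x_0)$.

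Finally, for each $j$ the mean value inequality at $y_j$ gives $\int_{B_{r/4}(z_j)} u \leq \int_{B_r(y_j)} u \leq |B_r|\,K^N u(x_0)$, so Chebyshev yields $|\{u > \bar C u(x_0)\} \cap B_{r/4}(z_j)| \leq |B_r| K^N / \bar C$. Summing over $j$ and using that the $B_{r/4}(z_j)$ cover $B_2 \setminus B_1$ produces
\[
\left|\{u > \bar C u(x_0)\} \cap (B_2 \setminus B_1)\right| \leq \frac{N\,|B_r|\,K^N}{\bar C},
\]
and the choice $\bar C = N\,|B_r|\,K^N / \bigl((1-\nu)\,|B_2 \setminus B_1|\bigr)$, which depends only on $n$ and $\nu$, finishes the proof. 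The chain loses a factor of $K$ per step, giving the compounded constant $K^N$; this is the essential cost of the absence of a two-sided Harnack inequality in the superharmonic setting.
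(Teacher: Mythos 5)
Your argument is correct: the mean-value/Chebyshev estimate, propagated along a chain of balls of dimensional length and combined with a final union bound over the covering, does give the lemma with $\bar C=\bar C(n,\nu)$ (the only details left implicit — the existence of a chain with $N=N(n)$ for the fixed annulus, the $N$ versus $N+1$ bookkeeping, and taking $\bar C\geq 2$ — are routine). This is essentially the approach the paper intends, since it states the lemma as a consequence of the mean value property without giving a proof (noting elsewhere that the general-operator analogue follows from the weak Harnack inequality); you have simply supplied the chaining details.
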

We remark that Lemma \ref{vwhlap} is clearly weaker than the weak Harnack inequality.

\medskip

Applying the comparison principle to a positive superharmonic function and the fundamental solution $\Phi(x) = |x|^{2-n}$ of Laplace's equation yields the following simple lemma, which is well-known. For the reader's convenience, we recall an elementary proof. Here and throughout the paper, $C$ and $c$ denote positive constants which may change from line to line.

\begin{lem}\label{fundycmpl}
Suppose that $u > 0$ is superharmonic in an exterior domain $\Omega$ of $\R^n$, with $n\geq 3$. Then there are constants $C,c > 0$, depending only on $u$ and $\Omega$, such that
\begin{equation} \label{fundycmp}
c r^{2-n} \leq \inf_{B_{2r} \setminus B_r} u \leq C\quad \mbox{for every sufficiently large} \ r> 0.
\end{equation}
\end{lem}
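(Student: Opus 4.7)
The plan is to derive both halves of \re{fundycmp} by comparing $u$ against scaled copies of the fundamental solution $\Phi(x)=|x|^{2-n}$, which is the natural barrier for the Laplacian in an exterior domain.

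I would first fix a radius $R_0$ so that $\R^n\setminus\bar B_{R_0}\subset\Omega$ and set $m_0=\inf_{\partial B_{R_0}}u>0$. For the lower bound, construct the radial harmonic function on the annulus $B_R\setminus\bar B_{R_0}$ which equals $m_0$ on the inner sphere and vanishes on the outer sphere, namely
\[
v_R(x)=m_0\cdot\frac{|x|^{2-n}-R^{2-n}}{R_0^{2-n}-R^{2-n}}.
\]
Since $u\ge v_R$ on the boundary of the annulus and $-\Delta u\ge 0=-\Delta v_R$ inside, the weak comparison principle gives $u\ge v_R$ throughout. Sending $R\to\infty$ yields $u(x)\ge m_0 R_0^{n-2}|x|^{2-n}$ for all $|x|>R_0$, and then taking the infimum over $B_{2r}\setminus B_r$ produces $\inf_{B_{2r}\setminus B_r}u\ge cr^{2-n}$ with $c=2^{2-n}m_0 R_0^{n-2}$.

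For the upper bound, let $m(r)=\inf_{\partial B_r}u$. Applying the same comparison argument but now on an arbitrary annulus $B_{r_3}\setminus\bar B_{r_1}$ with $R_0<r_1<r_2<r_3$, using boundary values $m(r_1)$ and $m(r_3)$ for the radial harmonic comparison function, and then evaluating on $\partial B_{r_2}$, produces the three-spheres inequality
\[
m(r_2)\ \ge\ m(r_1)\cdot\frac{r_2^{2-n}-r_3^{2-n}}{r_1^{2-n}-r_3^{2-n}}+m(r_3)\cdot\frac{r_1^{2-n}-r_2^{2-n}}{r_1^{2-n}-r_3^{2-n}}.
\]
This is precisely the statement that the function $\tilde m(\tau):=m(\tau^{1/(2-n)})$ is concave in the variable $\tau=r^{2-n}$ on the interval $(0,R_0^{2-n}]$. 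The concluding step is the elementary observation that a positive concave function on a half-open interval $(0,T]$ must be bounded: if $\tilde m(\tau_k)\to+\infty$ along some $\tau_k\to 0^+$, then for any fixed $\tau_2\in(0,T)$ the concavity inequality with $\tau_1=T$ and $\tau_3=\tau_k$ would force $\tilde m(\tau_2)=+\infty$, contradicting the (obvious) finiteness of $u$ on $\partial B_{r_2}$. Hence $m(r)\le C$ for all $r>R_0$, and in particular $\inf_{B_{2r}\setminus B_r}u\le m(2r)\le C$.

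The main conceptual point, and the only one that is not entirely routine, is noticing that the upper bound is a consequence of a Hadamard-type concavity in the variable $r^{2-n}$; once this is recognized, both halves of the lemma are short consequences of the comparison principle with the fundamental solution.
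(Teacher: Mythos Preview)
Your proof is correct, and the lower bound is essentially identical to the paper's: both compare $u$ with a multiple of $|x|^{2-n}$ on an expanding annulus and send the outer radius to infinity.

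For the upper bound the two arguments diverge in presentation, though not in substance. The paper makes a single comparison: on $B_r\setminus B_{r_0}$ it compares $u$ with
\[
\Psi_r(x)=\Bigl(\inf_{B_{2r}\setminus B_r}u\Bigr)\bigl(1-r_0^{n-2}|x|^{2-n}\bigr),
\]
which vanishes on the inner sphere and lies below $u$ on the outer one, and then evaluates at a fixed annulus $B_{4r_0}\setminus B_{2r_0}$ to obtain $\inf_{B_{2r}\setminus B_r}u\le(1-2^{2-n})^{-1}\inf_{B_{4r_0}\setminus B_{2r_0}}u$. Your route---the Hadamard three-spheres inequality giving concavity of $r\mapsto m(r)$ in the variable $r^{2-n}$, followed by the observation that a positive concave function on $(0,T]$ is bounded---is perfectly valid and conceptually illuminating, but slightly more elaborate than necessary here. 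In effect, the paper's comparison function $\Psi_r$ is exactly the $r_1=r_0$, $m(r_1)=0$ degeneration of your three-spheres barrier, so the paper is using a special case of your inequality that already suffices. Your approach has the merit of exhibiting the full Hadamard structure; the paper's buys a one-line conclusion.
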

\begin{proof}

Fix $r_0> 0$  such that $\R^n \setminus B_{r_0} \subset \Omega$. Select $c>0$ so small that $u\ge c\Phi$ in a neighbourhood of $\partial B_{r_0}$. Then for each $\varepsilon >0$, there exists $\bar R= \bar R(\varepsilon) > r_0$ such that $u+\varepsilon\ge \varepsilon\ge c\Phi$ in $\R^n\setminus B_{\bar R}$. Applying the maximum principle to
$$
-\Delta(u+\varepsilon)\ge 0= -\Delta(c\Phi)
$$
in $B_{R}\setminus B_{r_0}$, for each $R>\bar R(\ep)$, we conclude that $u+\varepsilon\ge c\Phi $ in $\R^n\setminus B_{r_0}$. Letting $\varepsilon\to 0$ we obtain $u\ge c\Phi$ in $\R^n\setminus B_{r_0}$, which gives the first inequality in \EQ{fundycmp}.

 For the second inequality in \EQ{fundycmp}, observe that for every $r> r_0$
\begin{equation*}
u(x) \geq \Psi_r(x):=\left( \inf_{B_{2r}\setminus B_r} u \right) \left( 1 - r_0^{n-2} |x|^{2-n} \right) \quad \mbox{for every} \ x \in \partial (B_r\setminus B_{r_0}),
\end{equation*}
as well as $-\Delta u \ge 0=-\Delta \Psi_r$ in $\R^n \setminus B_{r_0}$. By the maximum principle we deduce that $u \geq \Psi_r(x)$ in $B_r \setminus B_{r_0}$. In particular, for every $r> 2r_0$, we have
\begin{equation*}
\inf_{B_{4r_0}\setminus B_{2r_0}} u \geq \left( \inf_{B_{2r}\setminus B_r} u \right) \left( 1 - 2^{2-n}  \right),
\end{equation*}
which yields the second inequality in \EQ{fundycmp}.
\end{proof}

Let us now combine the three lemmas above into a proof of Theorem~\ref{slthm}.

\begin{proof}[{\bf Proof of Theorem \ref{slthm}}]
Let us suppose that $u> 0$ is a supersolution of \EQ{slinxs} in $\R^n \setminus B_{r_0}$, for some $r_0 > 0$.
For each $r> 2r_0$, denote $u_r (x) := u(rx)$ and observe that $u_r$ is a supersolution of
\begin{equation*}
-\Delta u_r \geq r^2 f(u_r) \quad \mbox{in} \ \R^n \setminus B_{r_0/r}.
\end{equation*}
For each $r> 2r_0$, define the quantity
\begin{equation*}
m(r): = \inf_{\bar B_2\setminus B_1} u_r = \inf_{\bar B_{2r} \setminus B_r} u.
\end{equation*}
 Set $A_r:=\{x\in B_2\setminus B_1: m(r)\le u_r(x)\le \bar C m(r)\}$, where $\bar C = \bar C(n,\frac{1}{2})> 1$ is as in Lemma \ref{vwhlap}. Then Lemma \ref{vwhlap} implies that
\begin{equation*}
|A_r|\ge (1/2)|B_2\setminus B_1|.
\end{equation*}
Thus applying Lemma \ref{kslap} with $A=A_r$ and $h(x):= r^2f(u_r(x))$ produces the estimate
\begin{align}\label{limmins1}
m(r) \geq \frac{1}{2} \bar cr^2 |B_2\setminus B_1|\min_{s\in\left[ m(r),\bar C m(r)\right]} f(s) \quad \mbox{for every} \ r\geq 2r_0,
\end{align}
where $\bar c>0$ is as in Lemma \ref{kslap}. By Lemma \ref{fundycmpl} $ m(r)$ is bounded, so
\begin{equation} \label{limmins}
\min_{\left[ m(r),\bar C m(r)\right]} f\le C r^{-2}m(r)\le Cr^{-2} \to 0\quad\mbox{ as }\; r\to \infty .
\end{equation}
Since $f$ is continuous and $f> 0$ on $(0,\infty)$, it follows immediately from $m(r)\le C$ that $
 m(r) \to 0$ as $r\to \infty$.
Hence if $r$ is sufficiently large, \EQ{fpow} and \EQ{limmins} imply
\begin{equation}\label{limmins2}
(m(r))^{n/(n-2)} \le \min_{\left[ m(r),\bar C m(r)\right]} f\le  C r^{-2}m(r).
\end{equation}
We may rewrite this inequality as
\begin{equation} \label{mupbnd}
m(r) \leq C r^{2-n} \quad \mbox{for every sufficiently large} \ r\geq 2r_0.
\end{equation}
Recall that by Lemma \ref{fundycmpl} we also have, for some $c> 0$,
\begin{equation} \label{mlwbnd}
m(r) \geq c r^{2-n} \quad \mbox{for every} \ r > 2r_0.
\end{equation} Let us now define the quantity
\begin{equation*}
\rho(r) : = \inf_{B_{2r}\setminus B_r} \frac{u}{\Phi} > 0, \quad \mbox{for} \ r > 2r_0,\qquad \Phi(x)= |x|^{2-n}.
\end{equation*}
Observe that for every $r>r_0$ and $\ep > 0$, we may choose $R>r$ large enough that
\begin{equation*}
u(x) + \ep \geq \rho(r) \Phi(x) \quad \mbox{on} \ \partial \left( B_{R} \setminus B_r \right).
\end{equation*}
By the maximum principle, $ u(x) + \ep \geq \rho(r) \Phi(x)$ in $\ B_{R} \setminus B_r$.
Sending $R\to \infty$ and then $\ep \to 0$, we discover that
\begin{equation}\label{ineqs1}
u(x) \geq \rho(r) \Phi(x) \quad \mbox{in} \ \R^n \setminus B_r,
\end{equation}
that is, $\rho(r) = \inf_{\rn\setminus B_r} u/\Phi$. Therefore the map $r \mapsto \rho(r)$ is nondecreasing. For every $r>2r_0$, define the function
\begin{equation*}
v_r(x) := u(rx) - \rho(r/2) \Phi(rx).
\end{equation*}
Observe that by \EQ{ineqs1} we have  $v_r \geq 0$ in $\R^n \setminus B_{1/2}$, and
\begin{equation*}
-\Delta v_r \geq r^2 f(u_{r}) \quad \mbox{in} \ \R^n \setminus B_{1/2}.
\end{equation*}
Using again Lemma \ref{kslap} with $A=A_r$ and $h(x):= r^2f(u_r(x))$, we deduce from \re{limmins2} and \re{mlwbnd} that
\begin{equation*}
\inf_{B_2\setminus B_1} v_r \geq a r^2 (m(r))^{n/(n-2)} \geq a r^{2-n} \quad \mbox{for every sufficiently large} \ r \geq 2r_0,
\end{equation*}
where $a> 0$ does not depend on $r$. In particular,
\begin{equation*}
u(rx) \geq \left( \rho(r/2) + a\right) \Phi(rx) \quad \mbox{on} \ B_2 \setminus B_1.
\end{equation*}
That is, $\rho(r) \geq \rho(r/2) + a$ for all sufficiently large $r$. Therefore we obtain that $\lim_{r\to \infty}\rho(r) = \infty$, which contradicts our inequality \EQ{mupbnd}.
\end{proof}

\begin{remark} Note that if instead of \re{fpow} we assumed the stronger hypothesis $\liminf_{s\searrow 0} s^{-\sigma}f(s) >0$ for some $\sigma<n/(n-2)$, then  \re{mupbnd} is replaced by $m(r) \le r^{-\beta}$ for $\beta=\frac{2}{\sigma-1}>n-2$, which immediately contradicts \re{mlwbnd}.
\end{remark}
\begin{remark}
If in addition to \re{fpow} we  assumed
\begin{equation}\label{localinf}
\liminf_{s\to\infty}f(s)>0,
\end{equation}
then we do not need Lemma \ref{vwhlap}, that is, we do not need to use a weak Harnack inequality. Indeed, we can repeat the proof above with $A=B_2\setminus B_1$, observing that \re{localinf} prevents $m(r)$ from going to infinity as $r\to\infty$, so $\displaystyle\min_{\left[ m(r),\infty\right)} f\to 0$ as $r\to \infty$.
\end{remark}

The proof of the following analogue of Theorem \ref{slthm} for two dimensions is postponed until Section \ref{liouville}, where we obtain it as a consequence of Corollary \ref{FNLm}.

\begin{thm}\label{fpowtwodeethm}
Let $f$ be a positive, continuous function on $(0,\infty)$ which satisfies
\begin{equation}\label{fpowtwodee}
\lim_{s\to \infty} e^{a s} f(s) = \infty \quad \mbox{for every} \ a >0.
\end{equation}
Then the inequality \EQ{slinxs} has no positive solution in any exterior domain of $\R^2$.
\end{thm}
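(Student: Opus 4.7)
The plan is to follow the strategy of the proof of \THM{slthm}, adapting the arguments to two dimensions, where the fundamental solution $-\log|x|$ grows at infinity rather than decays. Suppose for contradiction that $u>0$ satisfies \EQ{slinxs} in $\R^2\setminus B_{r_0}$, and set $m(r):=\inf_{B_{2r}\setminus B_r}u$ for $r>2r_0$. Rescaling $u_r(x):=u(rx)$ so that $-\Delta u_r\ge r^2 f(u_r)$ and combining \LEM{kslap} with \LEM{vwhlap} exactly as in the proof of \THM{slthm} yields the basic estimate
\begin{equation}\label{eq:plan-bound}
\min_{s\in[m(r),\bar C m(r)]}f(s)\;\le\;\frac{C\,m(r)}{r^2}.
\end{equation}

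The next step is to establish a two-dimensional analogue of \LEM{fundycmpl}: there exist constants $c,C>0$ depending on $u$ such that
\[
c\;\le\;m(r)\;\le\;C(1+\log r)\quad\mbox{for all sufficiently large } r.
\]
For the lower bound, let $\delta:=\inf_{\partial B_{r_0+1}}u>0$ and compare $u$ with the harmonic function $w_R(x):=\delta\,\log(R/|x|)/\log(R/(r_0+1))$ on $B_R\setminus B_{r_0+1}$; since $u\ge w_R$ on both boundary components, the weak comparison principle gives $u\ge w_R$, and sending $R\to\infty$ yields $u\ge\delta$ throughout $\R^2\setminus B_{r_0+1}$. For the upper bound, consider the spherical average $\bar u(r):=\fint_{\partial B_r}u$: it satisfies $(r\bar u'(r))'\le 0$, so $\bar u$ is concave in $\log r$ and hence $\bar u(r)\le C(1+\log r)$; since $m(r)\le\bar u(r)$, the same bound holds for $m(r)$.

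With these bounds in hand, the contradiction is assembled as in the three-dimensional case. First, \eqref{eq:plan-bound} combined with $m(r)\ge c>0$ forces $m(r)\to\infty$: otherwise $m(r_k)$ would remain in a compact subinterval $[c,M]\subset(0,\infty)$ along some sequence $r_k\to\infty$, where $f$ has a positive minimum, contradicting the fact that the right side of \eqref{eq:plan-bound} tends to $0$ along that sequence. Once $m(r)\to\infty$, hypothesis \EQ{fpowtwodee} means that for every $a>0$ one has $f(s)\ge C_a e^{-as}$ for all sufficiently large $s$, so \eqref{eq:plan-bound} gives
\[
C_a\, e^{-a\bar C m(r)}\;\le\;\frac{C\,m(r)}{r^2}.
\]
Taking logarithms and using $\log m(r)=o(m(r))$, this implies $2\log r\le(1+o(1))\,a\bar C\,m(r)$, so $\liminf_{r\to\infty}m(r)/\log r\ge 2/(a\bar C)$. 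Since $a>0$ is arbitrary, $m(r)/\log r\to\infty$, which contradicts the upper bound $m(r)\le C(1+\log r)$.

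The main obstacle is the upper bound in the two-dimensional analogue of \LEM{fundycmpl}: the fundamental solution is unbounded at infinity and changes sign, so the iterative improvement of the ratio $u/\Phi$ used at the end of the proof of \THM{slthm} cannot be transplanted verbatim. The cleanest workaround is to avoid barriers altogether and rely on the fact that the radial spherical average of any superharmonic function is concave in $\log r$, which yields the logarithmic upper bound directly.
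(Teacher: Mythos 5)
Your proof is correct, but it takes a different route from the paper. The paper does not prove this theorem directly: it deduces it in Section~\ref{liouville} as a special case of \THM{FNL} via \COR{FNLm} (with $Q=\Delta$, $p=2$, $\alpha^*=\widetilde\alpha^*=0$), and in the relevant case $m(r)\to\infty$ the general proof first establishes the two-sided bound $c\log r\le m(r)\le C\log r$ and then needs a sliding/iteration argument with the fundamental solution $\widetilde\Phi$ (the quantity $\omega(r)=\inf u/(\widetilde\Phi-1)$ decreasing by a fixed amount at each dyadic scale) to reach a contradiction, since for a single critical nonlinearity the matching upper and lower bounds are not by themselves contradictory. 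You reuse the same first half of the machinery (rescaling, \LEM{kslap}, \LEM{vwhlap}, and the bounds $c\le m(r)\le C(1+\log r)$, which you get from spherical averages being concave in $\log r$ rather than from comparison with $\pm\log|x|$ as in \LEM{mbounds_1} — either works), but your endgame is genuinely simpler: because \EQ{fpowtwodee} is a one-parameter family of conditions, the estimate $e^{-a\bar C m(r)}\le Cm(r)/r^2$ gives $m(r)\gtrsim \frac{2}{a\bar C}\log r$ for \emph{every} $a>0$, and choosing $a$ small beats the fixed constant in the logarithmic upper bound, so no iteration of $\rho$ or $\omega$ is needed. What each approach buys: the paper's argument is uniform over operators satisfying (H1)--(H5) and handles genuinely critical cases where the constants match exactly; yours is shorter, self-contained in two dimensions, and exploits both the exponential-family hypothesis and Laplacian-specific tools (spherical means), which is perfectly legitimate for this particular statement. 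The only cosmetic points: take the infimum defining $m(r)$ over the closed annulus (or compare with the spherical average at a radius inside the annulus) so that $m(r)\le\bar u(\rho)$ is literal, and note that $\delta=\inf_{\partial B_{r_0+1}}u>0$ uses lower semicontinuity of the supersolution, exactly as the paper implicitly does in \LEM{fundycmpl}.
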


Observe that \EQ{fpowtwodee} is a condition on $f(s)$ near $s=\infty$, as opposed to near zero. This difference from condition \EQ{fpow} is due to the behavior of the fundamental solution of Laplace's equation near infinity in dimension $n=2$ versus higher dimensions. See Section \ref{liouville} for a much more detailed study of this phenomenon. In cone-like domains, one must impose conditions on $f$ both near $s=0$ and $s=\infty$ to obtain the nonexistence of supersolutions, as we will see in Section \ref{cones}.

Theorem \ref{fpowtwodeethm} is also sharp. Indeed,  for any $a > 0$, the function
\begin{equation*}
u(x) := \textstyle\frac{2}{a}\left( \log|x| + \log\left( \log|x| \right) \right)
\end{equation*}
is a smooth positive solution of the equation
\begin{equation*}
-\Delta u = \textstyle\frac{2}{a} e^{-a u} \quad \mbox{in} \ \R^2 \setminus B_{3}.
\end{equation*}
Note that, as is well-known, there is no positive solution of $-\Delta u \geq 0$ in $\R^2 \setminus \{ 0 \}$, except for constant functions. See Theorem \ref{twodee} for a more general statement.

\section{Preliminaries}\label{operators}

\subsection{Several properties of supersolutions}
In this section we state in detail and comment on the hypotheses (H1)-(H5) under which we prove our main Liouville results. We also confirm that these hypotheses are satisfied by the $p$-Laplacian operator and fully nonlinear Isaacs operators.

Recall the \emph{$p$-Laplacian} is defined by
\begin{equation*}
\Delta_p u := \divg\!\left( |Du|^{p-2} Du \right), \quad 1 < p < \infty.
\end{equation*}
For the sake of simplicity, we do not consider more general quasilinear operators, although our techniques apply for instance to operators of the more general form $Q[u] = \divg(A(x,Du))$, with $A$ satisfying hypotheses (1.1)-(1.4) in \cite{Dama}.

\medskip

A \emph{uniformly elliptic Isaacs operator} $F$ is a function $F: \Sy \to \R$ satisfying the uniform ellipticity condition
\begin{equation} \label{unifel}
\qquad \pucci(M-N) \leq F(M) - F(N) \leq \Pucci(M-N) \quad \mbox{for all} \ M,N\in \Sy,
\end{equation}
and which is positively homogeneous of order one:
\begin{equation} \label{poshom}
F(tM) = t F(M) \quad \mbox{for all} \ t\geq 0, \ M \in \Sy.
\end{equation}
Here $\Sy$ is the set of $n$-by-$n$ symmetric matrices, and $\Pucci$ and $\pucci$ are the Pucci extremal operators defined for instance in \cite{CC}. Equivalent to \EQ{unifel} and \EQ{poshom} is the requirement that $F$ be an inf-sup or a sup-inf of linear uniformly elliptic operators
\begin{equation*}
F(D^2u) = \inf_{\alpha} \sup_\beta \trace\left(A^{\alpha\beta} D^2u\right)
\end{equation*}
over a collection of matrices $\{ A^{\alpha\beta} \}$ such that $\lambda I_n \leq A^{\alpha\beta} \leq \Lambda I_n$ for all $\alpha$ and $\beta$. Consult \cite{CC} for more on fully nonlinear, uniformly elliptic equations.

\medskip

Our notion of solution is chosen to suit the particular operator under consideration. The $p$-Laplacian is of divergence form, and thus we use the weak integral formulation. More precisely, a \emph{weak supersolution} of the quasilinear equation  \begin{equation}\label{plapfu}
-\mathrm{div}A(x,Du) =   f(u,x)
\end{equation}
in a domain $\Omega \subseteq \R^n$ is a function $u\in W^{1,p}_{\mathrm{loc}}(\Omega)$ with the property that for all nonnegative  $\varphi \in C^\infty_0(\Omega)$ we have
\begin{equation*}
\int_\Omega A(x,Du) \cdot D\varphi \, dx \geq \int_\Omega f(u,x) \varphi \, dx.
\end{equation*}
When $Q[u]=F(D^2u)$ for an Isaacs operator $F$, the appropriate weak notion of solution is that of viscosity solution. Namely, $u$ satisfies  the inequality
\begin{equation*}
-F(D^2u)\geq \ (\leq) \ f(u,x)
\end{equation*}
in the \emph{viscosity sense} in $\Omega$ if for each  $x_0\in \Omega$ and $\varphi\in C^2(\Omega)$  for which the map $x \mapsto u(x) - \varphi(x)$ has a local minimum (maximum) at $x_0$, we have
\begin{equation*}
-F(D^2\varphi(x_0)) \geq \ (\leq) \ f(u(x_0),x_0).
\end{equation*}
Henceforth, when we write a differential inequality such as $-Q[u] \geq f(u,x)$, we intend that it be interpreted in the appropriate sense.

\medskip

We now present a list of properties which these operators share and upon which our method is based. We will confirm below that the following hold in the case that $Q$ is the $p$-Laplacian operator or an Isaacs operator:

 \begin{itemize}
 \item[(H1)] $Q$ satisfies  {\it a weak comparison principle}: if $-Q[u]\leq 0 \leq -Q[v]$ in a bounded domain $\Omega$, and $u\leq v$ on $\partial \Omega$, then $u \leq v$ in $\Omega$;
 \item[(H2)] $Q$ has {\it fundamental solutions}: there exist functions $\Phi, \widetilde \Phi$ which satisfy $-\Q{\Phi} =0=-\Q{\tilphi}$ in $ \R^n \setminus\{0\}$, and $\Phi, \widetilde \Phi$ are approximately homogeneous in the sense of  \re{homo} below;
 \item[(H3)] $Q$ satisfies {\it a quantitative strong comparison principle}: with $w=0,\Phi, \tilde \Phi$, if $-Q[u]\ge \chi_\omega \ge 0 = -Q[w]$ in a bounded $\Omega$ and some compact subset $\omega\subset \Omega$ of positive measure, then $u>w+c_0$ in any $K\subset\subset \Omega$, where $c_0>0$ depends only on $Q,K,\Omega$, and a lower bound for $|\omega|$;
 \item[(H4)] $Q$ satisfies  {\it a very weak Harnack inequality}: if $-Q[u]\ge 0$  in a bounded $\Omega$ and $K\subset\subset \Omega$, then for each $0 < \tau < 1$ there exists $\bar C = \bar C(\tau,Q,K,\Omega)>1$ such that for any  point $x_0 \in K$, we have
$
\left| \left\{ u \leq \bar C u(x_0) \right\} \cap K \right| \geq \tau \left|K\right|$;
\item[(H5)] $Q$ \emph{has no zero order term} and \emph{possesses some homogeneity}: precisely, we have $Q[u+c]=Q[u]$ for each  $c\in \mathbb{R}$;   $Q[tu]= t^{p-1}Q[u]$ for some $p>1$ and every $t\ge 0$;  if $u$ satisfies $-Q[u]\ge f(u,x)$ in $\Omega$ and we set $u_r(x):= u(rx)$, then $-Q[u_r] \ge r^p f(rx,u_r)$ in $\Omega_r:=\Omega/r$.
\end{itemize}

The  hypotheses (H2) and (H5) can be weakened, as will be obvious from the proofs below. Namely, we can assume that if $u$ satisfies $-Q[u]\ge f(u,x)$ in $\Omega$, then $-Q_r [u_r] \ge r^p f(u_r,rx)$ on
 $\Omega_r$, for some operator $Q_r$ which satisfies the same hypotheses as $Q$, with constants independent of $r$; and  that for some $\beta>0$ the operator $Q^t[ u] := t^{-\beta}Q[tu]$ satisfies the same hypotheses as $Q$ with constants independent of $t>0$. We can also assume that the functions $\Phi, \tilde \Phi$ be only subsolutions in some exterior domain in $\rn$ (except for the last statement in Theorem \ref{twodee}).

\medskip


Let us now recall that both the $p$-Laplacian and Isaacs operators satisfy conditions (H1)-(H5). We begin by recalling the weak comparison principle. For  $p$-Laplacian type opertors, we refer for example to \cite[Corollary 3.4.2]{PS}, while for Isaacs operators, this is a particular case of the results in \cite{CC,UG}.

\begin{prop}\label{wcp}
Let $Q$ denote the $p$-Laplacian or an Isaacs operator. Suppose that $\Omega$ is a bounded domain, and $u$ and $v$ satisfy the inequalities
\begin{equation*}
-Q[u] \leq 0 \leq -Q[v] \quad \mbox{in} \ \Omega,
\end{equation*}
and $u \leq v$ on $\partial\Omega$. Then $u\leq v$ in $\Omega$.
\end{prop}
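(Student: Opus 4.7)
The plan is to handle the two cases separately, since they involve different notions of weak solution, and in each case the result is standard and can ultimately be quoted from the literature.

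For the $p$-Laplacian in the weak sense, I would use the classical energy argument. Since $u \leq v$ on $\partial \Omega$ and $u, v \in W^{1,p}_{\mathrm{loc}}(\Omega)$, the function $\varphi := (u-v)^+$ lies in $W^{1,p}_0(\Omega)$ and is admissible in the weak formulations of $-\Delta_p u \leq 0$ and $-\Delta_p v \geq 0$. Subtracting yields
\begin{equation*}
\int_\Omega \bigl( |Du|^{p-2} Du - |Dv|^{p-2} Dv \bigr) \cdot D(u-v)^+ \, dx \leq 0.
\end{equation*}
On the set $\{u > v\}$ one has $D(u-v)^+ = Du - Dv$, and the standard strict monotonicity inequality for $\xi \mapsto |\xi|^{p-2}\xi$ shows the integrand is pointwise nonnegative, vanishing only where $Du = Dv$. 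Hence $D(u-v)^+ \equiv 0$ almost everywhere in $\Omega$, and the Poincar\'e inequality forces $(u-v)^+ \equiv 0$, i.e.\ $u \leq v$ in $\Omega$. This is the argument behind \cite[Corollary 3.4.2]{PS}.

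For an Isaacs operator $F$ in the viscosity sense, I would show that the difference $w := u - v$ is a viscosity subsolution of the Pucci extremal equation $\Pucci(D^2 w) \geq 0$ in $\Omega$, and then invoke the standard ABP/weak maximum principle for $\Pucci$ (as in \cite{CC}, Chapter 3), which gives $\sup_\Omega(u-v) \leq \sup_{\partial\Omega}(u-v)^+ \leq 0$. Subtracting viscosity inequalities is not immediate, so the first step is executed via the Crandall--Ishii--Lions theorem on sums: doubling variables with the penalty $\Phi_\epsilon(x,y) := u(x) - v(y) - (2\epsilon)^{-1}|x-y|^2$ produces symmetric matrices $M_\epsilon, N_\epsilon$ associated to $u$ and $v$ at the maximum points, with error controls that vanish as $\epsilon \to 0$, and uniform ellipticity \eqref{unifel} then gives
\begin{equation*}
\Pucci(M_\epsilon - N_\epsilon) \geq F(M_\epsilon) - F(N_\epsilon) \geq 0,
\end{equation*}
which is exactly the viscosity subsolution property for $w$ against the test matrix $M_\epsilon - N_\epsilon$.

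The routine ingredients are the algebraic monotonicity of $\xi \mapsto |\xi|^{p-2}\xi$ for the quasilinear part, and the classical weak maximum principle for $\Pucci$ for the Isaacs part. The one nontrivial step, and the only real obstacle, is the subtraction of viscosity inequalities needed in the Isaacs case, which is handled by the standard theorem on sums and is not peculiar to the present paper. For this reason I would follow the authors and simply cite \cite[Corollary 3.4.2]{PS} for the quasilinear case and the comparison theorem of \cite{CC,UG} for the fully nonlinear case, rather than reproducing the machinery.
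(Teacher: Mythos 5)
Your proposal is correct and ends up in the same place as the paper, which offers no proof of its own but simply cites \cite[Corollary 3.4.2]{PS} for the quasilinear case and the comparison results of \cite{CC,UG} for the Isaacs case. The standard arguments you sketch (the monotonicity/energy argument for $\Delta_p$ and the theorem-on-sums reduction to $\Pucci(D^2(u-v))\geq 0$ plus the ABP maximum principle) are exactly what lies behind those citations, so there is nothing to add.
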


Another important property for our purposes is the availability of solutions of $-Q[u]\leq 0$ with given behavior at infinity.  For $\alpha\in \R$, we denote
\begin{equation}\label{eq:radfun}
\radfun\alpha(x) := \left\{ \begin{array}{ll}
|x|^{-\alpha} & \mathrm{if} \ \ \alpha > 0, \\
- \log |x| & \mathrm{if} \ \ \alpha = 0, \\
- |x|^{-\alpha} & \mathrm{if} \ \ \alpha < 0.
\end{array}\right.
\end{equation}

\begin{prop}\label{fundysol} Let $Q$ denote the $p$-Laplacian or an Isaacs operator. Then there exist numbers $\al,\tilal\in (-1,\infty)$ and functions $\Phi,\tilphi$ such that
\begin{equation}\label{fundey}
-\Q{\Phi} =0=-\Q{\tilphi} \quad\mbox{in }\ \R^n \setminus\{0\},
\end{equation}
and for some positive constants $c,C > 0$,
\begin{equation}\label{homo}
\begin{split}
c\xi_\al \leq \Phi  \leq C \xi_\al, & \quad \mbox{if} \ \alpha^* \neq 0 \\
c\xi_{\widetilde\alpha^*} \leq -\widetilde\Phi  \leq C \xi_{\widetilde\alpha^*}, & \quad \mbox{if} \ \widetilde\alpha^* \neq 0 \\
-C + \xi_0 \leq \Phi \ (\mbox{resp.} \ -\widetilde\Phi) \leq C + \xi_0, & \quad \mbox{if} \ \alpha^* = 0 \ (\mbox{resp.} \ \widetilde\alpha^* = 0),
\end{split}
\end{equation}
\end{prop}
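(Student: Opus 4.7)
My plan is to construct $\Phi$ and $\tilphi$ separately in each of the two operator classes, using the explicit structure of the $p$-Laplacian and invoking the fundamental-solution construction from \cite{ASS} for the Isaacs case.

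For the $p$-Laplacian I would look for radial solutions $u=u(|x|)$ of $-\Delta_p u=0$ in $\Rnpunct$. A direct computation reduces this to the ODE
\begin{equation*}
\bigl(r^{n-1}|u'|^{p-2}u'\bigr)'=0,
\end{equation*}
whose nontrivial solutions coincide, up to an additive and multiplicative constant, with $\radfun{\alpha}$ for $\alpha=(n-p)/(p-1)$. Since $n\ge 2$ one checks $\alpha>-1$, and \eqref{homo} holds with equality. Setting $\Phi:=\radfun{\alpha}$ gives the upward-pointing fundamental solution, and because $-\Delta_p$ is odd, the choice $\tilphi:=-\Phi$ supplies the downward-pointing one, so that $\tilal=\al$.

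For Isaacs operators the fundamental solutions are exactly those produced in \cite{ASS}: there, a nonconstant $\Phi\in C(\Rnpunct)$ with $-F(D^2\Phi)=0$ in the viscosity sense is built together with an exponent $\al=\al(F)\in(-1,\infty)$, and $\Phi$ is shown to be asymptotically homogeneous of order $-\al$ in the sense of \eqref{homo}. Applying the same construction to the dual Isaacs operator $\check F(M):=-F(-M)$, which is again uniformly elliptic, and then negating, yields $\tilphi$ with exponent $\tilal:=\al(\check F)$. The lower bound $\al>-1$ comes from sandwiching $F$ between the Pucci extremal operators $\Pucci$ and $\pucci$ (whose actions on the radial barriers $\radfun{\alpha}$ can be computed explicitly) and invoking the comparison principle (H1); in particular, ellipticity forces $\al$ and $\tilal$ to lie in a bounded subinterval of $(-1,\infty)$ depending only on $n,\elp,\Elp$.

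The only genuinely hard step is the Isaacs case, and it is entirely absorbed by citing \cite{ASS}. The heart of that argument is a scaling/compactness principle: a suitably normalized sequence of $F$-harmonic functions on dyadic annuli converges to a positively $-\al$-homogeneous profile, with $\al$ characterized variationally as the infimum of those $\alpha$ for which $\radfun{\alpha}$ serves as a supersolution on all of $\Rnpunct$. Once that homogeneity is in hand, the two-sided sandwich bounds in \eqref{homo} follow by applying the weak Harnack inequality of (H4) to $\Phi$ and $-\tilphi$ on dyadic annuli.
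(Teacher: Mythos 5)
Your proposal is correct and follows essentially the same route as the paper: an explicit radial ODE computation giving $\Phi=-\tilphi=\xi_{(n-p)/(p-1)}$ for the $p$-Laplacian, and a citation of \cite{ASS} (Theorem 1.2 there) for Isaacs operators, with the bound $\al,\tilal>-1$ coming from the Pucci sandwich. Your passage to $\tilphi$ via the dual operator $\check F(M)=-F(-M)$ is exactly the mechanism underlying the paper's pairing of $\alpha^*$ and $\widetilde\alpha^*$ (as in the displayed Pucci identities), so there is nothing further to add.
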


It is well-known (and can be easily checked) that the $p$-Laplacian satisfies the statement above with $\al=\tilal = (n-p)/(p-1)$ and $\Phi= -\widetilde\Phi =\xi_{(n-p)/(p-1)}$. For the reader interested in extending the results in this paper to more general quasilinear operators, we note that results on the existence and behavior of singular solutions of quasilinear equalities can be found in the classical work of Serrin \cite{S2}.

For Isaacs operators the question of existence, uniqueness, and properties of fundamental solutions was studied in detail in the recent work \cite{ASS}. In particular, the result above is a consequence of Theorem 1.2 in that paper. We remark that for nonlinear Isaacs operators we have $\al\neq \tilal$, except in very particular cases. This is due to the fact that Isaacs operators are not odd, in general. For the Pucci extremal operators, for example, we have
\begin{equation*}
\alpha^*\left( \pucci \right) = \widetilde\alpha^*\left( \Pucci \right) = \frac{\Lambda}{\lambda}(n-1) -1,
\end{equation*}
\begin{equation*}
\alpha^*\left( \Pucci \right) = \widetilde\alpha^*\left( \pucci \right) = \frac{\lambda}{\Lambda}(n-1) -1.
\end{equation*}

\medskip

Central for our method is the following quantitative (uniform) strong maximum principle. This result, while well-known (and fundamental to the regularity theory of linear elliptic equations developed by Krylov and Safonov, see Section 4 in \cite{K}) is surprisingly under-utilized in the theory of elliptic equations.

\begin{thm} \label{qsmp}
Let $Q$ denote the $p$-Laplacian or an Isaacs operator. Assume that $K$ and $A$ are compact subsets of a  bounded domain $\Omega\subseteq\R^n$, with $|A| > 0$. Suppose that $v$ is nonnegative in $\Omega$ and satisfies
\begin{equation*}
-Q[v] \geq \chi_A \quad \mbox{in} \ \Omega,
\end{equation*}
where $\chi_A$ denotes the characteristic function of $A$.
\begin{enumerate}\item Then there exists a constant $c_0=c_0(Q,|A|,\Omega,K)>0$ such that
\begin{equation*}
 v \geq  c_0\quad\mbox{on }\; K.
\end{equation*}
\item Suppose in addition that $v\ge \Phi\ge 0$ on $\partial \Omega$, where $\Phi$ is as in the previous theorem, and $0\not\in \Omega$. Then there exists a constant $c_0=c_0(Q,|A|,\Omega,K)>0$ such that
\begin{equation*}
 v \geq \Phi+ c_0\quad\mbox{on }\; K.
\end{equation*}
\end{enumerate}
\end{thm}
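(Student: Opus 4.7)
My plan for part (1) is to compare $v$ with an explicit barrier constructed by solving a Dirichlet problem for $-Q[w] = \chi_A$, and to secure uniformity in $|A|$ via a compactness argument. Concretely, I would choose a smooth subdomain $\Omega'$ with $A \cup K \subset\subset \Omega' \subset\subset \Omega$ and solve $-Q[w] = \chi_A$ in $\Omega'$ with $w = 0$ on $\partial\Omega'$ --- available by Perron's method in the viscosity case and by variational / monotone-operator methods in the $p$-Laplacian case. Comparing $w$ with $0$ via (H1) gives $w \ge 0$; comparing $v$ with $w$ via (H1) (using $v \ge 0 = w$ on $\partial\Omega'$ and $-Q[v] \ge \chi_A = -Q[w]$ in $\Omega'$) gives $v \ge w$ in $\Omega'$. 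Since $w \not\equiv 0$ (otherwise (H5) forces $-Q[0] = 0 \neq \chi_A$ on $A$, a contradiction), the strong maximum principle, which follows from (H4), yields $w > 0$ throughout $\Omega'$, and continuity together with compactness of $K$ produces $\inf_K w =: c_0 > 0$.

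The delicate point in (1) is that $c_0$ must depend on $A$ only through $|A|$. My plan here is to argue by contradiction: any sequence $(A_n, v_n)$ with $|A_n| \ge \delta$ and $\inf_K v_n \to 0$ should admit, via ABP- or Moser-type interior $L^\infty$ estimates and the stability of viscosity/weak solutions under local uniform convergence, a subsequential limit $v_\infty \ge 0$ which is a supersolution $-Q[v_\infty] \ge g$ for some nonnegative $g$ with $\int g \ge \delta$ (by weak-$\ast$ compactness of $\chi_{A_n}$), yet which vanishes at some $x_\ast \in K$. This contradicts the strong maximum principle and gives the desired uniformity.

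The hard part will be (2), because the nonlinearity of $Q$ prevents a direct reduction to (1) by subtracting $\Phi$ from $v$. My plan is to apply (1) to $u := v - \Phi$ after verifying that $u$ is a supersolution of an auxiliary problem still covered by (H1)--(H5). Because $0 \notin \Omega$, the function $\Phi$ is smooth in $\Omega$ and satisfies $-Q[\Phi] = 0$; combined with $v \ge \Phi$ on $\partial\Omega$ and (H1), this gives $v \ge \Phi$ in $\Omega$, so $u \ge 0$ in $\Omega$ and on $\partial\Omega$. For an Isaacs operator $F$, the ellipticity bound $\pucci(M-N) \le F(M) - F(N)$ and the standard test-function argument show that $u$ is a viscosity supersolution of $-\pucci(D^2 u) \ge \chi_A$ in $\Omega$; since $\pucci$ itself satisfies (H1)--(H5), applying part (1) with $Q$ replaced by $\pucci$ produces $u \ge c_0$ on $K$. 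For the $p$-Laplacian, the analogous step is to linearize along the segment from $\Phi$ to $v$ to obtain a linear uniformly elliptic divergence-form operator $L$ with $-L u \ge \chi_A$ on compact subsets of $\Omega$, to which part (1) again applies. The main technical subtlety is verifying that the linearized ellipticity constants remain uniform on $K$, but this is standard given the $C^{1,\alpha}$ regularity of $\Phi$ and the nonvanishing of $\nabla \Phi$ away from the origin.
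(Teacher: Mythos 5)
Your plan for part (1) follows the same broad route the paper takes for the $p$-Laplacian (barrier plus a compactness/contradiction argument to get uniformity in $|A|$), but the compactness step has a hole as written: ABP- and Moser-type interior $L^\infty$ estimates bound \emph{subsolutions} (or solutions) from above, not supersolutions, so an arbitrary sequence $v_n\ge 0$ with $-Q[v_n]\ge\chi_{A_n}$ and $\inf_K v_n\to 0$ need not be locally bounded, let alone locally uniformly convergent. The device that makes the argument work --- and which you actually build in your barrier step but never feed into the uniformity step --- is to replace $v_n$ by the solution $\tilde v_n$ of the Dirichlet problem $-Q[\tilde v_n]=\chi_{A_n}$ with boundary data $0$ (resp.\ $\Phi$); by (H1) one has $v_n\ge\tilde v_n$, and the $\tilde v_n$ enjoy uniform $C^{1,\alpha}$ bounds, so the limit passage goes through in the weak formulation (where the weak-$\ast$ convergence of $\chi_{A_n}$ causes no trouble) and the strong maximum principle yields the contradiction with $\inf_j|A_j|>0$. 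Note also that for Isaacs operators the paper avoids compactness entirely --- passing a merely weak-$\ast$ convergent right-hand side through a C-viscosity inequality is itself delicate --- by reducing to $-\pucci(D^2v)\ge\chi_A$ and invoking Krylov's quantitative estimate, which gives the lower bound directly in terms of $|A|$.

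The more serious gap is part (2) for the $p$-Laplacian. The proposed linearization of $|\xi|^{p-2}\xi$ along the segment $tDv+(1-t)D\Phi$ is not available: $v$ is only a $W^{1,p}_{\mathrm{loc}}$ supersolution, not $C^1$; the segment can vanish even where $D\Phi\neq0$ (take $tDv=-(1-t)D\Phi$), so for $p\neq2$ the ``linearized'' operator is degenerate or singular and no uniform ellipticity on $K$ follows from the regularity of $\Phi$ alone; and even if all of this were granted, the ellipticity constants --- hence the constant produced by applying part (1) to the operator $L$ --- would depend on $v$ itself, whereas the theorem requires $c_0=c_0(Q,|A|,\Omega,K)$ independent of $v$. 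The paper instead proves (ii) inside the same compactness argument: after the Dirichlet replacement the limit $v_0$ is a genuine $C^1$ solution, and Damascelli's strong comparison principle (which needs only $\nabla\Phi\neq0$, guaranteed by $0\notin\Omega$) gives $v_0\equiv\Phi$ or $v_0>\Phi$, leading to the same contradiction with $\inf_j|A_j|>0$. Your Isaacs treatment of (2) --- passing to $v-\Phi$, obtaining the Pucci inequality, and applying a quantitative result for $\pucci$ --- does match the paper's.
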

\begin{proof}
For an Isaacs operator, we have
$$
-\pucci(D^2v) \ge \chi_A \quad \mbox{and}\quad -\pucci(D^2(v-\Phi)) \ge \chi_A\quad\mbox{ in }\; \ \Omega,
$$
 and thus both (i) and (ii) are consequences of \cite[Chapter 4, Theorem 2]{K}, after an easy reduction to a linear equation (see for instance \cite[page 781]{QS3}).

Let us give a proof for the $p$-Laplacian. Suppose that (i) or (ii) is false so that there exists a sequence of compact subsets $A_j \subseteq \Omega$ with $\inf_j |A_j|>0$, and a sequence of positive functions $v_j$ such that $-\Delta_p v_j \ge \chi_{A_j}$ in $\Omega$ and
\begin{equation} \label{pass0}
\mbox{either} \quad  v_j(x_j) \to 0 \qquad \mbox{or} \quad v_j(x_j)-\Phi(x_j)\to 0 \quad \mbox{as} \ j \to \infty,
\end{equation}
for some sequence of points $x_j\in K$. Let $\tilde v_j$ solve the Dirichlet problem $$-\Delta_p \tilde v_j = \chi_{A_j}\;\mbox{ in }\;\Omega,\qquad \tilde v_j=0 \quad(\mbox{or }\; \tilde v_j = \Phi) \;\mbox{ on }\;\partial \Omega.$$
Then by Theorem \ref{wcp}, $v_j\ge \tilde v_j$ in $\Omega$, and so we can replace $v_j$ by $\tilde v_j$.  For all  $\varphi \in C^\infty_0(\Omega)$ we have
\begin{equation}\label{pass1}
\int_\Omega |Dv_j|^{p-2} Dv_j \cdot D\varphi \, dx = \int_{A_j} \varphi \,dx.
\end{equation}
According to the $C^{1,\alpha}$ estimates for the $p$-Laplace equation (see \cite{Tolk2,diBen,Lie}), we deduce that $v_j$ is bounded in $C^{1,\alpha}(\Omega)$ for some $\alpha>0$. Therefore we may extract a subsequence of $v_j$ which converges to a function $v_0$ in $C^1(\overline{\Omega})$. We may pass to limits in \EQ{pass1} to obtain $-\Delta_p v_0\ge 0$ in $\Omega$, as well as $v_0\ge 0$ on $\partial \Omega$ (or $v_0\ge \Phi$ on $\partial\Omega$).
By the strong maximum principle (see \cite{Tolk1}, or Theorem \ref{wh} below) we conclude that either $v_0\equiv 0$ or $v_0>0$ in $\Omega$. In the case (ii), by the strong comparison principle (see Theorem 1.4 in \cite{Dama}), either $v_0\equiv\Phi$ or $v_0>\Phi$ in $\Omega$. To apply the strong comparison principle here, we must note that the gradient of $\Phi= \xi_{(n-p)/(p-1)}$ never vanishes in $\Omega$.

By passing to limits in \EQ{pass0}, we obtain $v_0\equiv0$ in $\Omega$, or in the case (ii) $v_0\equiv\Phi$ in $\Omega$. In either case, $v_0$ is $p$-harmonic, so that a passage to the limit in \re{pass1} gives
\begin{equation*}
\lim_{j\to\infty} \int_{A_j} \varphi \,dx \to 0 \quad \mbox{as}\  j \to \infty,
\end{equation*}
for each $\varphi \in C^\infty_0(\Omega)$. By taking $\varphi \ge 1$ except on a very small subset of $\Omega$, this is easily seen to be a contradiction, according to $\inf_j |A_j| > 0$.
\end{proof}

The final ingredient of our proofs of the Liouville results is the weak Harnack inequality. For weak solutions of degenerate quasilinear equations it is due to Serrin \cite{S} and Trudinger \cite{T}. In the nondivergence framework it was proved by Krylov and Safonov for strong solutions (see \cite{K}), see also \cite{T2}, while for viscosity solutions of Isaacs equations it was obtained by Caffarelli \cite{C};  see also Theorem 4.8 in \cite{CC}.
\begin{thm}\label{wh}
Let $u\ge 0$ and $-Q[u] \geq 0 $ in a bounded domain $\Omega$, where $Q$ is the $p$-Laplacian or an Isaacs operator. Then there exists $\gamma>0$ depending only on $Q$ and $n$, such that for each compact $K\subset \Omega$ we have
\begin{equation*}
\left( \dashint_{K} u^\gamma\, dx \right)^{1/\gamma} \leq C \inf_{K} u,
\end{equation*}
for some positive constant $C$, which depends only on $n,Q,K,\Omega$.
\end{thm}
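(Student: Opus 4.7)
The plan is to treat the two operator classes separately, since they rely on fundamentally different machinery, and in each case first to prove the inequality on a pair of concentric balls $B_r \subset B_{2r} \subset \Omega$. The general compact $K \subset \subset \Omega$ then follows by a chain-of-balls covering argument: pick $r_0 = \dist(K,\partial\Omega)/4$, cover $K$ by finitely many balls of this radius, apply the ball version on each overlapping pair, and absorb the finite number of resulting estimates into the constant $C$.

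For the $p$-Laplacian I would follow Moser's scheme in the negative-exponent regime. Given a standard cutoff $\eta \in C^\infty_0(B_{2r})$ with $\eta \equiv 1$ on $B_r$ and $|D\eta| \leq C/r$, test the weak inequality against $\varphi = \eta^p(u+\ep)^\beta$ with $\beta < 0$; expand $D\varphi$, absorb the cross term using Young's inequality, and arrive at a Caccioppoli-type estimate
\begin{equation*}
\int_{B_{2r}} \eta^p \bigl| D(u+\ep)^{(\beta+p-1)/p} \bigr|^p \, dx \leq C(\beta) \int_{B_{2r}} |D\eta|^p (u+\ep)^{\beta+p-1} \, dx.
\end{equation*}
The Sobolev inequality converts this into a reverse Hölder inequality between negative $L^q$-norms of $u+\ep$ on concentric balls, and iterating on a geometric sequence of radii pushes the exponent to $-\infty$, yielding a bound of the form $\inf_{B_r}(u+\ep) \geq c \bigl( \dashint_{B_{3r/2}} (u+\ep)^{-s} \bigr)^{-1/s}$ for every $s>0$. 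To bridge to a positive power I would separately test against $\eta^p(u+\ep)^{1-p}$ to obtain a BMO bound on $\log(u+\ep)$, then invoke John--Nirenberg to conclude $\dashint (u+\ep)^\gamma \cdot \dashint (u+\ep)^{-\gamma} \leq C$ for some small universal $\gamma > 0$. Letting $\ep \downarrow 0$ closes the argument.

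For an Isaacs operator, the inequality $-Q[u]\ge 0$ forces $-\pucci(D^2 u) \ge 0$ in the viscosity sense by \EQ{unifel}, so the Krylov--Safonov machinery in its viscosity form applies. Its cornerstone is Caffarelli's ABP estimate for viscosity solutions, which together with a paraboloid-touching barrier yields the standard $L^\ep$-lemma: there exist universal $M>1$ and $\mu \in (0,1)$ such that if $u\ge 0$ in a large enough ball and $\inf_{B_{1/4}} u \le 1$, then $|\{u\le M\}\cap B_1| \ge \mu |B_1|$. A dyadic Calder\'on--Zygmund cube decomposition iterates this lemma into the power-law decay $|\{u>t\}\cap B_1| \le C t^{-\gamma}$, which by Cavalieri's principle gives the desired $L^\gamma$-mean bound for any $\gamma$ smaller than the universal exponent the iteration produces.

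The main obstacle is not conceptual but technical: in the quasilinear case, test functions involving singular powers of $u$ must be legitimized through $\ep$-regularization together with the $C^{1,\alpha}$ regularity already invoked in the proof of \THM{qsmp}; in the nondivergence case, one needs the viscosity version of ABP and the delicate measure-theoretic iteration of Krylov--Safonov--Caffarelli. Both have been worked out in complete detail in the references cited in the excerpt, so the proposal is simply to invoke those proofs through a ball-to-compact covering reduction.
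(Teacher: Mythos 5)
The paper offers no proof of this theorem: it simply states it and cites the literature, attributing the quasilinear case to Serrin and Trudinger and the viscosity (Isaacs) case to Krylov--Safonov and Caffarelli. Your sketch accurately reconstructs the Moser-iteration/John--Nirenberg argument behind the first and the ABP/Calder\'on--Zygmund measure-decay argument behind the second, and then defers the technical details to the very same references, so it is essentially the same approach as the paper.
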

\begin{remark} In some cases the use of this theorem can be avoided, at the expense of strengthening the hypotheses on $f$, see for instance Remark 2.7 after the proof of Theorem \ref{slthm}.
\end{remark}
\begin{remark} We actually use only the following weaker result:  {\it for each $\gamma<1$ there exists a constant $\bar C = \bar C(n,Q,\gamma) > 1$ such that for any nonnegative weak supersolution $u$ of $-Q[u]\geq 0$ in the annulus $B_3\setminus B_{1/2}$, and any $x_0 \in B_2\setminus B_1$,}
\begin{equation*}
\left| \left\{ u \leq \bar C u(x_0) \right\} \cap (B_2\setminus B_1) \right| \geq \gamma \left|B_2 \setminus B_1\right|.
\end{equation*}
This is a consequence of the ``very weak" Harnack inequality, which states that for every $\gamma>0$, there exists a constant $\bar c = \bar c(n,Q,\gamma,\Omega, K) \in(0, 1)$ such that for any nonnegative weak supersolution $u$ of $-Q[u]\ge0$ in $\Omega$,
$$
\left| \left\{ u \ge1 \right\} \cap K \right| \geq \gamma\left|K\right| \qquad\mbox{implies}\qquad u\ge \bar c\;\mbox{ in }K.$$
This fact, though a consequence of the weak Harnack inequality, is interesting in its own right. For instance, it admits a proof which is considerably simpler than the proof of the weak Harnack inequality while being sufficient to imply the H\"older estimates for solutions of $-Q[u]=0$.
\end{remark}

{\it The reader is advised that in  the rest of the paper only properties (H1)-(H5) will be used. In other words, the Liouville theorems stated in Section \ref{liouville} are proved for any $Q$ such that the inequalities $-Q[u]\geq (\leq) f(x,u)$ can be interpreted in such a way that properties (H1)--(H5), or  a subset of them, are satisfied.}

\subsection{Properties of minima of supersolutions on annuli}

Our method for proving nonexistence theorems is based on the study of minima of supersolutions in annuli. In this section we obtain some preliminary estimates by comparing supersolutions of $-Q[u] \geq 0$ with the fundamental solutions of $Q$ from property (H2).

Note that, given $r_0>0$,  $\Phi$ and $\widetilde \Phi$ can be assumed to never vanish in $\rn\setminus B_{r_0}$, since if needed we simply add or subtract a constant from these functions. With this in mind, let us define the quantities
\begin{equation} \label{defrrho}
m(r) : = \inf_{B_{2r} \setminus B_r} u, \qquad \rho(r) := \inf_{B_{2r} \setminus B_r} \frac{u}{\Phi}, \qquad  \widetilde \rho(r):= \inf_{B_{2r} \setminus B_r} \frac{u}{\widetilde \Phi}.
\end{equation}

\begin{lem} \label{mbounds} Assume $Q$ satisfies (H1) and (H2).
Suppose that  $u\geq 0$ satisfies
\begin{equation*}
-Q[u] \geq 0 \quad \mbox{in} \  \R^n\setminus B_{r_0}.
\end{equation*}
Then for some $r_1> r_0$,
\begin{equation} \label{fntaub}
\begin{cases}
r\mapsto \rho(r) \ \mbox{is nondecreasing on} \ (r_0,\infty), & \mbox{if } \; \alpha^* > 0, \\
r\mapsto m(r) \ \mbox{is nondecreasing on} \ (r_0,\infty), & \mbox{if } \; \alpha^* \leq 0,\\
r\mapsto m(r) \ \mbox{is bounded on} \ (r_1,\infty), & \mbox{if } \; \widetilde \alpha^* > 0,\\
r\mapsto \widetilde \rho(r) \ \mbox{is bounded on} \ (r_1,\infty), & \mbox{if } \; \widetilde \alpha^* \leq 0.\\
\end{cases}
\end{equation}
\end{lem}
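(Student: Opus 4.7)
All four conclusions reduce to constructing an appropriate barrier from $\Phi$ or $\widetilde\Phi$ and invoking the weak comparison principle (H1); positive homogeneity plus constant-invariance (H5) are what ensure each barrier is a $Q$-solution, while the quantitative two-sided bounds in (H2) control its sign and growth at the relevant boundary. The sign of $\alpha^*$ (resp.\ $\widetilde\alpha^*$) dictates whether $\Phi$ (resp.\ $\widetilde\Phi$) decays or blows up at infinity, which in turn dictates whether the comparison should be carried out on an outer annulus $B_R\setminus B_r$ with $R\to\infty$ (yielding monotonicity) or on an inner annulus $B_r\setminus B_{r_0}$ with $r\to\infty$ (yielding boundedness via evaluation at a fixed point).

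For the monotonicity assertions I would argue as in the proof of Theorem~\ref{slthm}. If $\alpha^*>0$, then $\Phi>0$ and $\Phi(x)\to 0$, so on $\partial B_R$ one has $\rho(r)\Phi\le\varepsilon\le u+\varepsilon$, while on $\partial B_r$ one has $u\ge\rho(r)\Phi$ by definition of $\rho(r)$; (H1) applied on $B_R\setminus B_r$, then letting $R\to\infty$ and $\varepsilon\to 0$, gives $u\ge\rho(r)\Phi$ on $\R^n\setminus B_r$, whence $\rho(R)\ge\rho(r)$. If $\alpha^*\le 0$, then (after a constant shift) $\Phi\to-\infty$, so I would use the two-parameter barrier $w_{R,\varepsilon}=(m(r)-\varepsilon)+b_R\Phi$, choosing $b_R>0$ large enough that $w_{R,\varepsilon}\le 0$ on $\partial B_R$ (using the upper bound on $\Phi$ from (H2)) and small enough that $w_{R,\varepsilon}\le m(r)$ on $\partial B_r$. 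Note $b_R\to 0$ as $R\to\infty$, so $w_{R,\varepsilon}\to m(r)-\varepsilon$ pointwise; $w_{R,\varepsilon}$ is a solution by (H5); comparison followed by $R\to\infty$, $\varepsilon\to 0$ gives $u\ge m(r)$ on $\R^n\setminus B_r$.

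For the boundedness assertions I would instead work on the inner annulus $B_r\setminus B_{r_0}$, using shifted multiples of $\widetilde\Phi$ as barriers. When $\widetilde\alpha^*>0$, $\widetilde\Phi<0$ and $\widetilde\Phi\to 0$, so setting $\widetilde\Phi^*=\max_{\partial B_{r_0}}\widetilde\Phi$ and $w_r=m(r)\bigl(1-\widetilde\Phi/\widetilde\Phi^*\bigr)$ produces a function that is $\le 0$ on $\partial B_{r_0}$ (since $\widetilde\Phi/\widetilde\Phi^*\ge 1$ there), $\le m(r)$ on $\partial B_r$ for $r$ large (since $\widetilde\Phi\le 0$ there), and a solution by (H5) with the nonnegative coefficient $-m(r)/\widetilde\Phi^*$; comparison gives $u\ge w_r$, and evaluating at a fixed $x_*$ with $|\widetilde\Phi(x_*)|$ much smaller than $|\widetilde\Phi^*|$ (available because $\widetilde\Phi\to 0$, by (H2)) yields $m(r)\le 2u(x_*)$. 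When $\widetilde\alpha^*\le 0$, $\widetilde\Phi>0$ and $\widetilde\Phi\to+\infty$, so the dual barrier $w_r=\widetilde\rho(r)(\widetilde\Phi-\widetilde\Phi_M)$ with $\widetilde\Phi_M=\max_{\partial B_{r_0}}\widetilde\Phi$ is $\le 0$ on $\partial B_{r_0}$, $\le \widetilde\rho(r)\widetilde\Phi\le u$ on $\partial B_r$, and a solution by (H5); evaluating at a fixed $x_*$ with $\widetilde\Phi(x_*)-\widetilde\Phi_M\ge 1$ (available because $\widetilde\Phi\to\infty$, by (H2)) gives $\widetilde\rho(r)\le u(x_*)$.

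The main technical nuisance is that for fully nonlinear $Q$ the fundamental solutions $\Phi,\widetilde\Phi$ need not be radial—only comparable to $\xi_{\alpha^*},\xi_{\widetilde\alpha^*}$ in the sense of (H2)—so statements like ``$\Phi$ is small on $\partial B_R$'' or ``$\widetilde\Phi$ exceeds its boundary maximum at far-away points'' must be extracted from the one-sided asymptotic bounds rather than from radial formulas. A related subtle point is that each barrier is of the form (constant)+(nonnegative multiple of $\Phi$ or $\widetilde\Phi$); the nonnegativity of the multiple, needed to apply the $t^{p-1}$-homogeneity in (H5), is automatic because $m(r),\rho(r),\widetilde\rho(r)\ge 0$ and by the signs of $\Phi,\widetilde\Phi$ forced by (H2) in each case.
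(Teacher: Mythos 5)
Your proposal is correct and follows essentially the same route as the paper's own proof: comparison via (H1) against barriers of the form constant plus a nonnegative multiple of $\Phi$ or $\widetilde\Phi$, carried out on outer annuli $B_R\setminus B_r$ with $R\to\infty$ for the two monotonicity cases and on inner annuli $B_r\setminus B_{r_0}$ for the two boundedness cases, with the sign and growth information drawn from \eqref{homo}. The only cosmetic deviations (coupling the multiplier $b_R$ to $R$ rather than fixing $\delta$ and sending $R\to\infty$ first, and evaluating at a fixed point $x_*$ instead of taking the infimum over a fixed annulus $B_{2kr_0}\setminus B_{kr_0}$) do not change the substance, and your explicit appeal to (H5) to certify that the barriers solve $-Q[\cdot]=0$ is exactly what the paper uses implicitly.
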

\begin{proof}
First consider the case  $\alpha^* > 0$. Then $\Phi > 0$ and $\Phi(x) \rightarrow 0$ as $|x|\to \infty$. Observe that for every $r>r_0$ and $\ep > 0$, we may choose $R> r$ large enough that
\begin{equation*}
u(x) + \ep \geq \rho(r) \Phi(x) \quad \mbox{on} \ \partial \left( B_R \setminus B_r \right).
\end{equation*}
By the weak comparison principle,
\begin{equation*}
u(x) + \ep \geq \rho(r) \Phi(x) \quad \mbox{in} \ B_R \setminus B_r.
\end{equation*}
Sending $R\to \infty$ and then $\ep \to 0$, we discover that
\begin{equation*}
u(x) \geq \rho(r) \Phi(x) \quad \mbox{in} \ \R^n \setminus B_r,\qquad\mbox{hence}\qquad \rho(r) = \inf_{\rn \setminus B_r} \frac{u}{\Phi}.
\end{equation*}
The desired monotonicity of $r\mapsto \rho(r)$ follows.

Next, suppose that $\alpha^*\leq 0$. Recall that  $\Phi(x) < 0$ for $|x|\geq r_0$ and $\Phi(x) \rightarrow -\infty$ as $|x|\to \infty$. Thus for every $r>r_0$ and $\delta > 0$, we can find $R> 0$ so large that
\begin{equation*}
u \geq m(r) + \delta \Phi \quad \mbox{on} \ \partial \left( B_R \setminus B_r \right).
\end{equation*}
Using the weak comparison principle and sending $R\to \infty$, we deduce that
\begin{equation*}
u \geq m(r)  +\delta \Phi \quad \mbox{in} \ \R^n \setminus B_r.
\end{equation*}
Now let $\delta \to 0$ to obtain $m(r) : = \inf_{\rn \setminus B_r} u$, and  the monotonicity of $r\mapsto m(r)$ on the interval $(r_0,\infty)$.

Suppose that $\widetilde\alpha^* > 0$. Then $\widetilde \Phi< 0$, and we may normalize $\widetilde \Phi$ so that
\begin{equation*}
\max_{\partial B_{r_0}} \widetilde \Phi = -1.
\end{equation*}
 For any $r>r_0$, we clearly have
\begin{equation*}
u(x) \geq \left( \inf_{B_{2r} \setminus B_r} u \right) \left( 1 + \widetilde \Phi(x) \right) \quad \mbox{for each} \ x \in \partial \left( B_r\setminus B_{r_0} \right).
\end{equation*}
By the weak comparison principle,
\begin{equation*}
u \geq \left( \inf_{B_{2r} \setminus B_r} u \right) \left( 1 + \widetilde \Phi \right) \quad \mbox{in} \ B_r\setminus B_{r_0},
\end{equation*}
for each $r>r_0$. Recalling \re{homo}, if $k$ is fixed  sufficiently large so that
\begin{equation*}
\inf_{B_{2kr_0}\setminus B_{kr_0}} u \geq \left( \inf_{B_{2r} \setminus B_r} u \right) \left( 1 - Ck^{-\widetilde \alpha^*} \right)\geq \frac{1}{2}\left( \inf_{B_{2r} \setminus B_r} u \right) ,
\end{equation*}
we obtain the third statement in \EQ{fntaub}, for $r\ge kr_0$.

Finally, we consider the case  $\widetilde \alpha^* \leq 0$.  Observe that
\begin{equation*}
u(x) \geq \widetilde \rho(r) \left( \widetilde \Phi(x) - \max_{\partial B_{r_0}} \widetilde \Phi \right) \quad \mbox{for each} \ x \in \partial \left( B_r\setminus B_{r_0} \right),
\end{equation*}
for any $r> r_0$. By the weak comparison principle,
\begin{equation*}
u(x) \geq \widetilde \rho(r) \left( \widetilde \Phi(x) - \max_{\partial B_{r_0}} \widetilde \Phi \right) \quad \mbox{in} \ B_r\setminus B_{r_0}.
\end{equation*}
Hence
\begin{equation*}
\inf_{B_{2kr_0}\setminus B_{kr_0}} u \geq \widetilde \rho(r) \left( \inf_{B_{2kr_0}\setminus B_{kr_0}} \widetilde \Phi - \max_{\partial B_{r_0}} \widetilde \Phi \right).
\end{equation*}
By fixing $k>1$ sufficiently large so that the quantity in the last parentheses is larger than one (recall we are in a case when $\widetilde\Phi(x)\to \infty$ as $|x|\to \infty$), the second part of \EQ{fntaub} follows. The lemma is proved.
\end{proof}

The following bounds on $m(r)$ are an immediate consequence of \re{fntaub}.

\begin{lem} \label{mbounds_1}
Assume $Q$ satisfies (H1) and (H2). Suppose that $r_0 > 0$ and $u\geq 0$ satisfy
\begin{equation*}
-Q[u] \geq 0 \quad \mbox{in} \  \R^n\setminus B_{r_0}.
\end{equation*}
Then for some $c,C>0$ depending on $Q$, $n$, $u$, and $r_0$, but not on $r$,
\begin{equation} \label{mbds}
\begin{cases}
m(r)\ge cr^{-\alpha^*} & \mbox{if} \ \alpha^* > 0, \\
m(r)\ge c & \mbox{if} \ \alpha^* \leq 0,\\
\end{cases}\qquad\mbox{and}\qquad
\begin{cases}
m(r)\le C & \mbox{if} \ \widetilde \alpha^* > 0,\\
m(r)\le C\log r & \mbox{if} \ \widetilde \alpha^* = 0,\\
m(r)\le Cr^{-\widetilde\alpha^*} & \mbox{if} \ \widetilde \alpha^* < 0.\\
\end{cases}
\end{equation}
\end{lem}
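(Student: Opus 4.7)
The plan is to turn the qualitative monotonicity/boundedness established in Lemma~\ref{mbounds} into the quantitative estimates \eqref{mbds} by combining it with the pointwise asymptotics for $\Phi$ and $\widetilde\Phi$ recorded in \eqref{homo} and \eqref{eq:radfun}. To guarantee that the various lower bounds are genuinely positive I fix a reference radius $r_2 > r_0$ and invoke the weak Harnack inequality (Theorem~\ref{wh}) to obtain $c_0 := \inf_{B_{2r_2}\setminus B_{r_2}} u > 0$ (the conclusion being vacuous if $u\equiv 0$).

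For the lower bounds I split on the sign of $\alpha^*$. If $\alpha^* > 0$, then $\Phi > 0$ on $\R^n\setminus B_{r_0}$ and $\inf_{B_{2r}\setminus B_r}\Phi \ge c(2r)^{-\alpha^*}$ by \eqref{homo}, while by Lemma~\ref{mbounds} the map $r\mapsto \rho(r)$ is nondecreasing, so
\[
m(r) \;\ge\; \rho(r)\inf_{B_{2r}\setminus B_r}\Phi \;\ge\; \rho(r_2)\,c\,(2r)^{-\alpha^*} \;\ge\; c'\,r^{-\alpha^*},
\]
with $\rho(r_2)>0$ since $u\ge c_0$ and $\Phi$ is bounded above on the compact annulus $B_{2r_2}\setminus B_{r_2}$. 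If $\alpha^* \le 0$, Lemma~\ref{mbounds} says that $m$ itself is nondecreasing on $(r_0,\infty)$, so $m(r) \ge m(r_2) = c_0 > 0$, which together with the trivial bound on the compact range $(r_0,r_2]$ absorbs the small-$r$ regime.

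For the upper bounds I split on the sign of $\widetilde\alpha^*$. When $\widetilde\alpha^* > 0$, the bound $m(r)\le C$ is immediate from Lemma~\ref{mbounds} (after absorbing values on the compact range into the constant via local boundedness of $u$). When $\widetilde\alpha^* \le 0$, a direct reading of \eqref{homo} together with \eqref{eq:radfun} shows that $\widetilde\Phi > 0$ in a neighborhood of infinity and
\[
\sup_{B_{2r}\setminus B_r}\widetilde\Phi \;\le\; \begin{cases} C\log r & \text{if } \widetilde\alpha^* = 0, \\ C\,r^{-\widetilde\alpha^*} & \text{if } \widetilde\alpha^* < 0. \end{cases}
\]
Picking, for any $\ep > 0$, a near-minimizer $x_r \in B_{2r}\setminus B_r$ of $u/\widetilde\Phi$ with $u(x_r)/\widetilde\Phi(x_r) \le \widetilde\rho(r)+\ep$, and using that $u,\widetilde\Phi\ge 0$ on the annulus, one obtains
\[
m(r) \;\le\; u(x_r) \;\le\; (\widetilde\rho(r)+\ep)\,\widetilde\Phi(x_r) \;\le\; (\widetilde\rho(r)+\ep)\sup_{B_{2r}\setminus B_r}\widetilde\Phi.
\]
Sending $\ep \to 0$ and using the boundedness of $\widetilde\rho$ from Lemma~\ref{mbounds} yields the claimed upper bounds.

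The whole argument is essentially bookkeeping: the substantive work is already contained in Lemma~\ref{mbounds}, and all that remains is to translate its conclusions into scale-explicit form using \eqref{homo}. The only genuine subtlety is ensuring the strict positivity of the reference values $\rho(r_2)$ and $m(r_2)$, which is where the weak Harnack inequality (H4) enters; a secondary point is carefully tracking the signs of $\Phi$ and $\widetilde\Phi$ in each regime of $\alpha^*$ and $\widetilde\alpha^*$ so that the comparison $m(r) \le \widetilde\rho(r)\sup\widetilde\Phi$ goes in the right direction.
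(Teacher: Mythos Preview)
Your proof is correct and follows the paper's approach exactly: the paper simply states that the bounds are ``an immediate consequence'' of Lemma~\ref{mbounds}, and you have spelled out precisely how this goes by combining the monotonicity/boundedness there with the asymptotics \eqref{homo}. One minor point: your invocation of the weak Harnack inequality (H4) to secure strict positivity of $m(r_2)$ and $\rho(r_2)$ goes beyond the lemma's stated hypotheses (H1)--(H2); the paper does not address this, presumably because in every application the lemma is used for a strictly positive supersolution, so the reference values are automatically positive.
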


Finally, we observe that in some situations the map $r\mapsto m(r)$ is an nonincreasing function, in contrast with one of the conclusions of Lemma \ref{mbounds}.

\begin{lem} \label{mbounds_new} Assume $Q$ satisfies (H1) and (H2).
Suppose  $u\geq 0$ satisfies either
\begin{equation*}
-Q[u] \geq 0 \quad \mbox{in} \ B_R\qquad \mbox{or}\qquad \left\{ \begin{array}{l} -Q[u]\geq 0 \quad \mbox{in}\  B_R\setminus \{0\},\\
\widetilde \alpha^*\geq 0,\end{array}\right.
\end{equation*}
for some $R>0$. Then $r\to m(r)$ is nonincreasing on $(0,R)$.  \end{lem}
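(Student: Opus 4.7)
The plan is to reduce both cases to a single intermediate statement: for each $r_2 \in (0, R/2)$, the inequality $u \geq m(r_2)$ holds on the entire punctured ball $B_{2r_2}\setminus\{0\}$. Granting this, the monotonicity is immediate from set inclusion: for $0 < r_1 \leq r_2$ the annulus $B_{2r_1}\setminus B_{r_1}$ sits inside $B_{2r_2}\setminus\{0\}$, so $m(r_1) = \inf_{B_{2r_1}\setminus B_{r_1}} u \geq m(r_2)$. Since $u \geq m(r_2)$ is automatic on the outer annulus $B_{2r_2}\setminus B_{r_2}$, the task is to push this inequality into the inner ball $B_{r_2}\setminus\{0\}$.

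In Case 1, where $-Q[u]\ge 0$ holds across the origin, this is a direct application of the weak minimum principle: applying (H1) to $u$ and the constant $m(r_2)$ on $B_{r_2}$ — a solution by the first part of (H5), since $Q$ annihilates constants — and noting that $u \geq m(r_2)$ holds on $\partial B_{r_2}$ in the appropriate essential sense (the sphere $\partial B_{r_2}$ sits in the closure of the outer annulus), one concludes $u \geq m(r_2)$ throughout $B_{r_2}$.

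In Case 2, the possible singularity of $u$ at the origin prevents a direct comparison with a constant, and the argument instead uses $\widetilde\Phi$ as a barrier at the origin. The hypothesis $\widetilde\alpha^* \geq 0$ combined with \EQ{homo} and the definition of $\xi_\alpha$ guarantees $\widetilde\Phi(x) \to -\infty$ as $|x|\to 0$ (either like $-|x|^{-\widetilde\alpha^*}$ or like $\log|x|$). After shifting $\widetilde\Phi$ by a sufficiently large constant — permitted by the first part of (H5) — so that $\widetilde\Phi \leq 0$ on $\overline{B_{r_2}}\setminus\{0\}$, I would fix $\epsilon > 0$ and consider $v_\epsilon := m(r_2) + \epsilon\widetilde\Phi$. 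By the translation and homogeneity parts of (H5) together with $-Q[\widetilde\Phi] = 0$, one has $-Q[v_\epsilon] = 0$ in $B_{r_2}\setminus\{0\}$. On $\partial B_{r_2}$, $v_\epsilon \leq m(r_2) \leq u$ by the sign of $\widetilde\Phi$ and the definition of $m(r_2)$; on $\partial B_\delta$ for $\delta$ sufficiently small (depending on $\epsilon$), the blow-up $\widetilde\Phi\to -\infty$ gives $v_\epsilon \leq 0 \leq u$. The weak comparison principle (H1) on the annulus $B_{r_2}\setminus\overline{B_\delta}$ then yields $u \geq v_\epsilon$ there, and sending first $\delta\to 0$ and then $\epsilon\to 0$ recovers $u \geq m(r_2)$ on $B_{r_2}\setminus\{0\}$.

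The main obstacle I anticipate is the interplay between the constant normalisation of $\widetilde\Phi$ and the scaling by $\epsilon$, both of which must preserve its status as a solution; this is where the two parts of (H5) are used simultaneously. A minor technicality is the interpretation of the boundary inequalities when $u$ is only a weak or viscosity supersolution, but this is handled as elsewhere in the paper by reading every infimum as an essential infimum and relying on (H1) in the form already used throughout Section~\ref{operators}.
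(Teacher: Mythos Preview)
Your proposal is correct and follows essentially the same argument as the paper: in both cases one shows $u\ge m(r)$ on the whole punctured ball of radius $r$ (resp.\ $2r_2$), and for the punctured-ball case one does this by comparing $u$ with $m(r)+\delta\,\widetilde\Phi$ on shrinking annuli, using $\widetilde\Phi\to-\infty$ at the origin when $\widetilde\alpha^*\ge 0$, then letting $\delta\to 0$. Your explicit remark that the translation and positive-homogeneity parts of (H5) are what make $m(r_2)+\epsilon\widetilde\Phi$ a solution is in fact more careful than the paper, which only lists (H1)--(H2) in the hypothesis but tacitly uses the same facts.
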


\begin{proof} The first statement is obvious, since the maximum principle implies that $m(r)=\inf_{B_r}u$. Let us prove the second statement. By subtracting a constant from $\widetilde\Phi$ if necessary, we may assume that $\widetilde \Phi < 0$ in $B_R\setminus \{ 0 \}$. Since $\widetilde \Phi(x) \to -\infty$ as $x \to 0$, for every $0 < r <R$ and $\delta > 0$, there exists $0 < \ep < r$ small enough that
\begin{equation*}
u \geq m(r) + \delta \widetilde \Phi \quad \mbox{on} \ \partial \left( B_r \setminus B_\ep \right).
\end{equation*}
By the weak comparison principle,
\begin{equation*}
u \geq m(r) + \delta \widetilde \Phi \quad \mbox{in} \ B_r \setminus B_\ep.
\end{equation*}
Sending $\ep \to 0$ and then $\delta \to 0$, we deduce that
\begin{equation*}
u \geq m(r) \quad \mbox{in} \ B_r \setminus \{ 0 \},\qquad\mbox{hence}\qquad m(r) = \inf_{B_r \setminus \{ 0 \}}u.
\end{equation*}
The monotonicity of the map $r\mapsto m(r)$ on the interval $(0,R)$ follows.
\end{proof}

\section{The Liouville Theorems}\label{liouville}

This section contains our main results on the nonexistence of solutions of elliptic inequalities. As we pointed out in the previous section, all results will be announced under some or all of the hypotheses (H1)-(H5) on the elliptic operator $Q$. These properties hold for weak solutions of quasilinear inequalities of $p$-Laplacian type, as well as for viscosity supersolutions of fully nonlinear equations of Isaacs type. Therefore all the following results will be valid for such supersolutions and inequalities.

We pursue with this choice of exposition to emphasize the independence of the method on the particular type of operators and weak solutions that we consider. We believe that modifications of our arguments will yield analogous results for inequalities involving elliptic operators with lower order terms as well as other types of operators, for instance mean-curvature-type operators, nondivergence form extensions of the $p$-Laplacian studied by Birindelli and Demengel \cite{BD}, nonlinear integral operators (c.f. \cite{CS}), and so on.

\subsection{Statement of the main result}
We begin by providing a brief overview of the main ideas in the proof of our main result, Theorem \ref{FNL}, which will also motivate the complicated hypotheses (f1)-(f4), below. Assume that we have a positive solution $u>0$ of the inequality
\begin{equation} \label{FNLeq}
-Q[u] \geq f(u,x) \quad\mbox{in} \ \R^n\setminus B_{r_0}.
\end{equation}
Setting $u_r(x)= u(rx)$ for $r\ge 2r_0$ and using (H5), we see that $u_r$ is a solution of
\begin{equation*}
-Q[u_r]\ge r^pf(u_r,rx)\ge0 \quad\mbox{in} \ \R^n \setminus B_{1/2},
\end{equation*}
where $p> 1$ is as in (H5). Then property (H4) implies that the set
\begin{equation*}
A_r := \{ x\in  B_2\setminus B_1 :  m(r)\leq u_r(x)\leq \bar C m(r)\}, \quad r> 2r_0,
\end{equation*}
is such that $|A_r|\ge (1/2)|B_2\setminus B_1|$, provided $\bar C >1$ is large enough. Then by (H3),
\begin{equation}\label{parteq1}
m(r) \geq \bar c r^{p}  \inf_{m(r)\le s\le \bar Cm(r), \ x\in A_r} f(s,rx).
\end{equation}
So it remains to discover hypotheses on $f$ which imply that \EQ{parteq1} is incompatible with the bounds on $m(r)$ obtained from Lemma \ref{mbounds_1}. First, if the simple nondegeneracy condition (f2) below is in force, then we immediately obtain from \EQ{parteq1} that either $m(r) \to 0$ or else $m(r) \to \infty$ as $r\to \infty$. We then impose conditions on $f$ to rule out both of these alternatives; these are, respectively, (f3) and (f4) below. In light of \EQ{parteq1}, we see that the former need concern only the behavior of $f(s,x)$ near $s=0$ and $|x|=\infty$, and the latter the behavior of $f(s,x)$ near $s=\infty$ and $|x|=\infty$.

\medskip

Our precise hypotheses on the function $f=f(s,x)$ are as follows:

\begin{enumerate}
\item[(f1)] $f: (0,\infty) \times (\R^n \setminus B_{r_0}) \to (0,\infty)$ is continuous;

\item[(f2)] $|x|^pf(s,x) \to \infty$ as $|x|\to \infty$ locally uniformly in $s\in (0,\infty)$;

\item[(f3)] either $\alpha^* \leq 0$, or else $\alpha^* > 0$ and there exists a constant $\mu > 0$ such that if we define
\begin{equation*}
\Psi_k(x) := |x|^p  \displaystyle\inf_{k\Phi(x)\le s\le \mu}s^{1-p} f(s,x) \quad\mbox{and}\quad h(k):=  \liminf_{|x|\to \infty} \Psi_k(x),
\end{equation*}
then $0 < h(k) \leq \infty$ for each $k>0$, and
\begin{equation*}
\lim_{k\to \infty} h(k) =\infty.
\end{equation*}

\item[(f4)] either $\widetilde \alpha^* > 0$, or else $\widetilde \alpha^*\leq 0$ and there exists a constant $\mu > 0$ such that  if we define
\begin{equation*}
\widetilde\Psi_k(x) := |x|^p \inf_{\mu\le s\le k\widetilde\Phi(x)} s^{1-p} f(s,x) \quad\mbox{and}\quad \widetilde h(k):=  \liminf_{|x|\to \infty} \widetilde\Psi_k(x),
\end{equation*}
then $0 < \widetilde h(k) \leq \infty$ for each $k>0$, and
\begin{equation*}
 \lim_{k\to 0} \widetilde h(k) =\infty.
\end{equation*}
\end{enumerate}
Observe that (f3) is void if $\alpha^* \leq 0$, while (f4) is void in the case  $\widetilde \alpha^* > 0$. We recall that for the $p$-Laplacian operator we have
\begin{equation*}
\alpha^* = \widetilde\alpha^* = \frac{n-p}{p-1},
\end{equation*}
while for an Isaacs operator with ellipticity $\Lambda/\lambda$, in general $\alpha^*\neq \widetilde\alpha^*$, and each of $\alpha^*$ and $\widetilde\alpha^*$ can be any number in the interval
\begin{equation*}
\left[ \frac{\lambda}{\Lambda}(n-1) - 1 , \frac{\Lambda}{\lambda}(n-1) - 1 \right].
\end{equation*}

Our main result is:

\begin{thm}\label{FNL}
Assume that $n\geq 2$, and $Q$ and $f$ satisfy (H1)-(H5) as well as (f1)-(f4), above. Then there does not exist a positive supersolution $u>0$ of \re{FNLeq}.
\end{thm}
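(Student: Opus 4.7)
The plan is to argue by contradiction. Suppose $u>0$ satisfies \re{FNLeq} on $\rn\setminus B_{r_0}$, and set $u_r(x)=u(rx)$, $m(r)=\inf_{B_{2r}\setminus B_r}u$ for $r>2r_0$; by (H5) the function $u_r$ satisfies
\[
-Q[u_r]\ge r^p f(u_r(x),rx)\qquad\text{in }\rn\setminus B_{r_0/r}.
\]
First I apply the very weak Harnack inequality (H4) to $u_r$ on $B_3\setminus B_{1/2}$ at a point where $u_r$ is near its essential infimum on $B_2\setminus B_1$, producing a set $A_r\subseteq B_2\setminus B_1$ with $|A_r|\ge\tfrac12|B_2\setminus B_1|$ on which $m(r)\le u_r\le\bar C m(r)$, for a constant $\bar C$ depending only on $Q$ and $n$. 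Then the quantitative strong maximum principle (H3), applied to $u_r$ with source term $\chi_{A_r}$ times the constant lower bound for $r^p f$ afforded by the definition of $A_r$, produces the master estimate
\begin{equation}\label{plan-star}
m(r)\ge \bar c\, r^p \inf_{x\in A_r,\, s\in[m(r),\bar C m(r)]} f(s,rx)\qquad\text{for all }r\ge 2r_0.
\end{equation}

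From \eqref{plan-star} and (f2) I extract a dichotomy: either $\liminf_{r\to\infty}m(r)=0$ or $\limsup_{r\to\infty}m(r)=\infty$. Indeed, if $m(r)$ stayed in a compact subinterval $[a,b]\subset(0,\infty)$ for all large $r$, then (f2) would give $r^p f(s,rx)\ge 2^{-p}|rx|^p f(s,rx)\to\infty$ uniformly in $(s,x)\in[a,\bar C b]\times(B_2\setminus B_1)$, contradicting \eqref{plan-star}. \LEM{mbounds_1} tightens this: $\liminf m(r)=0$ forces $\alpha^*>0$ (else $m(r)\ge c>0$), and $\limsup m(r)=\infty$ forces $\widetilde\alpha^*\le0$ (else $m(r)\le C$). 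I therefore handle the two (possibly overlapping) alternatives separately.

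Case A ($\alpha^*>0$ with $\liminf m(r)=0$) follows the pattern of \THM{slthm}. By \LEM{mbounds}, the quantity $\rho(r):=\inf_{B_{2r}\setminus B_r}u/\Phi$ is nondecreasing and equals $\inf_{\rn\setminus B_r}u/\Phi$. For $r$ with $m(r)$ suitably small, I substitute into \eqref{plan-star} the lower bound $f(s,y)\ge\tfrac12 h(k)\,s^{p-1}/|y|^p$ provided by (f3) on the range $k\Phi(y)\le s\le\mu$; the inclusion $k\Phi(rx)\le u_r(x)\le\mu$ throughout $A_r$ is secured by choosing $k$ comparable to $\rho(r/2)$, since $u\ge\rho(r/2)\Phi$ on $\rn\setminus B_{r/2}$. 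Combined with the lower bound $m(r)\ge cr^{-\alpha^*}$ from \LEM{mbounds_1}, this yields an improved upper bound $m(r)\le Cr^{-\alpha^*}$ and hence $\rho(r)\le C$. The loop is closed by iteration: I apply (H3)(ii) to the nonnegative function $v_r(x):=u_r(x)-\rho(r/2)\Phi(rx)$, which satisfies $-Q[v_r]\ge r^p f(u_r,rx)\chi_{A_r}$, to obtain $v_r\ge a\Phi(rx)$ on $B_2\setminus B_1$ with $a>0$ independent of $r$, so that $\rho(r)\ge\rho(r/2)+a$ and hence $\rho(r)\to\infty$, contradicting $\rho(r)\le C$. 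Case B ($\widetilde\alpha^*\le 0$ with $\limsup m(r)=\infty$) is entirely dual, with $\Phi\leftrightarrow\widetilde\Phi$, $\rho\leftrightarrow\widetilde\rho:=\inf u/\widetilde\Phi$, and (f3)$\leftrightarrow$(f4), exploiting $\widetilde h(k)\to\infty$ as $k\to 0^+$.

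The principal obstacle is the calibration inside the iteration step. At each scale $r$ one must simultaneously (i) take $k$ large enough that the lower bound extracted from (f3) (or small enough in (f4)) is sizable, (ii) verify that the interval $[k\Phi(rx),\mu]$ (resp.\ $[\mu,k\widetilde\Phi(rx)]$) contains $u_r$ throughout $A_r$ at the current level of $\rho$ (resp.\ $\widetilde\rho$), and (iii) produce an increment in $\rho$ (or $\widetilde\rho$) that is not overwhelmed by the scale-dependent factor of order $r^{\alpha^*(2-p)}$ appearing in \eqref{plan-star} when $p\ne 2$. In addition, a separate bookkeeping according to the signs of $\alpha^*$ and $\widetilde\alpha^*$ is needed to combine \LEM{mbounds_1} and \LEM{mbounds} so that the upper bound on $\rho$ (or $\widetilde\rho$) driving the contradiction is in force in every regime covered by the theorem.
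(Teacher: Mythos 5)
Your outline follows the paper's own strategy (rescale, use (H4) to produce $A_r$, use (H3) for a master estimate on $m(r)$, use (f2) for the dichotomy, then drive $\rho$, resp.\ a $\widetilde\Phi$-based quantity, to a contradiction via (f3)/(f4)), but two steps are genuinely wrong as written, and your listed obstacle (iii) is a symptom of the first. The master estimate must carry the operator's homogeneity: since $Q[tu]=t^{p-1}Q[u]$ by (H5), one normalizes $u_r$ so that the source becomes $\chi_{A_r}$ and then (H3)(i) gives $m(r)^{p-1}\geq c\,r^p\inf f$, i.e.\ the paper's estimate \eqref{mrandf}, not $m(r)\geq c\,r^p\inf f$ as in your plan. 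Because (f3)--(f4) are formulated in terms of $s^{1-p}f(s,x)$, the correct estimate meshes with them exactly: in Case 1 one gets $\inf_{r\leq|x|\leq2r}\Psi_{C_1k}(x)\leq C\,m(r)^{1-p}\cdot C\,m(r)^{p-1}=C$, contradicting $h(k)\to\infty$, and no factor of order $r^{\alpha^*(2-p)}$ ever appears. Your difficulty (iii) is thus an artifact of the mis-stated estimate, and since you leave it unresolved, your argument does not close when $p\neq2$.

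The iteration step is also invalid for the operators the theorem covers: you set $v_r=u_r-\rho(r/2)\Phi(r\,\cdot)$ and assert $-Q[v_r]\geq r^pf(u_r,rx)$. For the $p$-Laplacian or an Isaacs operator the difference of a supersolution and a solution satisfies no such inequality; this subtraction is legitimate only for the Laplacian (as in Section~\ref{sectwo}). The paper avoids it by invoking (H3) in its \emph{comparison} form: with $v$ a suitable normalization of $u_r$ and $w$ the corresponding normalization of $\rho(r/2)\Phi(r\,\cdot)$ one has $-Q[v]\geq\chi_{A_r}\geq0=-Q[w]$ and $v\geq w$ on the boundary, whence $v\geq w+c_0$ by \THM{qsmp}(ii); this is what produces $\rho(r)\geq\rho(r/2)+c_1$. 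Two further points. Your dichotomy ($\liminf m=0$ or $\limsup m=\infty$) should be upgraded to the full limit: (f2) together with the master estimate rules out \emph{every} subsequential limit of $m(r)$ in $(0,\infty)$, so $m(r)\to0$ or $m(r)\to\infty$; with only the weak dichotomy, the increment $\rho(r)\geq\rho(r/2)+a$ is available only at scales where $\bar C m(r)\leq\mu$, and your conclusion ``$\rho(r)\to\infty$'' does not follow as stated (one would have to argue instead from monotonicity and boundedness of $\rho$ against infinitely many increments of fixed size). Finally, Case 2 is not ``entirely dual'': since $\widetilde\Phi\to\infty$ when $\widetilde\alpha^*\leq0$, the comparison must be carried out on the inner annuli $B_r\setminus B_{r_0}$ with the shifted function $\widetilde\Phi-\mathrm{const}$, and the paper's iteration drives $\omega(r)=\inf_{B_{2r}\setminus B_r}u/(\widetilde\Phi-1)$ \emph{downward}, $\omega(r)\geq\omega(2r)+c_2$, contradicting $\omega>0$ rather than an upper bound on $\widetilde\rho$; your sketch, which mirrors Case 1 verbatim with $\widetilde\rho=\inf u/\widetilde\Phi$, skips exactly the boundary-normalization and monotonicity issues that make this case different.
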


We prove Theorem \ref{FNL} in the following subsection, and conclude the present one by stating a consequence for nonlinearities $f$ of the simpler form
\begin{equation*}
f(s,x) = |x|^{-\gamma}  g(s).
\end{equation*}
For such $f$, we observe at once that conditions (f1) and (f2) are together equivalent to the statement
\begin{equation}\label{f12ggam}
g:(0,\infty) \to (0,\infty) \quad \mbox{is continuous, and} \quad \gamma < p.
\end{equation}
We claim that, together with \EQ{f12ggam}, a sufficient condition for (f3) is
\begin{equation} \label{f3ggam}
\mbox{if} \ \alpha^* > 0, \quad \mbox{then} \ \liminf_{s\searrow 0} s^{-\sigma^*}g(s) > 0, \quad \mbox{for} \ \sigma^*:= (p-1) + \frac{p-\gamma}{\alpha^*}.
\end{equation}
Observe first that \EQ{f3ggam} implies that
\begin{equation*}
\eta= \eta(\mu):= \inf_{0 < s < \mu} s^{-\sigma^*} g(s) > 0, \quad \mbox{for every} \ \mu > 0.
\end{equation*}
Thus with $\Psi_k(x)$ as in (f3), we have for each $\mu > 0$ and all sufficiently large $|x|$,
\begin{equation*}
\Psi_k(x) = |x|^p \inf_{k\Phi(x) \leq s \leq \mu} s^{1-p} f(s,x) \geq \eta \inf_{k\Phi(x) \leq s \leq \mu} |x|^{p-\gamma} s^{1-p+\sigma^*}.
\end{equation*}
Since $1-p+\sigma^* = (p-\gamma) / \alpha^* > 0$, the last infimum above is attained at $s = k\Phi(x)$ (recall that $\Phi(x)\to 0$ as $|x|\to \infty$, since $\alpha^*>0$).  Hence we obtain, by the approximate homogeneity of $\Phi$, that
\begin{equation*}
\Psi_k(x) \geq c \eta k^{1-p+\sigma^*} |x|^{p-\gamma-\alpha^*(1-p + \sigma^*)} = \eta k^{1-p+\sigma^*} \rightarrow \infty \quad \mbox{as} \ k \to \infty,
\end{equation*}
where we have also used $1-p+\sigma^*-(p-\gamma)/\alpha^* = 0$. This confirms that (f3) holds. A similar analysis on the validity of  (f4) when $f(s,x) = |x|^{-\gamma}g(s)$ yields the following corollary, which contains Theorems \ref{slthm} and \ref{fpowtwodeethm} as very particular cases.

\begin{cor}\label{FNLm}
Assume that $n\geq 2$ and $Q$ satisfies (H1)-(H5). Then the differential inequality
\begin{equation*}
-Q[u] \geq |x|^{-\gamma} g(u),\qquad \gamma<p,
\end{equation*}
has no positive solution in any exterior domain, provided  the function $g:(0,\infty)\to(0,\infty)$ is continuous and satisfies
\begin{eqnarray} \label{f3ggam1}
\mbox{if} \ \alpha^* > 0, &\mbox{then}& \ \liminf_{s\searrow 0} s^{-\sigma^*}g(s) > 0, \quad \mbox{for} \ \sigma^*:= (p-1) + \frac{p-\gamma}{\alpha^*},\\
\label{f4ggam0}
\mbox{if}  \ \widetilde \alpha^* = 0,  &\mbox{then}& \ \liminf_{s\to \infty} e^{as} g(s) > 0, \quad \mbox{for every} \ a>0,\\
\label{f4ggam1}
\mbox{if} \ \widetilde \alpha^* < 0, &\mbox{then}& \ \liminf_{s\to \infty} s^{-\widetilde\sigma^*}g(s) > 0, \quad \mbox{for} \ \widetilde\sigma^*:= (p-1) + \frac{p-\gamma}{\widetilde\alpha^*}.
\end{eqnarray}
\end{cor}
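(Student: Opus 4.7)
The plan is to derive Corollary~\ref{FNLm} as a direct application of Theorem~\ref{FNL}, by checking that the specific nonlinearity $f(s,x) = |x|^{-\gamma}g(s)$ satisfies (f1)--(f4). Conditions (f1) and (f2) are immediate from the continuity and positivity of $g$ together with $\gamma < p$. Condition (f3) has already been verified in the excerpt just above the statement of the corollary, using the hypothesis \eqref{f3ggam1}. So essentially all the work is in verifying (f4), and this verification mirrors the (f3) argument with $\Phi$ replaced by $\widetilde\Phi$ and the role of $s\searrow 0$ replaced by $s\to\infty$.

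For (f4) when $\widetilde\alpha^* < 0$, hypothesis \eqref{f4ggam1} yields $\eta>0$ and $\mu>0$ with $g(s) \geq \eta s^{\widetilde\sigma^*}$ for $s\geq \mu$. Since $1-p+\widetilde\sigma^* = (p-\gamma)/\widetilde\alpha^* < 0$, the function $s \mapsto s^{1-p}g(s)$ is bounded below by a strictly decreasing power, so the infimum in the definition of $\widetilde\Psi_k(x)$ is attained at the right endpoint $s = k\widetilde\Phi(x)$. Using the approximate homogeneity $\widetilde\Phi(x) \leq C|x|^{-\widetilde\alpha^*}$, the $|x|$-factors cancel exactly and one obtains $\widetilde\Psi_k(x) \geq c\eta\, k^{(p-\gamma)/\widetilde\alpha^*}$, giving $\widetilde h(k)>0$ for every $k>0$ and $\widetilde h(k)\to\infty$ as $k\to 0$ (because the exponent is negative).

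The delicate case, and the only real obstacle, is $\widetilde\alpha^* = 0$, where $\widetilde\Phi(x)$ grows only like $\log|x|$ at infinity. Fix any $\mu>0$. By \eqref{f4ggam0} together with the continuity and positivity of $g$, for every $a>0$ the quantity $\tilde\eta(a) := \inf_{s\geq \mu} e^{as}g(s)$ is positive, so $g(s) \geq \tilde\eta(a) e^{-as}$ for $s\geq \mu$. Since $p>1$, the function $s\mapsto s^{1-p}e^{-as}$ is strictly decreasing on $(0,\infty)$, and the infimum in $\widetilde\Psi_k$ is again attained at $s=k\widetilde\Phi(x)$, producing the bound
\begin{equation*}
\widetilde\Psi_k(x) \geq \tilde\eta(a)\,|x|^{p-\gamma}\,(k\widetilde\Phi(x))^{1-p}\,e^{-ak\widetilde\Phi(x)}.
\end{equation*}
From \EQ{homo} with $\widetilde\alpha^* = 0$ we have $\widetilde\Phi(x) \leq C + \log|x|$, so $e^{-ak\widetilde\Phi(x)} \geq e^{-akC}|x|^{-ak}$. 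The trick is that $a$ may be chosen after $k$: for any fixed $k>0$, pick $a < (p-\gamma)/k$, so that the power of $|x|$ in the resulting bound is strictly positive and dominates the factor $(\log|x|)^{p-1}$. Hence $\widetilde\Psi_k(x) \to \infty$ as $|x| \to \infty$ for every fixed $k>0$, giving $\widetilde h(k)=\infty$, which trivially implies $\widetilde h(k)\to\infty$ as $k\to 0$.

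The reason this last case is the delicate one is precisely that the logarithmic growth of $\widetilde\Phi$ would be beaten by any fixed exponential decay of $g$, which is why hypothesis \eqref{f4ggam0} demands the $\liminf$ condition for \emph{every} $a>0$; this allows the free parameter $a$ to be tuned to the given $k$, and makes the argument work uniformly. Once (f1)--(f4) are in hand, Theorem~\ref{FNL} delivers the nonexistence conclusion.
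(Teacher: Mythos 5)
Your proof is correct and takes the same approach as the paper: the paper explicitly verifies (f1)--(f3) for $f(s,x)=|x|^{-\gamma}g(s)$ and then states that ``a similar analysis on the validity of (f4)'' yields the corollary, and you have carried out that similar analysis in detail. The only minor imprecision is the phrasing that ``the infimum in the definition of $\widetilde\Psi_k(x)$ is attained at the right endpoint''---strictly speaking it is the infimum of the power-function \emph{lower bound} that is attained there, which is what gives the needed bound on $\widetilde\Psi_k$---but the argument, including the key observation that in the case $\widetilde\alpha^*=0$ one chooses $a<(p-\gamma)/k$ after $k$ is fixed, is sound.
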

Observe that
\begin{equation*}
-\infty < \widetilde\sigma^* < p-1 < \sigma^* < \infty.
\end{equation*}
Applied to the model nonlinearity $f(s,x) = |x|^{-\gamma}s^\sigma$, the conditions \EQ{f3ggam1}, \EQ{f4ggam0} and \EQ{f4ggam1} are sharp. Indeed, it was shown in \cite{AS} that the inequality
\begin{equation}\label{eqpow}
-Q[u] \ge |x|^{-\gamma} u^\sigma
\end{equation}
has a positive solution in $\R^n \setminus \{ 0 \}$ if $\alpha^*>0$ and $\sigma > \sigma^*$, and even in whole space $\R^n$ in the case $\gamma \leq 0$. The argument in \cite{AS} can be easily modified to show that \EQ{f4ggam0} and \EQ{f4ggam1} are similarly sharp. For example, in the case $\widetilde \alpha^* = 0$, then the function $u(x) = \widetilde\Phi(x) + \log \widetilde\Phi(x)$ is a supersolution of the inequality
\begin{equation*}
-Q[u] \geq ce^{-a u}
\end{equation*}
in some exterior domain, for some $a,c> 0$. We multiplying $u$ by a positive constant, we can have any $a > 0$ we wish. Similarly, if $\widetilde\alpha^*<0$ and $\sigma>\widetilde\sigma^*$ then multiplying some power of $\widetilde \Phi$ by a suitably chosen constant gives a solution of  \re{eqpow} in $\R^n \setminus \{ 0 \}$.

Notice also that we have $\widetilde \sigma^*>0$ when $p + \widetilde\alpha^*(p-1) < \gamma < p$. For such values of $\gamma$ and $\widetilde \alpha^*<0$, we see that there exist uniformly elliptic operators such that sublinear inequalities with nonlinearities that behave at infinity like $u^\sigma$,  $\sigma\in (0,\widetilde\sigma^*)$, may have positive solutions.

Finally, as mentioned above, both (f3) and (f4) are void in the case $\alpha^* \leq 0$ and $\widetilde\alpha^* > 0$, and we have the nonexistence of supersolutions in exterior domains under the modest hypotheses (f1) and (f2). In fact, in this case it is an immediate consequence of Lemmas \ref{mbounds} and \ref{mbounds_new} that we do not need any hypotheses apart from the nonnegativity of $f$, provided the inequality holds in $\R^n\setminus\{0\}$.
\begin{thm} \label{twodee}
Assume $Q$ satisfies (H1), (H2), and the usual strong maximum principle. Suppose  $u\geq 0$ satisfies either
\begin{equation*}
\left\{ \begin{array}{l} -Q[u]\ge 0 \quad \mbox{in}\  \rn,\\
 \alpha^*\le 0.\end{array}\right. \qquad \mbox{or}\qquad \left\{ \begin{array}{l} -Q[u]\ge 0 \quad \mbox{in}\  \rn\setminus \{0\},\\
\alpha^*\le 0, \;\widetilde \alpha^*\ge 0.\end{array}\right.
\end{equation*}
Then $u$ is constant.
\end{thm}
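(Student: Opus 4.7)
The plan is to combine the one-sided monotonicity estimates of Lemmas \ref{mbounds} and \ref{mbounds_new} to force $r\mapsto m(r)$ to be constant on $(0,\infty)$, conclude that $u$ attains its infimum at an interior point of the underlying domain, and then invoke the hypothesized strong maximum principle. Both lemmas are stated for a fixed $r_0>0$ or a finite $R$, but since $-Q[u]\ge 0$ holds all the way up to (or across) the origin in each of our two cases, the monotonicity and identifications they furnish for $m(r)$ extend to all of $(0,\infty)$ by letting $r_0\to 0^+$ or $R\to \infty$.

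First I would treat the case $-Q[u]\ge 0$ in $\R^n$ with $\alpha^*\le 0$. The first alternative of Lemma \ref{mbounds_new} (which requires no hypothesis on $\alpha^*$) yields that $m(r)=\inf_{B_r}u$ is nonincreasing on $(0,\infty)$. The case $\alpha^*\le 0$ of Lemma \ref{mbounds} (whose proof in fact shows $m(r)=\inf_{\R^n\setminus B_r}u$) gives that $m$ is nondecreasing on $(0,\infty)$. Therefore $m\equiv M$ for some finite constant $M\ge 0$, and combining the two identities yields $M=\inf_{\R^n}u$. By lower semi-continuity of $u$, the value $M$ is attained at some interior point of $\R^n$. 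Using (H5), $-Q[u-M]=-Q[u]\ge 0$, so $u-M$ is a nonnegative supersolution vanishing at an interior point, and the strong maximum principle forces $u\equiv M$.

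The second case, $-Q[u]\ge 0$ in $\R^n\setminus\{0\}$ with $\alpha^*\le 0$ and $\widetilde\alpha^*\ge 0$, is completely parallel: the second alternative of Lemma \ref{mbounds_new} gives $m(r)=\inf_{B_r\setminus\{0\}}u$ nonincreasing on $(0,\infty)$, and Lemma \ref{mbounds} again produces $m(r)=\inf_{\R^n\setminus B_r}u$ nondecreasing on $(0,\infty)$. Hence $m\equiv M=\inf_{\R^n\setminus\{0\}}u$, and $u$ attains $M$ at an interior point of the connected (since $n\ge 2$) set $\R^n\setminus\{0\}$, so the strong maximum principle again yields $u\equiv M$. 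There is no serious obstacle in this argument; the only point requiring a little care is the mild extension of the two lemmas discussed at the outset, and the reliance on lower semi-continuity of $u$ (which is standard in the weak or viscosity frameworks under consideration) to ensure that the infimum $M$ is genuinely attained on a compact subset of the domain.
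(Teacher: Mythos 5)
Your argument is correct and is essentially the paper's own proof: both combine Lemma \ref{mbounds} (case $\alpha^*\le 0$) with Lemma \ref{mbounds_new} to conclude that $r\mapsto m(r)$ is constant, and then invoke the strong maximum principle. Your additional remarks (extending the lemmas to all of $(0,\infty)$, and using lower semicontinuity on a compact annulus to see that the infimum is attained at an interior point before applying the strong maximum principle) merely make explicit details the paper leaves implicit.
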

\begin{proof}
By adding a constant to $u$, we may suppose that $\inf u = 0$. According to Lemmas \ref{mbounds} and \ref{mbounds_new}, the map $r \mapsto m(r)$ is constant on $(0,\infty)$, and hence $m(r) \equiv 0$. The strong maximum principle then implies that $u\equiv 0$.
\end{proof}

The sharpness of Theorem \ref{twodee} illustrates the difference between nonexistence results in the whole space and in more general unbounded domains. For instance, the inequality $-\Delta_p u\ge 0$  has no positive solutions in $\rn$ for every $p\ge n$, while the same inequality has no positive solutions in the punctured space $\R^n \setminus \!\{ 0 \}$ only in the case $p=n$.

\subsection{Proof of Theorem \ref{FNL}}
To obtain a contradiction, let us suppose that $u > 0$ is a solution of the differential inequality
\begin{equation*}
-Q[u]\geq f(u,x) \quad \mbox{in} \ \R^n \setminus B_{r_0},
\end{equation*}
for some $r_0> 1$. For each $r> 2r_0$, denote $u_r(x) : = u(rx)$, and observe that (H5) says $u_r$ is a supersolution of
\begin{equation*}
-Q[u_r] \geq r^p f(u_r, rx) \quad \mbox{in} \ \R^n \setminus B_{1/2}.
\end{equation*}
As before, set $m(r) := \inf_{B_{2r}\setminus B_r} u = \inf_{B_2\setminus B_1} u_r$ for $r>2r_0$. Let $\bar C$ be as in (H4) with $\tau =1/2$, $K=\bar B_2\setminus B_1$, and $\Omega = B_3\setminus B_{1/2}$. According to (H4), for each $r>2r_0$ the set $A_r:=(B_{2} \setminus B_1 ) \cap \{ m(r) \leq u_r \leq \bar C m(r) \}$ has measure at least $\frac{1}{2} |B_2\setminus B_1|$. Then (H3)  and (H5) imply that for some $c> 0$,
\begin{equation}\label{mrandf}
{m(r)}^{p-1} \geq cr^p \inf\left\{ f(s,x) : r \leq |x| \leq 2r, \ m(r) \leq s \leq \bar C m(r) \right\}
\end{equation}
for every $r>2r_0$. Owing to hypothesis (f2), we immediately deduce that
\begin{equation} \label{noloitering}
\mbox{either} \quad m(r) \rightarrow 0 \quad \mbox{or} \quad m(r) \rightarrow + \infty \quad \mbox{as} \ r \to \infty.
\end{equation}
Indeed, if we had a subsequence $r_j \to \infty$ such that $m(r_j) \rightarrow a \in (0,\infty)$, then by sending $r=r_j \to \infty$ in \EQ{mrandf} we obtain a contradiction to (f2). We will complete the proof of Theorem \ref{FNL} by showing that the alternatives in \EQ{noloitering} are contradicted by (f3) and (f4), respectively.

\medskip

\noindent\emph{Case 1: $m(r) \to 0$ as $r \to \infty$.} If $\alpha^* \leq 0$, then we may immediately appeal to Lemma~\ref{mbounds_1} to obtain a contradiction. So we need only consider the case that $\alpha^* > 0$, for which Lemma \ref{mbounds_1} provides the lower bound $
m(r) \geq c r^{-\alpha^*}$ for all $r > 2r_0$.
We next establish the upper bound
\begin{equation} \label{mub}
m(r) \leq C r^{-\alpha^*} \quad \mbox{for all sufficiently large} \ r > 2r_0.
\end{equation}
Let $k> 0$ and $r> 2r_0$ be very large, and suppose that $m(r) \geq kr^{-\alpha^*}$. Then assuming that $r> 0$ is large enough that $\bar C m(r) \leq \mu$, and using \EQ{mrandf}, we obtain for some $C_1$ such that $C_1\Phi(x)\ge |x|^{-\alpha^*}$,
\begin{align*}
\inf_{r\leq|x|\leq 2r} \Psi_{C_1k}(x) & \leq \inf\left\{ s^{1-p}|x|^p f(s,x) : r\leq |x|\leq 2r, \ k |x|^{-\alpha^*} \leq s \leq \mu \right\} \\
& \leq \inf\left\{ s^{1-p} |x|^p f(s,x) : r \leq |x| \leq 2r, \ m(r) \leq s \leq \bar Cm(r) \right\} \\
& \leq C m(r)^{1-p}\inf \left\{ r^p  f(s,x) : r \leq |x| \leq 2r, \ m(r) \leq s \leq \bar Cm(r)  \right\} \\
& \leq C.
\end{align*}
Owing to (f3), this is clearly impossible if $k > 0$ and $r> 2r_0$ are large enough. Thus we obtain the upper bound \EQ{mub}, and we have the two-sided estimate
\begin{equation} \label{mrtrap}
c r^{-\alpha^*} \leq m(r) \leq C r^{-\alpha^*}
\end{equation}
for large $r>2r_0$.

According to Lemma \ref{mbounds}, the map $r \mapsto \rho(r)$ is nondecreasing. Thus for every $r>2r_0$, the functions
\begin{equation*}
v_r(x): = r^{\alpha^*}u(rx),\qquad w_r(x):=  \rho(r) r^{\alpha^*}   \Phi(rx)
\end{equation*}
satisfy $v_r \geq w_r$ in $\R^n \setminus B_{1}$, and we have
\begin{equation*}
-Q[v_r] \geq r^{p+\alpha^*(p-1)} f( u_r,rx)\ge 0 = -Q[w_r] \quad \mbox{in} \ \R^n \setminus B_{1/2}.
\end{equation*}
Note that $c\le v_r\le C$ on $A_r$, by \re{mrtrap}.
Using \re{homo}, \EQ{mrandf}, \EQ{mrtrap}, and (f3), for large enough $r>2r_0$ we have
\begin{align*}
\inf_{A_r} \left( r^{p+\alpha^*(p-1)} f(u_r,rx)\right)&\geq c r^p \inf_{A_r}  \left( \frac{f(r^{-\alpha^*}v_r, rx)}{(r^{-\alpha^*}v_r)^{p-1}} \right) \\
& \geq c \inf\left\{ s^{1-p}|y|^p f(s,y) : r \leq |y|, \ m(r) \leq s \leq  C m(r) \right\}  \\
& \geq c\inf_{x\ge r} \Psi_c(x)\geq c.
\end{align*} Hence for such $r$,
\begin{equation*}
-Q[c^{1/(1-p)}v_r] \geq  \chi_{A_r} \ge 0 = -Q[c^{1/(1-p)}w_r]\quad \mbox{in} \ B_{5} \setminus B_{1}.
\end{equation*}
According to (H3) and \re{homo}, this implies
\begin{equation*}
v_r \geq w_r + c_0 \geq (\rho(r) + c_1)\,  r^{\alpha^*}   \Phi(rx) \quad \mbox{in} \ B_{4}\setminus B_{2},
\end{equation*}
for some $c_1> 0$ which does not depend on $r$. Unwinding the definitions, we discover that $u \geq (\rho(r) + c_1) \Phi$ in $B_{4r} \setminus B_{2 r}$. In particular, $\rho(2 r) \geq \rho(r) + c_1$, and we deduce that $\rho(r) \rightarrow \infty$ as $r \to \infty$. This contradicts the second inequality in \EQ{mrtrap}, since obviously $\rho(r)\le Cm(r)\max_{r\le |x|\le 2r}\Phi\le Cm(r)r^{\alpha^*}$. The proof in the case $\lim_{r\to 0} m(r) = 0$ is complete.

\medskip

\noindent\emph{Case 2: $m(r) \to \infty$ as $r\to \infty$.} If $\widetilde \alpha^* > 0$, we obtain an immediate contradiction by applying Lemma \ref{mbounds_1}, so we may suppose that $\widetilde \alpha^*\leq 0$ and the second alternative in (f4) is in force. We may assume that $\widetilde\Phi$ is normalized so that $\max_{\partial B_{r_0}} \widetilde \Phi = 1$, as well as $\widetilde\Phi > 0$ on $\R^n \setminus B_{r_0}$.

Lemma \ref{mbounds_1}  gives the upper bound
\begin{equation} \label{mrupna}
m(r) \leq C \max_{r\le |x|\le 2r}\widetilde \Phi(x) \leq C \min_{r\le |x|\le 2r}\widetilde \Phi(x), \quad r> 2r_0,
\end{equation}
using the approximate homogeneity of $\widetilde\Phi$.

We next establish a lower bound for $m(r)$.
Let $k>0$ and assume  $r>2r_0$ is large enough that $m(r) \geq \mu$. Suppose for contradiction  that $m(r) \leq k \min_{r\le |x|\le 2r}\widetilde \Phi(x)$ for all large $r$. Then this, \EQ{mrandf}, and our assumption that $\lim_{r\to \infty} m(r)~=~\infty$ imply that for sufficiently large $r> 2r_0$
\begin{align*}
\inf_{r\le |x|\le 2r}\widetilde\Psi_{\bar C k}(x)
&\leq
\inf\left\{ s^{1-p} |x|^p f(s,x) : r\le |x| \leq 2 r , \ \mu \leq s \leq \bar C k \widetilde \Phi(x) \right\} \\
& \leq \inf \left\{ s^{1-p} r^p f(s,x) : r \leq |x| \leq 2r, \ m(r) \leq s \leq \bar C m(r) \right\} \\
 &\leq Cm(r)^{1-p} \inf \left\{  r^p f(s,x) : r \leq |x| \leq 2r, \ m(r) \leq s \leq \bar C m(r) \right\} \\
 &\leq C.
\end{align*}
This contradicts (f4) if $k> 0$ is  sufficiently small, and we have the lower bound
\begin{equation*}
c\max_{r\le |x|\le 2r} \widetilde\Phi(x) \leq c \min_{r\le |x|\le 2r} \widetilde\Phi(x) \leq m(r).
\end{equation*}
Recalling \EQ{mrupna}, we have the two-sided estimate
\begin{equation}\label{mrtrap2}
c\max_{r\le |x|\le 2r}\widetilde \Phi(x) \leq m(r) \leq C \min_{r\le |x|\le 2r}\widetilde \Phi(x)  \quad \mbox{for sufficiently large}  \ r>r_0.
\end{equation}
Define the quantity
\begin{equation*}
\omega(r) : = \inf_{B_{2r}\setminus B_r} \frac{u}{\widetilde\Phi -1}, \quad r > r_0.
\end{equation*}
By the weak comparison principle, we have that
\begin{equation*}
u \geq \omega(r) \left( \widetilde \Phi - 1 \right)\quad \mbox{in} \ B_{r} \setminus B_{r_0}.
\end{equation*}
Thus for $r> 2r_0$, the positive functions
\begin{equation*}
v_r(x): = u(rx) + \omega(2r), \qquad w_r:= \omega(2r)  \widetilde \Phi(rx)
\end{equation*}
are such that $v_r\ge w_r$ and satisfy the differential inequalities
\begin{equation*}
-Q[v_r] \geq r^{p} f(u_r,rx)\ge 0 = -Q[w_r] \quad \mbox{in} \ B_4 \setminus B_{1/2}.
\end{equation*}
Using \EQ{mrandf}, \EQ{mrtrap2}, and (f4), we see that for sufficiently large $r> 2r_0$,
\begin{align*}
\inf_{A_r}\left( r^{p} f(u_r,rx) \right) & \geq \frac{1}{2^p} \inf \left\{ |y|^p f(s,y) : r \leq |y| \leq 2r, \ m(r) \leq s \leq \bar C m(r) \right\} \\
& \geq c {m(r)}^{p-1} \inf\left\{ s^{1-p} |y|^p f(s,y) : \mu \leq s \leq C\widetilde\Phi(y), \ r \leq |y| \right\} \\
&  \geq c {m(r)}^{p-1}.
\end{align*}
In particular, for such $r>2r_0$,
\begin{equation*}
-Q[v_r] \geq c{m(r)}^{p-1} \chi_{A_r}\geq 0= -Q[w_r] \quad \mbox{in} \ B_{4} \setminus B_{1/2}.
\end{equation*}
Applying (H3) and (H5), we find that
\begin{equation*}
v_r \geq c_1 m(r) +w_r \quad \mbox{on} \ B_2\setminus B_{1},
\end{equation*}
for some $c_1> 0$ which does not depend on $r$. Using the definition of $v_r$ and $w_r$, together with \re{mrtrap2}, we discover that for sufficiently large $r> 2r_0$,
\begin{equation*}
u(x) - \omega(2r) \left( \widetilde \Phi - 1 \right) \geq c_1 m(r) \geq c_2 \widetilde\Phi \quad \mbox{on} \ B_{2r}\setminus B_r,
\end{equation*}
and therefore
\begin{equation*}
u(x) - \left( \omega(2r) + c_2 \right) \left( \widetilde\Phi - 1 \right) > 0 \quad \mbox{on} \ B_{2r}\setminus B_r,
\end{equation*}
for some $c_2> 0$ which does not depend on $r$. It follows that $\omega(r) \geq \omega(2r) + c_2$ for all sufficiently large $r> 2r_0$, and hence $\omega(r) \rightarrow -\infty$ as $r \to \infty$. This is an obvious contradiction, since $\omega> 0$. Our proof is complete.

\section{Cone-like domains} \label{cones}

In this section we adapt and apply our method to obtain nonexistence results for the semilinear equation
\begin{equation} \label{modelgcone}
-\Delta u = |x|^{-\gamma} g(u)\qquad\mbox{in }\; \mathcal{C}\setminus B_{r_0},
\end{equation}
 where $\mathcal{C}$ is a cone-like domain. Our Theorem \ref{slthm-cones} (see also  Remark \ref{remcon} below) generalizes the previous results on this problem, in which only the case $g(u)=u^\sigma$ was studied.

Our technique for proving the nonexistence of supersolutions of 
$-Q[u] = f(u,x)$
in an unbounded domain $\Omega$ requires the availability of a positive subsolution $\Psi$ of $-Q[\Psi] = 0$ in $\Omega$, which we ``slide underneath" $u$. 
If $\Omega$ is an exterior domain, then $\partial \Omega$ is compact, and so we typically take $\Psi=\Phi$ where $\Phi$ is a fundamental solution of $-Q[\Phi] =0$. We need not worry about the boundary of $\Omega$ when using the comparison principle, since by considering a slightly smaller subdomain, we may assume that $\inf u > 0$ on $\partial \Omega$. In the case that $\Omega$ is a more general cone-like domain, the situation is different. Finding a suitable subsolution $\Psi$ is more involved in this case because our argument requires $\Psi$ to vanish on $\partial \Omega$, which is unbounded.

It is well-known how to construct  $\Psi$ for the Laplacian on a cone, as we recall below. Building such functions $\Psi$ for more general nonlinear operators is an open problem, which we intend to study in the future.

\medskip

Throughout this section we assume that $n\geq 2$ and denote by $\sph$ the unit sphere in $\R^n$, and take $\omega \subseteq \sph$ to be a nonempty, proper, connected, smooth, and relatively open subset of $\sph$. Our cone-like domain is $\mathcal C_\omega$, given by
\begin{equation*}
\mathcal C_\omega : = \left\{ x \in \R^n \setminus \{ 0 \} : \frac{x}{|x|} \in \omega \right\}.
\end{equation*}
Let $\lambda_{1,\omega} $ and $\varphi_{1,\omega}$ denote, respectively, the principal eigenvalue and eigenfunction of the Dirichlet Laplace-Beltrami operator $-\Delta_\theta$ on $\omega$, where $\theta = x/|x|$. Then for $\beta \in \R$, the function
\begin{equation*}
\Psi(x) : = |x|^{-\beta} \varphi_{1,\omega} (\theta)
\end{equation*}
is positive in $\mathcal C_{\omega}$, continuous on $\overline C_\omega \!\setminus \!\{ 0 \}$, smooth in $C_\omega$, and vanishes on $\partial C_\omega \!\setminus \{ 0 \}$. Using the formula
$
\Delta u = \frac{1}{r^2} \Delta_{\theta} u + \frac{n-1}{r} \frac{\partial u}{\partial r} + \frac{\partial^2 u}{\partial r^2},
$
an easy calculation verifies that
\begin{equation*}
-\Delta \Psi(x) = \left( \lambda_{1,\omega} - \beta(\beta+2-n) \right) |x|^{-\beta-2} \varphi_{1,\omega}(\theta).
\end{equation*}
The solutions of the quadratic equation $\beta(\beta+2-n) = \lambda_{1,\omega}$ are\begin{equation*}
\beta^\pm := \frac{1}{2}(n-2) \pm \frac{1}{2} \sqrt{ (n-2)^2 + 4 \lambda_{1,\omega}}.
\end{equation*}
Therefore the functions defined by
\begin{equation*}
\Psi^\pm(x): = |x|^{-\beta^\pm} \varphi_{1,\omega}(\theta)
\end{equation*}
are positive and harmonic in $\mathcal{C}_\omega$, 
and $\Psi^\pm = 0$ on $\partial C_\omega \setminus \{ 0 \}$.
Notice that since $\omega$ is a proper subset of $\sph$, the eigenvalue $\lambda_{1,\omega} $ is strictly positive and thus we have
\begin{equation}\label{betabnd}
\beta^- < 0 \leq n-2 < \beta^+.
\end{equation}
Hence  $\Psi^+(x) \to 0$ as $|x|\to \infty$, while $\Psi^-(x)$ is unbounded for large $|x|$. 
\medskip

Let us now state our nonexistence results for supersolutions of the semilinear equation \eqref{modelgcone} in cone-like domains. We define the constants
\begin{equation}
\sigma^\pm : = 1 + \frac{2-\gamma}{\beta^\pm}.
\end{equation}
Note that $\beta^\pm\neq 0$ due to \eqref{betabnd},  so $\sigma^\pm$ are well-defined and $\sigma^- < 1 < \sigma^+$. In addition $\sigma^+ < 1 + \frac{2-\gamma}{n-2}$ if $n\geq 3$.

\begin{thm} \label{slthm-cones}
Suppose that $\gamma < 2$, 
$g:(0,\infty) \to (0,\infty)$ is continuous and \begin{equation} \label{local-cones}
\liminf_{s\searrow 0} s^{-\sigma^+} g(s) > 0, \quad \mbox{and} \quad \liminf_{s\to \infty} s^{-\sigma^-} g(s) > 0.
\end{equation} Then the differential inequality
\begin{equation}\label{cone-eq}
-\Delta u \geq |x|^{-\gamma} g(u) \quad \mbox{in} \ \mathcal C_\omega \setminus B_{r_0}
\end{equation}
does not possess a positive solution $u> 0$, for any $r_0> 0$.
\end{thm}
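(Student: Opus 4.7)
My plan is to adapt the proof of Theorem~\ref{FNL} to the cone-like domain $\mathcal{C}_\omega\setminus B_{r_0}$, taking $\Phi:=\Psi^+$ (so $\alpha^*=\beta^+>0$) and $\widetilde\Phi:=\Psi^-$ (so $\widetilde\alpha^*=\beta^-<0$). Because both $\alpha^*>0$ and $\widetilde\alpha^*\leq 0$ hold simultaneously, the two parts (f3) and (f4) are both active, and they correspond exactly to the two conditions in \EQ{local-cones}: the bound $\liminf_{s\to 0}s^{-\sigma^+}g(s)>0$ is the (f3) condition via the identity $\beta^+(\sigma^+-1)=2-\gamma$, while $\liminf_{s\to\infty}s^{-\sigma^-}g(s)>0$ is (f4) via $\beta^-(\sigma^--1)=2-\gamma$. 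Since $f(u,x)=|x|^{-\gamma}g(u)$ with $\gamma<2=p$, conditions (f1)--(f2) hold automatically.

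Fix nested smooth subdomains $\omega''\Subset\omega'\Subset\omega$ of $\sph$, and for $r>2r_0$ set
\begin{equation*}
\mathcal{A}''_r:=\{r\leq|x|\leq 2r\}\cap\mathcal{C}_{\omega''},\qquad M(r):=\inf_{\mathcal{A}''_r}u.
\end{equation*}
After rescaling $u_r(x):=u(rx)$, the inequality becomes $-\Delta u_r\geq r^{2-\gamma}|x|^{-\gamma}g(u_r)$ on $\mathcal{C}_\omega$, and the annular piece $\{\tfrac12<|x|<3\}\cap\mathcal{C}_{\omega'}$ compactly contains $\overline{\mathcal{A}''_1}$. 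The weak Harnack inequality (H4), applied at a near-minimum point of $u_r$ in $\mathcal{A}''_1$, produces a set $A_r\subset\mathcal{A}''_1$ of measure at least $\tfrac12|\mathcal{A}''_1|$ on which $M(r)\leq u_r\leq\bar C M(r)$. Applying the quantitative strong maximum principle (H3) with $h:=r^{2-\gamma}|x|^{-\gamma}g(u_r)\chi_{A_r}$ then yields, exactly as in the proof of Theorem~\ref{slthm}, the key inequality
\begin{equation*}
M(r)\geq c\,r^{2-\gamma}\min_{s\in[M(r),\,\bar CM(r)]}g(s)\qquad\text{for all large }r.
\end{equation*}
Since $\gamma<2$ and $g$ is continuous and strictly positive, this forces the dichotomy $M(r)\to 0$ or $M(r)\to\infty$ as $r\to\infty$.

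In the case $M(r)\to 0$, the bound $g(s)\geq cs^{\sigma^+}$ near zero together with the key inequality and the identity $\beta^+(\sigma^+-1)=2-\gamma$ give the upper estimate $M(r)\leq Cr^{-\beta^+}$. A matching lower estimate $M(r)\geq cr^{-\beta^+}$ comes from sliding $\Psi^+$ beneath $u$: since $u$ is positive with a positive lower bound on $\partial B_{r_0}\cap\overline{\mathcal{C}_\omega}$ while $\Psi^+=0$ on $\partial\mathcal{C}_\omega$ and $\Psi^+\to 0$ at infinity, the weak comparison principle in $(B_R\setminus B_{r_0})\cap\mathcal{C}_\omega$ (letting $R\to\infty$) yields $u\geq c_0\Psi^+$ on $\mathcal{C}_\omega\setminus B_{r_0}$. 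Setting $\rho^+(r):=\inf_{\mathcal{A}''_r}u/\Psi^+$, $v_r(x):=r^{\beta^+}u(rx)$, and $w_r(x):=\rho^+(r)r^{\beta^+}\Psi^+(rx)$ and copying Case~1 of the proof of Theorem~\ref{FNL} verbatim gives $\rho^+(2r)\geq\rho^+(r)+c_1$ and hence $\rho^+(r)\to\infty$, contradicting $\rho^+(r)\leq CM(r)r^{\beta^+}\leq C'$.

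In the dual case $M(r)\to\infty$, I would carry out the analogue of Case~2 of Theorem~\ref{FNL} with $\widetilde\Phi$ replaced by the harmonic function
\begin{equation*}
\widehat\Psi(x):=\bigl(|x|^{-\beta^-}-r_0^{\beta^+-\beta^-}|x|^{-\beta^+}\bigr)\varphi_{1,\omega}(\theta),
\end{equation*}
which is nonnegative on $\mathcal{C}_\omega\setminus B_{r_0}$, vanishes on its entire boundary, and still grows like $|x|^{-\beta^-}$ at infinity. The growth bound $g(s)\geq cs^{\sigma^-}$ for large $s$, combined with the key inequality and $\beta^-(\sigma^--1)=2-\gamma$, produces two-sided estimates $M(r)\asymp r^{-\beta^-}$, and then the boost step from Case~2 of Theorem~\ref{FNL} gives $\widetilde\rho(2r)\leq\widetilde\rho(r)-c$ for $\widetilde\rho(r):=\inf_{\mathcal{A}''_r}u/\widehat\Psi$, driving $\widetilde\rho\to-\infty$ and contradicting its positivity. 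The main technical obstacle is coping with the unbounded lateral boundary $\partial\mathcal{C}_\omega$: this is handled by working throughout on interior subcones $\omega''\Subset\omega'\Subset\omega$ so that (H3)--(H4) apply on compactly contained domains, and by engineering $\widehat\Psi$ via a linear combination of $\Psi^-$ and $\Psi^+$ so that it vanishes on the inner boundary $\partial B_{r_0}$ as well, which is essential for the comparison step in the second case to be valid.
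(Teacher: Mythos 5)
Your reduction of \eqref{local-cones} to the (f3)--(f4) template, the choice $\Phi:=\Psi^+$, $\widetilde\Phi:=\Psi^-$ with exponents $\beta^\pm$, and the dichotomy $M(r)\to 0$ or $M(r)\to\infty$ from the key inequality are all correct and match the paper's strategy. But there is a genuine gap in the iteration step of both cases, and it is precisely the difficulty you flag at the end as the ``main technical obstacle.''

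The problem is that the boost from the quantitative strong maximum principle (H3) is obtained \emph{only} on a compactly contained subcone $E(\omega',1)$, while the quantity you need to improve is an infimum over the \emph{full} cone annulus $E(\omega,1)$. First, your $\rho^+(r):=\inf_{\mathcal{A}''_r}u/\Psi^+$ is defined over the inner cone $\omega''$, and this already breaks the monotonicity/comparison step: to slide $\Psi^+$ beneath $u$ you must compare on a cone whose lateral boundary is $\partial\mathcal C_{\omega}$ (where $\Psi^+$ vanishes and $u\geq 0$); on $\partial\mathcal C_{\omega''}$ the function $\Psi^+$ does \emph{not} vanish and you have no boundary control, so the comparison argument does not close. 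The paper takes $\rho^+(\omega,r):=\inf_{E(\omega,r)}u/\Psi^+$ over the full cone for exactly this reason (Lemma~\ref{fundycmpl-cone}). Second, even with the full-cone definition, after the SMP step you only know $v_r\geq w_r+a\Psi^+(rx)$ on $E(\omega',1)$ and $v_r\geq w_r$ on the rest of $E(\omega,1)$. Near the lateral boundary $\partial\mathcal C_\omega$, both $u_r$ and $(\rho^+(\omega,r/2)+\delta a)\Psi^+$ are tending to zero, so you cannot conclude $u_r\geq(\rho^+(\omega,r/2)+\delta a)\Psi^+$ there just from $v_r\geq 0$. Thus ``copying Case~1 of Theorem~\ref{FNL} verbatim'' does not give $\rho^+(\omega,r)\geq\rho^+(\omega,r/2)+c_1$. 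The paper resolves this with a dedicated barrier lemma (Lemma~\ref{nastybd-cones}), which uses Hopf's lemma and elliptic estimates to show that a superharmonic function which is not too negative in the wider cone, nonnegative on $\partial\mathcal C_\omega$, and at least $b$ on the inner cone must be nonnegative throughout the intermediate cone annulus --- this is what lets the boost propagate (at the cost of a fixed fraction $\delta$) to the whole of $E(\omega,1)$. The same lemma is also used in the paper's $m(r)\to\infty$ case, both to prove the matching upper bound $m(r)\leq Cr^{-\beta^-}$ and to propagate the boost; your nice function $\widehat\Psi$ (which vanishes on $\partial B_{r_0}$ as well as on $\partial\mathcal C_\omega$) is a variant of the paper's $\hat\Psi^-=\Psi^--\sup_{E(\omega,r_0)}\Psi^-$ and addresses only the inner boundary, not the lateral one. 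Without an analogue of Lemma~\ref{nastybd-cones} your proposal does not close.
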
 
 \begin{remark}\label{remcon}
Prior to this work the inequality 
 \begin{equation}\label{cone-power-eq}
-\Delta u \geq |x|^{-\gamma} u^\sigma
\end{equation}
has been extensively studied in cone-like domains.  Bandle and Levine \cite{BaL}, Berestycki, Capuzzo-Dolcetta, and Nirenberg \cite{BCN} showed that \re{cone-power-eq} does not have positive solutions in $C_\omega$ if $\sigma\in (1,\sigma^+)$. More recently Kondratiev, Liskevich, Moroz, and Sobol \cite{KLM,KLMS}, discovered the critical exponent $\sigma^-$ in the sublinear case $\sigma<1$, and proved that \re{cone-power-eq} does not have positive solutions in $\mathcal C_\omega \setminus B_{r_0}$ if and only if $\sigma\in [\sigma^-,\sigma^+]$.
Theorem \ref{slthm-cones} reveals the role of these two exponents with respect to a general nonlinearity $g$: the exponent $\sigma^-$ concerns only its behavior at infinity, and $\sigma^+$ the behavior near zero. 
\end{remark}

It what follows, it will be useful to denote the sets
\begin{equation} \label{setE}
E(\omega,r,R ): = \mathcal{C}_\omega \cap \left(B_{R} \setminus B_r\right), \qquad E(\omega,r): = E(\omega,r,2r),
\end{equation}
and, given a function $u$ on $\mathcal C_\omega$, the quantities
\begin{equation} \label{rho-cones}
\rho^\pm(\omega,r,R) : = \inf_{E(\omega,r,R)} \frac{u}{\Psi^\pm}\,, \qquad \rho^\pm(\omega, r) : = \rho^\pm(\omega, r,2r).
\end{equation}

Throughout this section, it will be convenient for us to interpret the statement $
u \geq v $ on $\partial \Omega$,
for functions $u$ and $v$ possibly defined only on the domain $\Omega$, to mean
\begin{equation*}
\liminf_{x\to \partial \Omega} \, (u-v)(x) \geq 0.
\end{equation*}

\begin{remark} \label{hopfrem}
We recall that by Hopf's lemma, if $w$ is positive and superharmonic in a smooth bounded domain $\Omega$, then $w \geq c \dist(x,\partial \Omega)$ for some small $c>0$. Hence any function $u$ which is positive and superharmonic  in $C_\omega\setminus B_{r_0}$ satisfies $u \geq c \dist(x,\partial C_\omega)\ge c\Psi^\pm$ in $E(\omega,r,R)$, for any $0 < r_0<r <R$, where the constant $c> 0$ depends only on $u$, $r_0$, $r$, and $R$.
\end{remark}

With our functions $\Psi^\pm$ in hand, the generalization of Lemma \ref{fundycmpl} to cone-like domains is relatively straightforward.

\begin{lem} \label{fundycmpl-cone}
Suppose that $u> 0$ is superharmonic in $\mathcal C_\omega \setminus B_{r_0}$. Then there exist constants $c$ and $C$ which depend on $u$ and $r_0$, but not on $r$, such that
\begin{equation} \label{fundycmp-cones}
0< c \leq \rho^+(\omega,r) \quad \mbox{and} \quad 0 < \rho^-(\omega,r) \leq C \quad \mbox{for every} \ r \geq 2r_0,
\end{equation}
where $\rho^\pm(\omega,r)$ are given by \eqref{rho-cones}. Moreover, $r\mapsto \rho^+(\omega,r)$ is nondecreasing.
\end{lem}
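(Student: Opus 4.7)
The plan is to adapt the argument of Lemma~\ref{fundycmpl} to the cone setting, replacing the radial fundamental solutions of the exterior-domain case by the cone barriers $\Psi^\pm$. The new feature here is the lateral boundary $\partial \mathcal{C}_\omega$, which causes no difficulty: both $\Psi^+$ and $\Psi^-$ vanish there while $u \geq 0$, so the boundary inequality $u \geq c \Psi^\pm$ is automatic along that portion of the boundary of any subdomain of $\mathcal{C}_\omega$ that we work in.

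For the lower bound on $\rho^+$, I would first invoke Remark~\ref{hopfrem} to obtain a constant $c_1 > 0$ with $u \geq c_1 \Psi^+$ on $\mathcal{C}_\omega \cap \partial B_{2r_0}$. Fixing $R > 2r_0$ and $\varepsilon > 0$, the three pieces of the boundary of $\mathcal{C}_\omega \cap (B_R \setminus \overline{B_{2r_0}})$ are controlled as follows: the inner sphere is handled by the Hopf seed inequality, the lateral piece is free because $\Psi^+ = 0$ there, and the outer sphere works for $R$ large since $\Psi^+(x) \to 0$ as $|x|\to\infty$ (recall $\beta^+ > 0$), so $c_1 \Psi^+ \leq \varepsilon$ there. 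The weak comparison principle applied to $u + \varepsilon$ and $c_1 \Psi^+$ then yields $u + \varepsilon \geq c_1 \Psi^+$ in the region; sending $R\to\infty$ and $\varepsilon \to 0$ gives $u \geq c_1 \Psi^+$ on $\mathcal{C}_\omega \setminus \overline{B_{2r_0}}$, hence $\rho^+(\omega, r) \geq c_1$ for all $r \geq 2r_0$. The very same argument with $c_1$ replaced by $\rho^+(\omega, r)$ in fact shows that $\rho^+(\omega, r) = \inf_{\mathcal{C}_\omega \setminus B_r} u/\Psi^+$, from which the monotonicity of $r \mapsto \rho^+(\omega, r)$ is immediate.

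For the upper bound on $\rho^-$ the roles reverse, because $\Psi^-$ is unbounded at infinity ($\beta^- < 0$). Set $c_0 := \max_{\overline{\mathcal{C}_\omega} \cap \partial B_{r_0}} \Psi^- < \infty$ and fix once and for all an interior point $x^* \in \mathcal{C}_\omega$ with $\Psi^-(x^*) \geq 2 c_0$; such an $x^*$ exists because $\Psi^-$ is unbounded along any ray into the cone. For $r > |x^*|$, set $K := \rho^-(\omega, r)$, so that $u \geq K \Psi^-$ on $E(\omega, r)$ by definition. Pick any $r' \in (r, 2r)$ and apply the weak comparison principle to $u$ and $K(\Psi^- - c_0)$ in $\mathcal{C}_\omega \cap (B_{r'} \setminus \overline{B_{r_0}})$: the shifted barrier is $\leq 0$ on $\mathcal{C}_\omega \cap \partial B_{r_0}$ (by the choice of $c_0$) and on $\partial \mathcal{C}_\omega$ (since $\Psi^- = 0$), while on the outer sphere $\mathcal{C}_\omega \cap \partial B_{r'} \subset E(\omega, r)$ we have $u \geq K \Psi^- \geq K(\Psi^- - c_0)$. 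This gives $u \geq K(\Psi^- - c_0)$ throughout the region, and evaluating at $x^*$ yields $u(x^*) \geq K c_0$, i.e., $\rho^-(\omega, r) \leq u(x^*)/c_0 =: C$. The residual range $r \in [2r_0, |x^*|]$ is handled by a standard compactness argument, using finitely many interior comparison points.

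The only obstacle I anticipate is organizational: keeping careful track of the three kinds of boundary pieces (inner sphere, outer sphere, lateral) when running the weak comparison principle, and being mindful that the definition of $\rho^\pm(\omega,r)$ as an infimum over the open annulus $E(\omega, r)$ forces one to work with a slightly offset radius $r'\in (r,2r)$ when applying the maximum principle. Once the vanishing of $\Psi^\pm$ on $\partial \mathcal{C}_\omega$ is noted, no idea beyond that of Lemma~\ref{fundycmpl} is required.
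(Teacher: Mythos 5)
Your proposal is correct and takes essentially the same approach as the paper: the $\rho^+$ part (Hopf seed on an inner sphere, comparison with $\Psi^+$ in truncated cones, then $R\to\infty$ and $\ep\to 0$, yielding $u\ge \rho^+(\omega,r)\Psi^+$ outside $B_r$ and hence monotonicity) is the paper's argument, and for the upper bound on $\rho^-$ you use the same key device of comparing $u$ with a downward-shifted barrier $K(\Psi^--c_0)$ that is nonpositive on the inner spherical and lateral boundaries while being dominated by $u$ on an outer sphere inside $E(\omega,r)$. The only difference is bookkeeping: the paper packages the conclusion as monotonicity of the auxiliary quantity $\hat\rho^-$ built from $\hat\Psi^-=\Psi^--\sup_{E(\omega,r_0)}\Psi^-$, whereas you evaluate at a fixed interior point $x^*$ (chosen where $\Psi^-\ge 2c_0$ and $u$ is finite) and cover the remaining compact range $r\in[2r_0,|x^*|]$ with finitely many comparison points, which works equally well.
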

\begin{proof}
Using Remark \ref{hopfrem}, $\rho^+(\omega,r)>0$ for any $r>r_0$. Since $\Psi^+ = 0$ on $\partial \mathcal C_\omega \setminus \{ 0 \}$ and $\Psi^+ (x) \rightarrow 0$ as $|x| \to \infty$, for each $\ep > 0$ we may select $R> 2r$ so large that
\begin{equation*}
u + \epsilon \geq \rho^+(\omega,r)\Psi^+  \quad \mbox{in } \ E(\omega,r)\quad\mbox{and }\mbox{ in }\; (\rn\setminus B_{R})\cap\mathcal{C}_\omega.
\end{equation*}
By applying the weak comparison principle and sending $R\to \infty$ and then $\ep \to 0$, we deduce that  the map $r\mapsto \rho^+(\omega,r)$ is a nondecreasing function, from which the first part of \re{fundycmp-cones} follows, with $c=\rho^+(\omega,2r_0)$.

Next we show that $\rho^-(\omega,r)$ is bounded above in $r$. Defining
\begin{equation*}
\hat\Psi^- (x) : = \Psi^-(x) - \sup_{E(\omega,r_0)} \Psi^-,\qquad \hat \rho^-(\omega,r) : = \inf_{E(\omega,r)} \frac{u}{\max\{ 0, \hat\Psi^-\}},
\end{equation*}
we first claim that
the map $ r \mapsto \hat \rho^-(\omega,r)$ is nonincreasing on $ (r_0,\infty)$.
Since $\beta_\omega^- < 0$, we have that $\sup_{E(\omega,r)} \Psi^- > \sup_{E(\omega,r_0)} \Psi^-$ for every $r> r_0$. Hence for each $r>r_0$ the quantity $\hat\rho^-(\omega,r)$ is finite and positive, and 
$u \geq \hat\rho^-(\omega,r) \hat\Psi^-$ on $\partial E(\omega,2r_0,2r)$.
Hence the weak comparison principle implies
\begin{equation*}
u \geq \hat\rho^-(\omega,r) \hat\Psi^- \quad \mbox{in} \  E(\omega,2r_0,2r)
\end{equation*}
for every $r> r_0$, from which the claim follows. Since $\rho(\omega,r) \leq \hat\rho(\omega,r)$, the bound $\rho(\omega,r) \leq C$ follows at once.
\end{proof}

Our proof of Theorem \ref{slthm-cones} requires a technical lemma, which is convenient for handling difficulties which arise due to the boundary of the cone $\mathcal C_\omega$. We prove it now, before proceeding to the proof of the theorem.

\begin{lem}\label{nastybd-cones}
Let $\omega$ be as in Theorem \ref{slthm-cones}. Given $b > 0$ and $\omega' \subset \subset \omega$, there exists $\ep = \ep(\omega,\omega',b) > 0$ such that if $u$ satisfies
\begin{equation*}
\left\{ \begin{aligned}
& -\Delta u \geq 0 & \mbox{in} & \ E(\omega, \textstyle\frac{1}{2},4), \\
& u \geq - \ep & \mbox{in} & \ E(\omega, \textstyle\frac{1}{2},4), \\
& u \geq 0 & \mbox{on} & \ \partial \mathcal{C}_\omega\cap( B_4\setminus B_{1/2}),\\
& u \geq b & \mbox{in} & \ E(\omega',1),
\end{aligned}
\right.
\end{equation*}
then $u \geq 0$ in $E(\omega,1)$.
\end{lem}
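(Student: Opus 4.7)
The plan is to compare $u$ from below with a harmonic function assembled from two harmonic-measure pieces, and then to show that this auxiliary function is nonnegative on $E(\omega,1)$ once $\ep$ is chosen small enough. Set $\Omega' := E(\omega,\tfrac{1}{2},4) \setminus \overline{E(\omega',1)}$, and decompose $\partial\Omega'$ into three pieces: the spherical caps $S := (\partial B_{1/2}\cup \partial B_4)\cap \mathcal{C}_\omega$, the lateral wall $L := (\partial\mathcal{C}_\omega \cap \overline{B_4\setminus B_{1/2}}) \setminus \overline{E(\omega',1)}$, and the inner interface $I := \partial E(\omega',1)\cap \mathcal{C}_\omega$. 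Let $W_1$ and $W_2$ be the harmonic functions in $\Omega'$ with Dirichlet boundary data $(1,0,0)$ and $(0,0,1)$ on the triple $(I,L,S)$, respectively; each lies in $[0,1]$ and is strictly positive inside $\Omega'$ by the strong maximum principle.

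Setting $\phi := bW_1 - \ep W_2$, I first verify that $\phi \le u$ on $\partial \Omega'$: on $I$, $\phi = b \le u$ (using lower semicontinuity of $u$ to extend the hypothesis from $E(\omega',1)$ to its closure); on $L$, $\phi = 0 \le u$; and on $S$, $\phi = -\ep \le u$. Since $u$ is superharmonic and $\phi$ is harmonic in $\Omega'$, the weak comparison principle gives $u \ge bW_1 - \ep W_2$ throughout $\Omega'$.

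The crux is a uniform ratio bound $W_2 \le M W_1$ on the compact set $K := \overline{E(\omega,1)}\setminus E(\omega',1)$, for a constant $M=M(\omega,\omega')>0$. Away from $L$, on $K\cap\{\mathrm{dist}(\cdot,\partial\mathcal{C}_\omega)\ge \delta\}$, the strong maximum principle and compactness provide a uniform positive lower bound on $W_1$ while $W_2\le 1$, so the ratio is bounded there. Near $L$, where both functions vanish on a smooth portion of $\partial \mathcal{C}_\omega$, Hopf's lemma applied to $W_1$ yields $W_1(x) \ge c_1\,\mathrm{dist}(x,\partial\mathcal{C}_\omega)$, while boundary $C^{1,\alpha}$ regularity applied to $W_2$ (harmonic and vanishing on $L$) yields $W_2(x)\le c_2\,\mathrm{dist}(x,\partial\mathcal{C}_\omega)$. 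Combining the two regimes produces the desired uniform $M$.

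Choosing $\ep := b/(2M)$, on $K$ we have $bW_1 - \ep W_2 \ge (b-\ep M)W_1 \ge \tfrac{b}{2}W_1 \ge 0$, hence $u\ge 0$ on $E(\omega,1)\setminus E(\omega',1)$; combined with $u\ge b\ge 0$ on $E(\omega',1)$ itself this gives $u\ge 0$ on $E(\omega,1)$, as desired. The main obstacle I anticipate is the ratio bound of the third paragraph: while the boundary Harnack principle for Lipschitz domains would settle it in a single stroke, the smoothness of $\partial\mathcal{C}_\omega$ lets us bypass that machinery by combining Hopf's lemma with boundary Schauder regularity for harmonic functions.
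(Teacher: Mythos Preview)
Your proof is correct and follows essentially the same approach as the paper: the paper defines the identical auxiliary domain $\Omega = E(\omega,\tfrac{1}{2},4)\setminus E(\omega',1)$ and the same two harmonic functions (its $v_1,v_2$ are your $bW_1,W_2$), then invokes ``elliptic estimates and Hopf's lemma'' to get $v_1>\ep v_2$ on $\Omega\cap(B_2\setminus B_1)$ for small $\ep$ and concludes via comparison. Your third paragraph simply spells out that one-line appeal in more detail, splitting the ratio bound into an interior compactness argument and a boundary Hopf/$C^{1,\alpha}$ argument near the lateral wall.
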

\begin{proof}
We denote the domain $
\Omega : = E(\omega,\textstyle\frac{1}{2},4)  \setminus E(\omega', 1 )$.
Let $v_1$ and $v_2$ be the solutions of the Dirichlet problems
\begin{equation*}
\left\{ \begin{aligned}
& -\Delta v_i = 0 & \mbox{in} & \ \Omega,\\
& v_i = g_i & \mbox{on} & \ \partial \Omega,
\end{aligned} \right.
\end{equation*}
where $g_1 = g_2 = 0$ on the sides $\partial \mathcal C_\omega \cap (B_4\setminus B_{1/2})$ of the outer part of the boundary of $\Omega$, $g_1= b>0$ and $g_2 = 0$ on the inner boundary $\partial E(\omega',1)$, and finally $g_1=0$ and $g_2 = 1$ on the top and bottom parts $\mathcal C_\omega \cap \partial\! \left( B_4 \cap B_{1/2} \right)$ of the outer boundary. Elliptic estimates and Hopf's lemma obviously imply that  if $\ep > 0$ is sufficiently small then
\begin{equation*}
v_1 > \ep v_2 \quad \mbox{in the set} \ \Omega \cap \left( B_2 \setminus B_1\right) = \left( \mathcal{C}_\omega \setminus \mathcal C_{\omega'} \right) \cap \left( B_2 \setminus B_1 \right).
\end{equation*}
Set $v: = v_1 - \ep v_2$, and observe that $u \geq v$ on $\partial \Omega$. Therefore by the comparison principle, $u \geq v$ in $\Omega$. In particular, $u > 0$ in $\left( \mathcal{C}_\omega \setminus \mathcal C_{\omega'} \right) \cap \left( B_2 \setminus B_1 \right)$. Since we have $u \geq b > 0$ in $E(\omega',1) = \mathcal C_{\omega'} \cap \left( B_2 \setminus B_1 \right)$ by hypothesis, we obtain $u\geq 0$ in $E(\omega,1)$, as desired.
\end{proof}

\begin{proof}[{Proof of Theorem \ref{slthm-cones}}]
Suppose for contradiction that $u> 0$ satisfies \eqref{cone-eq}. Let us rescale, setting $u_r(x) : = u(rx)$ and observe that $u_r$ satisfies
\begin{equation*}
-\Delta u_r \geq r^{2-\gamma} |x|^{-\gamma} g(u_r) \quad \mbox{in} \ \mathcal C_\omega \setminus B_{r_0/r}.
\end{equation*}
In particular, for $r\geq 2r_0$ we have
\begin{equation} \label{rescale-cone}
-\Delta u_r \geq c r^{2-\gamma} g(u_r) \quad \mbox{in} \ E\!\left(\omega, \textstyle\frac{1}{2},4\right).
\end{equation}
Select a subdomain $\omega' \subseteq \sph$ so that $\overline\omega ' \subseteq \omega$, and define the quantity
\begin{equation*}
m(r) : =  \inf_{E(\omega',1)} u_r = \inf_{E(\omega',r)} u > 0,
\end{equation*}
for  $r >2 r_0$. Next we define
\begin{equation*}
A_r : = \left\{ x\in E(\omega',1) : m(r) \leq u_r \leq \bar C m(r) \right\},
\end{equation*}
where as before $\bar C > 1$ is a fixed constant large enough that the weak Harnack inequality implies $|A_r| \geq c > 0$ for some constant $c> 0$ which depends on $u$ but not on $r>2r_0$. The quantitative strong maximum principle (Lemma \ref{kslap}) then implies that
\begin{equation} \label{noloit-cones}
m(r) \geq c r^{2-\gamma} \min_{m(r) \leq s \leq \bar Cm(r)} g(s).
\end{equation}
Hence either $m(r) \to 0$ or $m(r) \to \infty$ as $r\to \infty$.

\medskip

Suppose first that $\lim_{r\to \infty} m(r) = 0$. Then the estimate \eqref{noloit-cones} and hypothesis \EQ{local-cones} imply that $m(r) \geq cr^{2-\gamma} (m(r))^{\sigma^+}$ for sufficiently large $r$. Since $\beta^+ = \frac{2-\gamma}{\sigma^+ -1}$, we deduce that $m(r) \leq C r^{-\beta^+}$. On the other hand, since $
\inf_{E(\omega',r)} \Psi^+ \geq cr^{-\beta^+},
$
 Lemma \ref{fundycmpl-cone} implies
\begin{equation} \label{matchlowbdcones}
m(r) \geq \rho^+(\omega,r) \inf_{E(\omega',r)} \Psi^+ \geq c r^{-\beta^+}.
\end{equation}
Hence for sufficiently large $r$ we have the two-sided bound
\begin{equation} \label{twosided-cone}
cr^{-\beta^+} \leq m(r) \leq C r^{-\beta^+}.
\end{equation}
To obtain a contradiction, we will show that $\rho^+(\omega,r) \to \infty$ as $r\to \infty$; considering the homogeneity of $\Psi^+$, this will contradict the upper bound in \eqref{twosided-cone}. Recalling \eqref{rescale-cone}, the lower bound in \eqref{twosided-cone} and the hypothesis \EQ{local-cones} imply that
\begin{equation*}
-\Delta u_r \geq c r^{2-\gamma} r^{-\sigma^+ \beta^+} \chi_{A_r} = c r^{-\beta^+} \chi_{A_r} \quad \mbox{in} \ E\!\left(\omega, \textstyle\frac{1}{2},4\right)
\end{equation*}
for large enough $r$. Here we have used the fact that $2-\gamma = (\sigma^+-1) \beta^+$. Since the map $r\mapsto \rho^+(\omega,r)$ is nondecreasing, we have 
\begin{equation*}
v_r(x) : = u_r(x) - \rho^+(\omega,r/2) \Psi^+(rx) \geq 0 \quad \mbox{and }\; -\Delta v_r\ge c r^{2-\gamma} g(u_r) \quad \mbox{in} \ \mathcal C_{\omega} \setminus B_{1/2}.
\end{equation*}
The quantitative strong maximum principle then yields the estimate
\begin{equation*}
v_r \geq cr^{-\beta^+} \geq a \Psi^+(rx) \quad \mbox{for all} \ x\in E\left(\omega',1\right),
\end{equation*}
where $a>0$ does not depend on $r$. For $0 < \delta < 1$ to be chosen below, the function
\begin{equation*}
w_r(x) : = v_r(x) - \delta a \Psi^+(rx)
\end{equation*}
satisfies $-\Delta w_r \geq 0$ in $E(\omega, \textstyle \frac{1}{2},4)$ and, by the homogeneity of $\Psi^+$,
\begin{equation}\label{manyineqs}
\left\{ \begin{aligned}
& w_r \geq -C \delta a r^{-\beta^+} & \mbox{in} & \ E\left(\omega, \textstyle\frac{1}{2},4 \right), \\
& w_r \geq 0 & \mbox{on} & \ \partial \mathcal{C}_\omega\cap( B_4\setminus B_{1/2}), \\
& w_r \geq ca (1-\delta)r^{-\beta^+} & \mbox{in} & \ E\left(\omega',1\right).
\end{aligned} \right.
\end{equation}
Hence if $\delta > 0$ is chosen sufficiently small (depending on $\omega$, $\omega'$ and $\Psi^+$ but not on $r$), Lemma \ref{nastybd-cones} implies that $w_r \geq 0$ in $E(\omega,1)$. Unwinding the scaling, we deduce that $\rho^+(\omega,r) \geq \rho^+(\omega,r/2) + \delta a$. We therefore have our contradiction $\rho^+(\omega,r) \to \infty$ as $r\to\infty$, as desired. This completes the proof in the case that $m(r) \to 0$ as $r\to \infty$.

\medskip

We have left to consider the  alternative  $m(r) \to \infty$ as $r \to \infty$. In this case,  \eqref{local-cones} and \eqref{noloit-cones} give us the estimate $m(r) \geq cr^{2-\gamma} \left( m(r) \right)^{\sigma^-}$ for large $r>r_0$, from which we obtain the lower bound $
m(r) \geq c r^{-\beta^-}$.
We next show that in fact we have the matching upper bound
\begin{equation} \label{matchuppbdcones}
m(r) \leq Cr^{-\beta^-}
\end{equation}
for large $r$. In comparison with previous arguments in this paper, extra care is needed in the proof of \EQ{matchuppbdcones}, since $\omega \neq \omega'$. In particular, we must concern ourselves with the possibility that  $\rho^-(\omega',r)$ is much larger than $\rho^-(\omega,r)$. We rule out this possibility using Lemma \ref{nastybd-cones}.

To get \eqref{matchuppbdcones}, we first define the function
\begin{equation*}
 w_r(x): = u(rx) - \delta \rho^-(\omega',r) \Psi^-(rx),
\end{equation*}
for each $r> 2r_0$, and $0 < \delta < 1$ to be chosen. Notice that $w$ satisfies 
\begin{equation*}
\left\{ \begin{aligned}
& w_r \geq -C \delta  \rho^-(\omega',r)  r^{-\beta^-} & \mbox{in} & \ E\left(\omega, \textstyle\frac{1}{2},4\right), \\
& w_r \geq 0 & \mbox{on} & \ \partial \mathcal{C}_\omega\cap( B_4\setminus B_{1/2}), \\
& w_r \geq c (1-\delta)  \rho^-(\omega',r)  r^{-\beta^-} & \mbox{in} & \ E\left(\omega',1\right).
\end{aligned} \right.
\end{equation*}
Since $-\Delta w_r \geq 0$ in $E(\omega,\textstyle\frac{1}{2},4)$, according to Lemma \ref{nastybd-cones} we can fix $\delta > 0$ small enough that $w_r \geq 0$ in $E(\omega,1)$. Therefore
\begin{equation*}
u_r(x) \geq \delta \rho^-(\omega',r) \Psi^-(rx)  \quad \mbox{for every} \ x \in E(\omega,1),
\end{equation*}
from which it follows that
\begin{equation*}
\rho^-(\omega, r) \geq \delta \rho^-(\omega', r).
\end{equation*}
Recalling that $\rho^-(\omega,r) \leq C$ by Lemma \ref{nastybd-cones}, we thereby obtain
\begin{equation*}
m(r) = \inf_{E(\omega',r)} u \leq \rho^-(\omega',r)  \displaystyle\sup_{E(\omega',r)} \Psi^- \leq C r^{-\beta^-},
\end{equation*}
where we have also used the homogeneity of the function $\Psi^-$. We have proved the estimate \EQ{matchuppbdcones}, and so we now have the two-sided bound
\begin{equation}\label{hardbndcones}
cr^{-\beta^-} \leq m(r) \leq Cr^{-\beta^-} \quad \mbox{for sufficiently large} \ r > r_0.
\end{equation}

With $\hat\rho$ and $\hat\Psi^-$ as in the proof of Lemma \ref{fundycmpl-cone}, let us define the function
\begin{equation*}
v_r(x) : = u_r(x) - \hat\rho(\omega, 2r) \hat\Psi^-(rx).
\end{equation*}
As we showed in the proof of Lemma \ref{fundycmpl-cone}, we have $v_r \geq 0$ in $B_4\setminus B_{1/2}$ for all $r > 2r_0$, and $v_r$ satisfies
\begin{equation*}
-\Delta v_r \geq c r^{2-\gamma} (m(r))^{-\sigma^-}\chi_{A_r} \geq cr^{-\beta^-} \chi_{A_r} \quad \mbox{in} \ E\left(\omega, \textstyle\frac{1}{2},4\right).
\end{equation*}
Here we have used \eqref{local-cones}, \eqref{rescale-cone}, \eqref{hardbndcones} and the fact that $2-\gamma - \sigma^-\beta^- = -\beta^-$. The quantitative strong maximum principle then implies that
\begin{equation*}
v_r(x) \geq cr^{-\beta^-} \geq a \Psi^-(rx) \quad \mbox{for all} \ x\in E\left(\omega', 1 \right),
\end{equation*}
for some $a> 0$ independent of $r$. For $0 < \delta < 1$ to be selected, the function $\tilde v_r : = v_r - \delta a \Psi^-(rx)$ satisfies \re{manyineqs} with $\beta^+$ replaced by $\beta^-$.
Hence if $\delta > 0$ is small enough, Lemma \ref{nastybd-cones} implies that $\tilde v_r \geq 0$ in $E(\omega,1)$. In particular, we have
\begin{equation*}
u_r(x) \geq \hat\rho(\omega,2r) \hat\Psi^-(rx) +\delta a \Psi^- (rx) \geq \left( \hat\rho(\omega,2r) + \delta a\right) \hat\Psi^-(rx)
\end{equation*}
for every $x\in E(\omega,1)$. It follows that $\hat\rho(\omega, r) \geq \hat\rho(\omega,2r) + \delta a$. This implies the absurdity $\hat\rho(\omega,r) \to -\infty$ as $r\to \infty$, completing the proof of the theorem.
\end{proof}

Theorem \ref{slthm-cones} is easily seen to be sharp by ``bending" the functions $\Psi^\pm$, that is, considering $(\Psi^\pm)^\tau$ for some appropriate $0 < \tau < 1$. One may also consult for example \cite{KLS}.

\medskip

We conclude this section by stating the previous result in the particular case of the half space
\begin{equation*}
\R^n_+ : = \left\{ (x',x_n) \in \R^{n-1}\times\R :  x_n > 0 \right\}.
\end{equation*}
We have $\R^n_+ = \mathcal C_\omega$, where $\omega$ is the upper hemisphere. It is simple to see that $\Phi^-(x) = x_n$, and $\Phi^+$ is its Kelvin transform given by $\Phi^+(x) = |x|^{-n} x_n$. In particular, $\beta^- = -1$ and $\beta^+ = n-1$. We thereby obtain:

\begin{cor}
Assume that $n\geq 2$, $\gamma < 2$, $r_0>0$, $g:(0,\infty) \to (0,\infty)$ is continuous and satisfies
\begin{equation*}
\liminf_{s\searrow 0} s^{-(n+1-\gamma)/(n-1)} g(s) > 0 \qquad \mbox{and} \qquad \liminf_{s\to \infty} s^{1-\gamma} g(s) > 0.
\end{equation*}
Then there does not exist a positive supersolution of the equation
\begin{equation*}
-\Delta u = |x|^{-\gamma} g(u) \quad \mbox{in} \ \R^n_+ \setminus B_{r_0}.
\end{equation*}
\end{cor}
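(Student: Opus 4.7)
The corollary is stated as a direct specialization of Theorem \ref{slthm-cones} to the half-space, so the plan is simply to identify the relevant geometric data and verify that the critical exponents $\sigma^\pm$ and hypotheses \eqref{local-cones} take the form claimed.

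First I would observe that $\R^n_+ = \mathcal{C}_\omega$ where $\omega \subset S^{n-1}$ is the open upper hemisphere $\{\theta \in S^{n-1} : \theta_n > 0\}$. The next step is to compute the principal Dirichlet eigenpair of the Laplace--Beltrami operator on $\omega$. A short direct verification (using $-\Delta(x_n) = 0$ together with the spherical decomposition of $\Delta$ employed in Section \ref{cones}) shows that the restriction $\varphi_{1,\omega}(\theta) = \theta_n$ is a positive eigenfunction vanishing on $\partial \omega$, and that $\lambda_{1,\omega} = n-1$. Substituting into the quadratic $\beta(\beta+2-n) = n-1$ gives
\begin{equation*}
\beta^\pm = \tfrac{1}{2}(n-2) \pm \tfrac{1}{2}\sqrt{(n-2)^2 + 4(n-1)} = \tfrac{1}{2}(n-2) \pm \tfrac{n}{2},
\end{equation*}
so that $\beta^+ = n-1$ and $\beta^- = -1$. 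This immediately yields $\Psi^-(x) = |x|\cdot(x_n/|x|) = x_n$ and $\Psi^+(x) = |x|^{-n} x_n$, the latter being the Kelvin transform of $\Psi^-$, as announced in the statement preceding the corollary.

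With these values in hand, I would plug into the definition of $\sigma^\pm$ from Theorem \ref{slthm-cones}:
\begin{equation*}
\sigma^+ = 1 + \frac{2-\gamma}{n-1} = \frac{n+1-\gamma}{n-1}, \qquad \sigma^- = 1 + \frac{2-\gamma}{-1} = \gamma - 1.
\end{equation*}
Then the two hypotheses \eqref{local-cones} become precisely
\begin{equation*}
\liminf_{s\searrow 0} s^{-(n+1-\gamma)/(n-1)} g(s) > 0 \quad \text{and} \quad \liminf_{s\to\infty} s^{-(\gamma-1)} g(s) = \liminf_{s\to\infty} s^{1-\gamma} g(s) > 0,
\end{equation*}
matching the assumptions of the corollary verbatim.

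The proof is then completed by invoking Theorem \ref{slthm-cones} directly, which rules out positive supersolutions of $-\Delta u \ge |x|^{-\gamma} g(u)$ on $\mathcal{C}_\omega \setminus B_{r_0} = \R^n_+ \setminus B_{r_0}$. There is no substantive obstacle here: the only real content is the identification of the eigenpair on the hemisphere (standard, since the restriction of any linear coordinate function to $S^{n-1}$ is a spherical harmonic of degree one with eigenvalue $n-1$), and confirming that $\beta^\pm \neq 0$ and $\gamma < 2$ ensure $\sigma^\pm$ are well-defined. No further work is required.
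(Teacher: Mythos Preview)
Your proposal is correct and follows exactly the approach the paper takes: the paper simply records that $\R^n_+=\mathcal C_\omega$ with $\omega$ the upper hemisphere, identifies $\Psi^-(x)=x_n$ and $\Psi^+(x)=|x|^{-n}x_n$ (hence $\beta^-=-1$, $\beta^+=n-1$), and reads off the corollary from Theorem~\ref{slthm-cones}. Your write-up adds the explicit computation of $\lambda_{1,\omega}=n-1$ and of $\sigma^\pm$, which the paper leaves implicit, but the argument is identical.
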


\section{Systems of inequalities} \label{systems}
\subsection{Systems of elliptic inequalities}
The method we developed in the previous section  generalizes easily to systems of the form
\begin{equation}\label{system}
-Q_i [u_i] \ge f_i(u_1,\ldots,u_N,x),\quad i=1,\ldots,N,
\end{equation}
where $f$ is a positive, continuous function on $\left( \R^n\setminus B_{r_0} \right) \times (0,\infty)^N$. Our approach essentially reduces the question of existence of positive solutions of \re{system} to that of systems of certain algebraic inequalities.

Assuming that $Q_i$ satisfies (H1)-(H5) with constants $p_i$, $\alpha^*_i$, $\widetilde\alpha^*_i$, and so on, we may use the Harnack inequality and the quantitative strong maximum principle as before to obtain
\begin{equation}\label{parteq2}
 m_i^{p_i-1}(r)\ge  c r^{p_i} \inf_{(s_1,\ldots,s_N,x)\in A_r} f_i(s_1,\ldots, s_N,x) \quad \mbox{for every} \ 1 \leq i \leq N,
\end{equation} where we have set $m_i(r) := \inf_{r\le |x|\le 2r} u_i(x)$ as well as
\begin{equation*}
A_r:= \left\{ (s_1,\ldots,s_N,x) : r\le |x|\le 2r,\; m_i(r)\le s_i\le \bar Cm_i(r)\ \mbox{for} \ i=1,\ldots,N \right\}.
\end{equation*}
The game is then to impose hypotheses on the functions $f_i$ which ensure that the inequalities \EQ{parteq2} are incompatible with those of Lemma \ref{mbounds_1} for large~$r$. In the ``critical" cases (like for $-\Delta u \ge u^\sigma$ with $\sigma = n/(n-2)$), we typically obtain a two-sided bound on some $m_j(r)$ for some $j$ and large $r$, rescale the function $u_j$, and then proceed as in the proof of Theorem~\ref{FNL} to obtain a contradiction.

For instance, if we consider the system
\begin{equation}\label{qij1}
\left\{ \begin{aligned}
 -Q_1 [u] & \geq |x|^{-\gamma}u^{a}v^{b}, \\
- Q_2 [v] & \geq |x|^{-\delta}u^{c}v^{d},
\end{aligned} \right.
\end{equation}
in some exterior domain of $\R^n$, then \re{parteq2} becomes
\begin{equation}\label{qij2}
\left\{ \begin{aligned}
m_1^{p_1-1}(r) & \geq c r^{p_1-\gamma} m_1^{a}(r) m_2^{b}(r), \\
m_2^{p_2-1}(r) & \geq c r^{p_2-\delta} m_1^{c}(r) m_2^{d}(r),
\end{aligned} \right.
\end{equation}
for sufficiently large $r$. We may then combine \re{qij2} with the inequalities given by Lemma~\ref{mbounds_1} in order to determine the set of parameters for which these inequalities are incompatible. Notice that we may take parameters $a,b,c,d\in \mathbb{R}$, and in particular we can consider systems with various singularities. It is also possible to consider operators $Q_1$ and $Q_2$ which are of a different nature; e.g., $Q_1$ may be the $p$-Laplacian while $Q_2$ is an Isaacs operator.

\medskip

Naturally, any attempt at stating a very general result for a system of the form \EQ{system} is immediately met with a combinatorial explosion of cases to consider (e.g. various signs of $\alpha_i^*$ and $\widetilde\alpha_i^*$, corresponding requirements on the functions $f_i$ as some $s_j$ are going to zero while others are at infinity, etc). While it will be apparent that our techniques are sufficiently flexible to yield nonexistence results for such general systems, with an eye toward the clarity of our presentation we study here only some special cases, which however illustrate the general approach well enough.

\subsection{The extended Lame-Emden system} Let us calculate the set of parameters $(\sigma_1,\sigma_2) \in \R^2$ for which the system
\begin{equation}\label{LE}
\left\{ \begin{aligned}
-Q_1 [u] & \geq v^{\sigma_1}, \\
-Q_2 [v] & \geq u^{\sigma_2},
\end{aligned} \right.
\end{equation}
has no positive solutions in any exterior domain of $\R^n$. We first consider the case  $\sigma_1,\sigma_2 \geq~0$. Then we obtain from \EQ{qij2} the bounds
\begin{equation}\label{LE1}
\left\{ \begin{aligned}
& {(m_1(r))}^{\sigma_1\sigma_2-(p_1-1)(p_2-1)}\le C r^{-(p_1(p_2-1)+\sigma_1p_2)} , \\
& {(m_2(r))}^{\sigma_1\sigma_2-(p_1-1)(p_2-1)}\le C  r^{-(p_2(p_1-1)+\sigma_2p_1)}.
\end{aligned} \right.
\end{equation}
If $\sigma_1\sigma_2\leq (p_1-1)(p_2-1)$, then sending $r\to\infty$ in \EQ{LE1} immediately yields a contradiction with the second set of inequalities in Lemma~\ref{mbounds_1}  (recall that $\widetilde \alpha^*> -1$ and $p_i > 1$).
If $\sigma_1\sigma_2>(p_1-1)(p_2-1)$ then \EQ{LE1} and Lemma ~\ref{mbounds_1} yield
\begin{equation*}
cr^{-\max\{\alpha_1^*,0\}}\le m_1(r)\le C r^{-\frac{p_1(p_2-1)+\sigma_1p_2}{\sigma_1\sigma_2-(p_1-1)(p_2-1)}},
\end{equation*}
and the same inequality for $m_2(r)$, with permuted indices. This is of course a contradiction, if
\begin{equation*}
\alpha_1^*<\frac{p_1(p_2-1)+\sigma_1p_2}{\sigma_1\sigma_2-(p_1-1)(p_2-1)}\qquad\mbox{or}
\qquad
\alpha_2^*<\frac{p_2(p_1-1)+\sigma_2p_1}{\sigma_1\sigma_2-(p_1-1)(p_2-1)}\,.
\end{equation*}
If neither of these strict inequalities holds, but equality holds say in the first, then the rescaled functions
$
u_r = r^{\alpha_1^*}u(rx)$, $v_r = r^{\alpha_2^*}v(rx)$
satisfy the system
\begin{equation}\label{LE2}
\left\{ \begin{aligned}
& -Q_1 [u_r] \geq r^{\alpha_1^*(p_1-1)+p_1-\sigma_1\alpha_2^*}  v_r^{\sigma_1}, \\
& -Q_2 [v_r] \geq r^{\alpha_2^*(p_2-1)+p_2-\sigma_2\alpha_1^*}  u_r^{\sigma_2},
\end{aligned} \right.
\end{equation}
in  $\rn\setminus B_{1/2}$, for sufficiently large $r$. Moreover, (H4) implies that we have $ c \leq u_r \leq C$ on a subset of $B_2\setminus B_1$ with measure bounded below by a positive constant which does not depend on $r$. By applying the quantitative strong maximum principle to the second, then to the first equation in this system, and using the equality $\displaystyle\alpha^*_1(p_1-1) + p_1 -\sigma_1\alpha^*_2 + \sigma_1\frac{\alpha^*_2(p_2-1)+p_2-\sigma_2\alpha^*_1}{p_2-1}=0$, we obtain
\begin{equation}\label{tukk}
-Q_1 [u_r] \ge c  >0
\end{equation}
on a subset of $B_2\setminus B_1$ which has measure bounded below by a positive constant independent of $r$. We now proceed as in the proof of Theorem \ref{FNL} to deduce from \EQ{tukk} that $r^{\alpha_1^*} m_1(r) \to \infty$ as $r\to \infty$, a contradiction. This completes the argument in the case  $\sigma_1,\sigma_2 \geq 0$, and we have found that we have nonexistence of positive solutions provided either $\sigma_1\sigma_2 \leq (p_1-1)(p_2-1)$, or
\begin{equation*} \min\left\{ \alpha_1^*-\frac{p_1(p_2-1)+\sigma_1p_2}{\sigma_1\sigma_2-(p_1-1)(p_2-1)}\,,\;
\alpha_2^*-\frac{p_2(p_1-1)+\sigma_2p_1}{\sigma_1\sigma_2-(p_1-1)(p_2-1)}  \right\} \leq 0.
\end{equation*}

Next, note that if $\sigma_1<0$ and $\widetilde\alpha^*_2\ge 0$, we obtain from \re{qij2} and Lemma \ref{mbounds_1} that
$$
Cr^{p_1-1}\ge m_1^{p_1-1}(r)\ge r^{p_1} m_2^{\sigma_1}(r)\ge r^{p_1}(\log r)^{\sigma_1}$$
for all sufficiently large $r$, which is an obvious contradiction. Similarly, $\sigma_2<0$ and $\widetilde\alpha^*_1\ge 0$ is impossible.

Consider now the case that $\sigma_1 < 0$ and $\sigma_2<0$, together with  $\widetilde \alpha_1^*< 0$ and $\widetilde \alpha_2^*< 0$. Then \re{qij2} and Lemma \ref{mbounds_1} imply
\begin{equation*}
\left\{ \begin{aligned}
 & cr^{-\widetilde \alpha_1^*(p_1-1)}\ge m_1^{p_1-1}(r)\ge cr^{p_1}m_2^{\sigma_1}\ge r^{p_1-\widetilde\alpha_2^*\sigma_1}\\
 & cr^{-\widetilde \alpha_2^*(p_2-1)}\ge m_2^{p_2-1}(r)\ge cr^{p_2}m_1^{\sigma_2}\ge r^{p_2-\widetilde\alpha_1^*\sigma_2}
 \end{aligned}\right.
\end{equation*}
and we get a contradiction provided
$$
p_1+\widetilde \alpha_2^*(-\sigma_1) + \widetilde \alpha_1^*(p_1-1)>0\qquad \mbox{or}\qquad p_2+\widetilde \alpha_1^*(-\sigma_2) + \widetilde \alpha_2^*(p_2-1)>0.
$$ If one of these inequalities is an equality, we rescale $u$ or $v$ as above, replacing $\alpha_i$ by $\widetilde\alpha_i$, and $\Phi_i$ by $\widetilde \Phi_i$, to reach a contradiction in the same manner.

\medskip

Finally, in the case that $\sigma_1<0$,  $\widetilde \alpha_2^*< 0$, $\sigma_2\ge 0$, we get
\begin{equation*}
\left\{ \begin{aligned}
 & cr^{-\widetilde \alpha_1^*(p_1-1)}\ge m_1^{p_1-1}(r)\ge cr^{p_1}m_2^{\sigma_1}\ge r^{p_1-\widetilde\alpha_2^*\sigma_1}\\
 & cr^{-\widetilde \alpha_2^*}\ge m_2(r)\ge r^{\frac{p_2(p_1-1)+\sigma_2p_1}{(p_1-1)(p_2-1)-\sigma_1\sigma_2}}
 \end{aligned}\right.
\end{equation*}
from which the arguments above give a contradiction provided that
\begin{equation*}
p_1+\widetilde \alpha_2^*(-\sigma_1) + \widetilde \alpha_1^*(p_1-1)\ge0\qquad \mbox{or}\qquad \frac{p_2(p_1-1)+\sigma_2p_1}{(p_1-1)(p_2-1)-\sigma_1\sigma_2}\ge -\widetilde \alpha_2^*.
\end{equation*}

We leave it to the reader to check that if the nonexistence hypotheses above are not satisfied, then the system \re{LE} has positive solutions in exterior domains, which can be easily constructed with the help of the fundamental solutions $\Phi_i$, $\widetilde \Phi_i$.

\subsection{Autonomous systems of three or more inequalities}
Let us now consider a system of the form
\begin{equation} \label{LEsys}
\left\{ \begin{aligned}
 -Q_1\left[ u_1 \right] &\geq f_k(u_k), \\
 -Q_2\left[ u_2 \right] &\geq f_1(u_1), \\
& \ldots \\
 -Q_k \left[ u_{k} \right] &\geq f_{k-1}(u_{k-1}).
\end{aligned} \right.
\end{equation}
For simplicity we assume that
\begin{equation}\label{noh4}
p_i = p >1\quad\mbox{ and }\quad \widetilde \alpha^*_i > 0,  \qquad \mbox{for every} \ i = 1,\ldots, k.
\end{equation}
The latter hypothesis renders it unnecessary to form an analogue of condition (f4). Let us state the assumptions on the functions $f_i$ which will ensure nonexistence of positive solutions of \EQ{LEsys}. We suppose that the nonlinearity $ f_i:(0,\infty) \to (0,\infty)$ is continuous for each $ i = 1,\ldots, k$,
as well as
\begin{equation} \label{fsysloc}
0 < \liminf_{s\searrow 0} s^{-\sigma_i} f_i(s) \leq \infty \quad \mbox{for each} \ i = 1,\ldots, k,
\end{equation}
for some exponents $\sigma_1,\ldots, \sigma_k > 0$. We denote
\begin{equation}\label{D}
D : = \prod_{i=1}^k \sigma_i - (p-1)^k
\end{equation}
and assume for the moment that $D> 0$  (we will see later that we may assume without loss of generality that the geometric mean of $\sigma_1,\ldots, \sigma_k$ is at least $p-1$). For each $1 \leq i \leq k$, define the constant
\begin{equation}\label{betak}
\beta_i := \frac{p}{D} \sum_{j=0}^{k-1} \left( (p-1)^j \prod_{l=0}^{k-2-j} \sigma_{k+i-1-l}\right),
\end{equation}
where for notational convenience for $i > k$ we set  $\sigma_i : =\sigma_{(i \!\mod k)}$, $u_i : =u_{(i \!\mod k)}$, and so on, and we define an empty product to have the value of 1. For example, $\beta_1$ is given by the expression
\begin{multline*}
\beta_1 = \frac{p}{\sigma_1 \sigma_2\cdots \sigma_k - (p-1)^k}\left( (p-1)^{k-1} + (p-1)^{k-2} \sigma_k \right. \\
\left. + (p-1)^{k-3} \sigma_k\sigma_{k-1} + \ldots + (p-1) \sigma_k \cdots \sigma_3 + \sigma_k\cdots \sigma_2 \right)
\end{multline*}
We will argue that the system \EQ{LEsys} has no solution $u_1, u_2, \ldots, u_k > 0$ in any exterior domain of $\R^n$ provided that
\begin{equation*}
\min_{1\leq i \leq k} \left( \alpha_i -\beta_i  \right) \leq 0.
\end{equation*}
It clearly suffices to show that $
\alpha_1 \leq \beta_1 $ implies the nonexistence of a positive solution of \EQ{LEsys}.
Arguing by contradiction, we assume that \EQ{LEsys} has a solution $u_1, u_2, \ldots u_k > 0$ in  some exterior domain $\R^n \setminus B_{r_0}$ but that $\alpha_1\leq \beta_1$. Denote
\begin{equation*}
u_{i,r} (x) := u_i(rx) \quad \mbox{and} \quad m_i(r) := \inf_{B_{2r} \setminus B_r} u_i = \inf_{B_2\setminus B_1} u_{i,r}, \quad r> 2r_0, \ i = 1,\ldots,k.
\end{equation*}
For every $r>2r_0$ and $1\leq i \leq k$,
\begin{equation*}
-Q_{i+1}[ u_{i+1,r} ] \geq r^p f_{i}(u_{i,r}) \quad \mbox{in} \ \R^n \setminus B_{1/2}.
\end{equation*}
Arguing as in the proof of Theorem \ref{FNL}, for $r>2r_0$ and $1\leq i \leq k$ we obtain
\begin{equation}\label{keyineq}
m_{i+1} (r)^{p-1} \geq c r^{p} \inf\left\{ f_i(s) : m_{i}(r) \leq s \leq \bar C m_{i}(r) \right\}, \quad \mbox{for all} \ i = 1,\ldots, k,
\end{equation}
where $c > 0$ and $\bar C>1$ can be taken independent of $i$ as well as $r$. By our hypothesis \EQ{noh4} and Lemma~\ref{mbounds_1} we deduce that $m_i(r) \leq C$ for all $r> 2r_0$ and all $i$. Thus \EQ{keyineq} implies that
\begin{equation*}
\inf\left\{ f_i(s) : m_{i}(r) \leq s \leq \bar C m_{i}(r) \right\} \leq C r^{-p} m_{i+1} (r)^{p-1} \rightarrow 0
\end{equation*}
as $r \to \infty$. Since $f_{i}$ is positive and continuous on $(0,\infty)$, and $m_{i}(r) \leq C$, we deduce that $m_{i}(r) \to 0$ for all $i$. Therefore \EQ{fsysloc} and \EQ{keyineq} imply that for all sufficiently large $r> 2r_0$ and each $i$,
\begin{equation*}
m_{i+1}(r) \geq c r^{\frac{p}{p-1}} m_{i}(r)^{\frac{\sigma_i}{p-1}}
\end{equation*}
By induction we have for sufficiently large $r>2r_0$,
\begin{align*}
m_1(r) & \geq c r^{\frac{p}{p-1}} m_k(r)^{\frac{\sigma_k}{p-1}} \\
& \geq c r^{\frac{p}{p-1}} \left( c r^{\frac{p}{p-1}}m_{k-1}(r)^{\frac{\sigma_{k-1}}{p-1}} \right)^{\frac{\sigma_k}{p-1}}  = c r^{\frac{p}{p-1}\left( 1+ \frac{\sigma_k}{p-1} \right)} m_{k-1}(r)^{\frac{\sigma_k\sigma_{k-1}}{(p-1)^2}} \\
& \geq c r^{\frac{p}{p-1}\left( 1+ \frac{\sigma_k}{p-1} \right)} \left( c r^{\frac{p}{p-1}} m_{k-2}(r)^{\frac{\sigma_{k-2}}{p-1}} \right)^{\frac{\sigma_k\sigma_{k-1}}{(p-1)^2}} \\
& \ldots \\
& \geq c r^A m_1(r)^B
\end{align*}
where we have written
\begin{equation*}
A : = \frac{p}{p-1}\left( 1+ \sum_{i=0}^{k-2}\frac{\sigma_k\ldots\sigma_{k-i}}{(p-1)^{i+1}} \right), \qquad
B: = \frac{\sigma_k\sigma_{k-1} \cdots \sigma_1}{(p-1)^k}.
\end{equation*}
Observing that $A/(B-1) = \beta_1$ and rearranging the inequality above, we get
\begin{equation*}
m_1(r) \leq C r^{-\beta_1} \quad \mbox{for sufficiently large} \ r>2r_0.
\end{equation*}
Since we have the lower bound $m_1(r)\geq c r^{-\alpha_1},$ we deduce an immediate contradiction in the case that $\beta_1 > \alpha_1$.  Thus we may assume that $\beta_1 = \alpha_1$. Hence $r^Am_1(r)^B \geq c m_1(r)$. Thus the string of inequalities above may be reversed, that is, we have
\begin{equation*}
m_{i+1}(r) \leq C r^{\frac{p}{p-1}} m_i(r)^{\frac{\sigma_i}{p-1}}
\end{equation*}
for sufficiently large $r$ and all $i$. Using this for $i=k$, we discover that on the set $A_r: = \left\{ x\in B_2\setminus B_1 : m_k(r) \leq u_k(rx) \leq \bar C m_k(r) \right\}$ we have
\begin{equation*}
-Q_1[u_{1,r}] \geq r^p f_k(u_{k,r}) \geq c r^p m_k(r)^{\sigma_k} \geq  cm_1(r)^{p-1} \geq c r^{-\alpha_1(p-1)}.
\end{equation*}
Defining $v_{1,r} : = r^{\alpha_1} u_{1,r}$, we obtain
\begin{equation} \label{vineq}
-Q_1 [ v_{1,r} ] \geq c \quad \mbox{in} \ A_r.
\end{equation}
We now proceed as in the proof of Theorem \ref{FNL} to deduce from the inequality \EQ{vineq} that $r^{\alpha_1} m_1(r) \to \infty$ as $r\to \infty$, a contradiction.

Finally, notice that if $D\leq 0$, then we may simply replace $\sigma_1$ by a larger number so that $D> 0$, but $D$ is small enough that $\beta_1 > \alpha_1$. The hypothesis \EQ{fsysloc} weakens as $\sigma_1$ increases. We have proved the following theorem:

\begin{thm} \label{SYS}
Suppose that for each $i=1,\ldots,k$ the elliptic operator $Q_i[\cdot]$ satisfies the hypotheses (H1)-(H5), with constants $p_i$, $\alpha^*_i$ and $\widetilde\alpha^*_i$, and  that \EQ{noh4} holds. Let $\sigma_1,\ldots,\sigma_k > 0$ and $f_i$ satisfy \EQ{fsysloc}, $D$ be given by \EQ{D}, and $\beta_i$ be given by \EQ{betak}. Suppose that either $D\leq 0$, or
$
D > 0 $ and $\min_{1\leq i \leq k} (\alpha_i - \beta_i) \leq 0$.
Then the system \re{LEsys}
has no positive solution  in any exterior domain of $\R^n$.
\end{thm}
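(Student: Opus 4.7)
\medskip

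The plan is to follow the strategy already used for scalar equations in Theorem~\ref{FNL}, but iterated cyclically along the system. First, observe that the hypothesis \EQ{fsysloc} only becomes weaker if we replace a given $\sigma_i$ by a larger number; hence if $D\le 0$ we may increase $\sigma_1$ just enough so that $D>0$ and also $\beta_1>\alpha_1^*$ (a brief check of the formula \EQ{betak} shows $\beta_1$ depends continuously on $\sigma_1$ and tends to $+\infty$ as $D \searrow 0$). Thus it suffices to treat the case $D>0$ together with $\min_i(\alpha_i^* - \beta_i)\le 0$, and by a cyclic relabeling we may assume the minimum is attained at $i=1$.

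Arguing by contradiction, suppose $(u_1,\ldots,u_k)$ is a positive solution in $\rn \setminus B_{r_0}$. Set $u_{i,r}(x):=u_i(rx)$ and $m_i(r):=\inf_{B_{2r}\setminus B_r}u_i$. By (H5), $-Q_{i+1}[u_{i+1,r}] \ge r^p f_i(u_{i,r})$ on $\rn\setminus B_{1/2}$. The very weak Harnack inequality (H4) yields sets $A_{i,r}\subset B_2\setminus B_1$ of measure bounded below, on which $m_i(r)\le u_{i,r}\le \bar C m_i(r)$; then the quantitative strong maximum principle (H3) produces, exactly as in the derivation of \EQ{mrandf}, the chain of inequalities
\begin{equation*}
m_{i+1}(r)^{p-1} \ge c r^p \inf\{\,f_i(s) : m_i(r)\le s\le \bar C m_i(r)\,\} \quad\text{for } i=1,\ldots,k,
\end{equation*}
with $c$ independent of $i$ and $r$. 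Since $\widetilde\alpha_i^*>0$, Lemma~\ref{mbounds_1} gives $m_i(r)\le C$, so the left-hand side above is bounded; continuity and positivity of $f_i$ on $(0,\infty)$ then force $m_i(r)\to 0$ as $r\to\infty$ for every $i$.

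Once $m_i(r)$ is small, \EQ{fsysloc} provides $f_i(s)\ge c s^{\sigma_i}$ on the relevant range, so that for large $r$
\begin{equation*}
m_{i+1}(r) \ge c\, r^{p/(p-1)}\, m_i(r)^{\sigma_i/(p-1)}, \qquad i=1,\ldots,k.
\end{equation*}
Composing these $k$ inequalities cyclically (starting and ending at $i=1$) and collecting the powers of $r$ exactly as the sketch preceding \EQ{betak} indicates, we arrive at
\begin{equation*}
m_1(r)\ge c\, r^{A}\, m_1(r)^{B},\qquad A=\frac{p}{p-1}\sum_{i=0}^{k-2}\frac{\sigma_k\cdots\sigma_{k-i}}{(p-1)^{i+1}}+\frac{p}{p-1},\qquad B=\frac{\sigma_1\cdots\sigma_k}{(p-1)^k}.
\end{equation*}
Since $D>0$ we have $B>1$, and a direct (though tedious) algebraic manipulation of \EQ{betak} gives $A/(B-1)=\beta_1$. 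Rearranging produces the upper bound $m_1(r)\le C r^{-\beta_1}$, which combined with the fundamental solution lower bound $m_1(r)\ge c r^{-\alpha_1^*}$ from Lemma~\ref{mbounds_1} is already a contradiction when $\alpha_1^*<\beta_1$.

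The only remaining case, and the main obstacle, is the critical equality $\alpha_1^*=\beta_1$. Here the comparison between $m_1(r)$ and $r^{-\alpha_1^*}$ is no longer strict, so one must use a rescaling argument in the spirit of Theorem~\ref{FNL}. Since $r^A m_1(r)^B\ge c\, m_1(r)$ at criticality, the chain of lower bounds can be reversed to give matching upper bounds $m_{i+1}(r)\le C r^{p/(p-1)}m_i(r)^{\sigma_i/(p-1)}$, and in particular the two-sided estimate $c r^{-\alpha_1^*}\le m_1(r)\le C r^{-\alpha_1^*}$ (together with analogous bounds for each $m_i$). On the set $A_{k,r}$ one then has
\begin{equation*}
-Q_1[u_{1,r}] \ge r^p f_k(u_{k,r}) \ge c\, r^p m_k(r)^{\sigma_k} \ge c\, m_1(r)^{p-1} \ge c\, r^{-\alpha_1^*(p-1)},
\end{equation*}
so the rescaled function $v_{1,r}(x):=r^{\alpha_1^*}u_1(rx)$ satisfies $-Q_1[v_{1,r}]\ge c\,\chi_{A_{k,r}}$ on a ball. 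Comparing $v_{1,r}$ with $\rho_1(r/2)\,r^{\alpha_1^*}\Phi_1(r\cdot)$ (using that $r\mapsto \rho_1(r)=\inf u_1/\Phi_1$ is nondecreasing by Lemma~\ref{mbounds}) and invoking (H3) once more, we obtain $\rho_1(r)\ge \rho_1(r/2)+c_0$ with $c_0>0$ independent of $r$. Iterating gives $\rho_1(r)\to\infty$, which is incompatible with the upper bound $m_1(r)\le C r^{-\alpha_1^*}$. This contradiction completes the proof.
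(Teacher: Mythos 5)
Your proposal is correct and follows essentially the same route as the paper: the same reduction of the case $D\le 0$ by enlarging $\sigma_1$, the same iterated use of (H3)--(H5) and Lemma~\ref{mbounds_1} to force $m_i(r)\to 0$ and derive $m_1(r)\ge c\,r^A m_1(r)^B$ with $A/(B-1)=\beta_1$, and the same treatment of the critical case $\alpha_1^*=\beta_1$ by reversing the chain and rescaling $v_{1,r}=r^{\alpha_1^*}u_{1,r}$ to show $\rho_1(r)\to\infty$ as in Theorem~\ref{FNL}. The only difference is that you spell out the final comparison with $\rho_1(r/2)\,\Phi_1$ explicitly, which the paper leaves as a reference to the proof of Theorem~\ref{FNL}.
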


\bibliographystyle{plain}
\bibliography{bigliouv1}

\end{document}